\renewenvironment{abstract}
  {\quotation
  {\bfseries\noindent{\abstractname:}}}
  {\endquotation}
\theoremstyle{definition} 
\newtheorem{theorem}{Theorem}[section] 
\newtheorem{definition}[theorem]{Definition}
\newtheorem{algorithm}[theorem]{Algorithm}
\newtheorem{lemma}[theorem]{Lemma}
\newtheorem{proposition}[theorem]{Proposition}
\newtheorem{corollary}[theorem]{Corollary}
\newtheorem{assumption}[theorem]{Assumption}
\newtheorem{rmk_temp}[theorem]{Remark}
\numberwithin{equation}{section} 
\newenvironment{remark}
  {\pushQED{\qed}\begin{rmk_temp}}
  {\popQED\end{rmk_temp}}
\newcommand{\N}{\mathbb{N}}
\newcommand{\Z}{\mathbb{Z}}
\newcommand{\R}{\mathbb{R}}
\newcommand{\C}{\mathbb{C}}
\newcommand{\A}{\mathbb{A}}
\newcommand{\simgrad}{\sym\nabla}
\newcommand{\eps}{{\varepsilon}}
\DeclareMathOperator{\sym}{sym}
\newcommand{\vect}[1]{\boldsymbol #1}
\DeclareMathOperator{\supp}{supp}
\DeclareMathOperator{\range}{range}
\DeclareMathOperator{\eigenspace}{Eig}
\let\oldsqrt\sqrt
\def\sqrt{\mathpalette\DHLhksqrt}
\def\DHLhksqrt#1#2{%
\setbox0=\hbox{$#1\oldsqrt{#2\,}$}\dimen0=\ht0
\advance\dimen0-0.2\ht0
\setbox2=\hbox{\vrule height\ht0 depth -\dimen0}%
{\box0\lower0.4pt\box2}}
\title{\LARGE\MakeUppercase{\textbf{Higher-order Homogenization for Equations of Linearized Elasticity using the Operator-Asymptotic Approach}}}
\def\correspondingauthor{\footnote{Corresponding author:
josip.zubrinic@fer.hr}}
\author[1]{Yi-Sheng Lim}
\author[2]{Josip Žubrinić\correspondingauthor{}}
\affil[1]{Department of Mathematics, Texas A\&M University, College Station, TX 77843, \newline United States (Email: yishenglimysl@tamu.edu)}
\affil[2]{Faculty of Electrical Engineering and Computing, University of Zagreb, Unska 3, 10000 Zagreb, Croatia (Email: josip.zubrinic@fer.hr)}
\date{}
\begin{document}

\maketitle

\vspace{-0.8cm}

\begin{abstract}
    The operator-asymptotic approach was introduced by Lim-Žubrinić in \cite{simplified_method} for the homogenization of an $\varepsilon\mathbb{Z}^d$-periodic composite media. In this article, we consider the setting of three-dimensional linearized elasticity, and extend the approach to obtain higher-order convergence rates. In particular, we consider the so-called ``Bloch approximation" for vector-valued functions with compact Fourier support, and demonstrate that under such data, the approach provides an expansion that yields an error of order $\varepsilon^{n+1}$ in $L^2$ and $\varepsilon^n$ in $H^1$, for any $n$.

    \vskip 0.5cm
    
    {\bf Keywords} Homogenization $\cdot$ Resolvent asymptotics $\cdot$ Linearized elasticity $\cdot$ Bloch waves
    \vskip 0.5cm

    {\bf Mathematics Subject Classification (2020):}
    35P15, 35C20, 74B05, 74Q05.
\end{abstract}


\onehalfspacing
\section{Introduction}\label{sect:introduction}
In this article, we consider the equations of linearized elasticity on $\R^3$:
\begin{align}\label{eqn:intro_main_equation}
    (\simgrad)^\ast \left( \A\left(\tfrac{\cdot}{\eps}\right) \simgrad \vect u_\eps \right) + \vect u_\eps = \vect f, \quad \vect f \in L^2(\R^3;\C^3), \quad \eps>0.
\end{align}
The unknown $\vect u_\eps$ denotes the displacement of the medium from some reference position, $\simgrad \vect u = \frac{1}{2} ( \nabla \vect u + \nabla \vect u^\top ) $ denotes the linearized strain tensor,  $(\simgrad)^\ast$ is the $L^2-$adjoint of $\simgrad$, and $\A$ is a $\Z^3-$periodic fourth order tensor-valued function describing the properties of the medium. We shall impose further assumptions on $\A$ (Assumption \ref{coffassumption}) so that \eqref{eqn:intro_main_equation} is well-posed. The rescaling to $\A_\eps(x) := \A\left(\tfrac{x}{\eps}\right)$ in \eqref{eqn:intro_main_equation} thus describes an $\eps\Z^3-$periodic medium, where $\eps>0$ measures the heterogeneity of the medium.

We are interested in the \textit{higher-order} homogenization problem for \eqref{eqn:intro_main_equation}. That is, to capture the behaviour of $\vect u_\eps$ as $\eps\downarrow 0$, to $\mathcal{O}(\eps^n)$ accuracy for any $n$. It is well-known (e.g.\,we mention the books \cite{cioranescu_donato,zhikov,bensoussan_lions_papanicolaou,Oleinik2012MathematicalPI}) that the leading-order behaviour is captured by $\vect u_{\hom}$, which is the unique solution to
\begin{align}\label{eqn:intro_homo_equation}
    (\simgrad)^\ast \left( \A^{\hom} \simgrad \vect u_{\hom} \right) + \vect u_{\hom} = \vect f,
\end{align}
where $\A^{\hom}$ is some tensor that is constant in space. In fact, $\vect u_\eps$ converges to $\vect u_{\hom}$ in $L^2$ at an $\mathcal{O}(\eps)$ rate. Thus, the object's displacement in the highly-heterogeneous medium \eqref{eqn:intro_main_equation} is approximated by its displacement in a homogeneous one \eqref{eqn:intro_homo_equation}. We mention a few ways that the convergence $\vect u_\eps \rightarrow \vect u_{\hom}$ can be shown (which may not have an error rate): the two-scale asymptotic expansion method \cite[Chapter II]{Oleinik2012MathematicalPI}-\cite[Chapter 3.6]{shen_book}, the oscillating test functions method \cite[Chapter 10]{cioranescu_donato}, two-scale convergence \cite[Chapter 9]{cioranescu_donato}, and G-convergence \cite[Chapter 12]{zhikov}. This list is non-exhaustive, and growing at the point of writing.

Recently, an ``operator-asymptotic" approach was introduced\footnote{First appeared in its rough form by Cherednichenko and Vel\v{c}i\'c for the study of periodic thin elastic plates \cite{kirill_igor_plates}.} by the authors in \cite{simplified_method}, where it was shown in particular that:
\begin{theorem}
    [Special case of \texorpdfstring{\cite[Theorem 2.15]{simplified_method}}{Theorem 2.15}]
    \label{thm:simplified_method}
    There exist $C>0$, independent of $\eps>0$ such that
    \begin{align}
        &\left\| \left( \mathcal{A}_\eps + I \right)^{-1} - \left( \mathcal{A}^{\hom} + I \right)^{-1} \right\|_{L^2(\R^3;\R^3) \rightarrow L^2(\R^3;\R^3)} \leq C \eps, \label{eqn:simplified_mthd_l2l2}\\
        &\left\| \left( \mathcal{A}_\eps + I \right)^{-1} - \left( \mathcal{A}^{\hom} + I \right)^{-1} - \mathcal{R}_{1,\eps} \right\|_{L^2(\R^3;\R^3) \rightarrow H^1(\R^3;\R^3)} \leq C \eps, \label{eqn:simplified_mthd_l2h1}\\
        &\left\| \left( \mathcal{A}_\eps + I \right)^{-1} - \left( \mathcal{A}^{\hom} + I \right)^{-1} - \mathcal{R}_{1,\eps} - \mathcal{R}_{2,\eps} \right\|_{L^2(\R^3;\R^3) \rightarrow L^2(\R^3;\R^3)} \leq C \eps^2, \label{eqn:simplified_mthd_l2l2higher}
    \end{align}
    where $\mathcal{R}_{1,\eps}$ and $\mathcal{R}_{2,\eps}$ are corrector operators defined through the procedure.
\end{theorem}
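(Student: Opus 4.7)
The plan is to build an approximation $\vect v_\eps := \vect u_{\hom} + \mathcal{R}_{1,\eps}\vect f + \mathcal{R}_{2,\eps}\vect f$ via a formal two-scale expansion, and then bound the discrepancy $\vect u_\eps - \vect v_\eps$ in operator norm. Writing $\vect u_0 := \vect u_{\hom} = (\mathcal{A}^{\hom}+I)^{-1}\vect f$, I would look for
\begin{equation*}
    \vect u_\eps(x) \sim \vect u_0(x) + \eps\,\vect u_1(x, x/\eps) + \eps^2\,\vect u_2(x, x/\eps),
\end{equation*}
with $\vect u_j(x,\cdot)$ being $\Z^3$-periodic. Substituting into \eqref{eqn:intro_main_equation} and collecting powers of $\eps$ via $\nabla = \nabla_x + \eps^{-1}\nabla_y$ produces, at order $\eps^{-1}$, the first-level cell problem, whose tensor-valued solution $\vect N^{(1)}$ gives $\vect u_1(x,y) = \vect N^{(1)}(y)\!:\!\simgrad \vect u_0(x)$. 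The cell-average of the order-$\eps^0$ equation then identifies the homogenised operator $\mathcal{A}^{\hom}$, while its oscillating part defines the second cell problem whose solution $\vect N^{(2)}$ yields $\vect u_2(x,y) = \vect N^{(2)}(y) \!:\! \nabla \simgrad \vect u_0(x)$. I would then set $\mathcal{R}_{j,\eps}\vect f(x) := \eps^j \vect u_j(x, x/\eps)$.

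With $\vect v_\eps$ in hand, the core of the argument is to control the residual $\vect r_\eps := (\mathcal{A}_\eps + I)\vect v_\eps - \vect f$. By design, the cell problems ensure that the leading terms in $\vect r_\eps$ cancel, so that $\vect r_\eps$ can be written as the distributional divergence (in $x$) of a flux of order $\eps$, plus a remainder of order $\eps^2$ in $L^2$. The coercivity of $\mathcal{A}_\eps + I$ supplied by Assumption \ref{coffassumption}, together with a whole-space Korn inequality, then yields
\begin{equation*}
    \|\vect u_\eps - \vect v_\eps\|_{H^1(\R^3;\C^3)}^2 \lesssim \bigl| \langle \vect r_\eps,\, \vect u_\eps - \vect v_\eps \rangle \bigr|.
\end{equation*}
Integration by parts against the flux produces the factor of $\eps$ needed for \eqref{eqn:simplified_mthd_l2l2} and \eqref{eqn:simplified_mthd_l2h1}; note that \eqref{eqn:simplified_mthd_l2l2} follows by combining the $H^1$-error on $\vect u_\eps-\vect v_\eps$ with the observation that $\vect v_\eps - \vect u_{\hom}$ is itself $O(\eps)$ in $L^2$. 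For the sharper $L^2$-to-$L^2$ bound \eqref{eqn:simplified_mthd_l2l2higher}, I would invoke a duality argument: pair $\vect r_\eps$ with $(\mathcal{A}_\eps + I)^{-1}\vect g$ for $\vect g \in L^2(\R^3;\C^3)$, and apply the already-established first-order corrector expansion to that auxiliary solution in order to extract one additional power of $\eps$.

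The main obstacle is regularity bookkeeping. Defining $\vect u_2$ requires second derivatives of $\vect u_0$, which for $\vect f \in L^2$ is supplied by the elliptic regularity estimate $\| \vect u_0 \|_{H^2} \lesssim \|\vect f\|_{L^2}$ for the constant-coefficient system \eqref{eqn:intro_homo_equation}; the $\eps^2$ accuracy thus uses exactly the regularity that is available for free, with no extra hypothesis on $\vect f$. The oscillating multipliers $\vect N^{(j)}(x/\eps)$ are bounded in $L^\infty$ by periodicity and standard regularity for the cell problems, and the $\eps^{-1}$ loss from differentiating in $x$ is absorbed by the prefactor $\eps^j$. The genuinely delicate part, where the operator-asymptotic perspective pays off, is deciding in each residual term whether it should be absorbed into a cell problem (and therefore into $\vect v_\eps$) or transferred into $\vect r_\eps$ as the divergence of a flux, because this choice determines the exact power of $\eps$ gained in each of \eqref{eqn:simplified_mthd_l2l2}--\eqref{eqn:simplified_mthd_l2l2higher}, and it is this systematic bookkeeping that the procedure in \cite{simplified_method} formalises.
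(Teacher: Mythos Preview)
Your proposal follows the classical two-scale expansion in physical space, which is a fundamentally different route from the operator-asymptotic method of \cite{simplified_method} that this theorem advertises. That method is spectral: one applies the rescaled Gelfand transform $\mathcal{G}_\eps$ to reduce $\mathcal{A}_\eps$ to the family $\{\eps^{-2}\mathcal{A}_\chi\}_{\chi\in Y'}$ on $L^2(Y;\C^3)$, expands the fibrewise resolvent $(|\chi|^{-2}\mathcal{A}_\chi - zI)^{-1}$ in powers of $|\chi|$ via the iterative procedure of Section~\ref{sect:fibrewise_asmptotics}, converts the $|\chi|$-scaling to $\eps$-scaling through a Cauchy integral over the contour of Lemma~\ref{lem:contour_existence}, and finally integrates over $\chi$. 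The correctors $\mathcal{R}_{j,\eps}$ are defined through this spectral construction (Definitions~\ref{defn:corrector_operators_chi}--\ref{defn:corrector_operators_fullspace}), not as multiplication by periodic cell solutions composed with $x/\eps$.

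Beyond the difference in route, your argument as written has two gaps that block \emph{operator-norm} estimates. First, the assertion that the cell correctors $\vect N^{(j)}$ are bounded in $L^\infty$ ``by standard regularity'' is unjustified here: linearized elasticity is a \emph{system}, so De~Giorgi--Nash--Moser is unavailable, and under Assumption~\ref{coffassumption} (merely $\A\in L^\infty$) the correctors are only guaranteed to lie in $H^1_\#$. Without $L^\infty$ control, products like $\vect N^{(1)}(x/\eps)\!:\!\simgrad\vect u_0(x)$ are not bounded in $L^2$ by $\|\vect f\|_{L^2}$ alone. Second, even granting $L^\infty$ correctors, expanding $(\mathcal{A}_\eps+I)\vect v_\eps$ with the $\eps^2\vect u_2$ term produces contributions involving $\nabla^3\vect u_0$, controlled by $\|\vect f\|_{H^1}$ rather than $\|\vect f\|_{L^2}$; this yields only a strong-resolvent estimate, exactly the distinction drawn after~\eqref{eqn:simplified_mthd_l2l2_fcts}. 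Non-spectral routes to norm-resolvent bounds do exist (e.g.\ Steklov smoothing as in Zhikov--Pastukhova), but they require machinery you have not invoked. The spectral method sidesteps both issues by working fibre-by-fibre: the expansion terms $\vect u_j^{(k)}$ of Section~\ref{sect:algebraic_step} are estimated directly in $H^1(Y;\C^3)$ via Korn's inequality (Proposition~\ref{prop:fibrewise_terms_size}), with no call on pointwise bounds for correctors or on extra regularity of $\vect f$.
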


We have used here: $\mathcal{A}_\eps = (\simgrad)^\ast \A_\eps (\simgrad)$ and $\mathcal{A}^{\hom} = (\simgrad)^\ast \A^{\hom} (\simgrad)$. Note for instance, that \eqref{eqn:simplified_mthd_l2l2} may be written in as
\begin{align}\label{eqn:simplified_mthd_l2l2_fcts}
    \| \vect u_\eps - \vect u_{\hom} \|_{L^2} \leq C \eps \| \vect f \|_{L^2},
    \quad \text{where $C>0$ is independent of $\eps$ and $\vect f$.}
\end{align}
For this reason, \eqref{eqn:simplified_mthd_l2l2}-\eqref{eqn:simplified_mthd_l2l2higher} are typically referred to as ``norm-resolvent estimates" or ``operator/uniform estimates". Norm-resolvent estimates, as opposed to ``strong(-resolvent) estimates" $\| \vect u_\eps - \vect u_{\hom} \|_{L^2} \leq C(\vect f) \eps$, are desirable as it imply convergence of spectra $\sigma(\mathcal{A}_\eps) \rightarrow \sigma(\mathcal{A}^{\hom})$ (in the Hausdorff sense, on compact sets. See \cite[Sect.\,1.3.3]{ys_thesis}), and also $\| g(\mathcal{A}_\eps) - g(\mathcal{A}^{\hom}) \|_{L^2\rightarrow L^2} \rightarrow 0$ for $g$ continuous on $\R$ and vanishing at infinity \cite[Theorem VIII.20]{reed_simon4}.


\subsection{The operator-asymptotic method in comparison to existing spectral methods}

In the language of \cite{zhikov_pastukhova_2016_opsurvey}, the operator-asymptotic approach belongs to the class of ``spectral methods", where a (rescaled) Floquet-Bloch-Gelfand transform $\mathcal{G}_\eps$ \eqref{eqn:gelfand_transform} is used to decompose the $\eps\Z^3-$periodic operator $\mathcal{A}_\eps$ into a family of operators $\{ \frac{1}{\eps^2} \mathcal{A}_\chi \}_{\chi \in [-\pi,\pi)^3}$ on $L^2([0,1]^3;\C^3)$. The underlying theme among such methods is a focus on the behaviour of the spectra $\sigma(\mathcal{A}_\eps)$ near the bottom, and this corresponds to the bottom of the spectra $\sigma(\mathcal{A}_\chi)$, for small $|\chi|$'s. This is used to construct $\mathcal{A}^{\hom}$, and the specifics of the construction and its error estimates depends on the method. Notable spectral methods include the ``spectral germ" approach by Birman and Suslina \cite{birman_suslina_2004} and a ``Bloch approach" by Conca-Vanninathan \cite{conca_vanninathan1997}. We refer the reader to \cite[Sections 1.2-1.4]{simplified_method} for a detailed comparison of existing spectral methods.

While the result itself (with norm-resolvent estimates) is not new (see e.g.\,\cite{birman_suslina_2004, birman_suslina_2007_l2h1, birman_suslina_2006_l2l2higherorder}), Theorem \ref{thm:simplified_method} serves as a proof of concept for the operator-asymptotic approach to elliptic \textit{systems} of PDEs on $\R^d$. The fundamental difference in moving from a scalar problem to a system of $n$ PDEs, is the loss of simplicity of the lowest eigenvalue of $\mathcal{A}_{\chi=0}$ (Section \ref{sect:recap_spectral_analysis}). Since parameter $\chi$ is $d-$dimensional, this prohibits a direct use of analytic perturbation theory \cite{kato_book}. The spectral germ \cite{birman_suslina_2004} and Bloch approaches \cite{ganesh_vanninathan_2005}\footnote{Note that the main result \cite[Theorem 5.1]{ganesh_vanninathan_2005} is statement of weak $H^1$ convergence, and not of the form \eqref{eqn:simplified_mthd_l2l2}.} deal with this by introducing polar coordinates $\chi = t\theta$, $t \geq 0$ and $|\theta| = 1$, and perturbing the one-dimensional parameter $t$. In addition, one has to ensure that the estimates are uniform in $\theta$.
On the other hand, the operator-asymptotic approach differs from existing spectral approaches as it works with the \textit{sum} $P_\chi^1 + \cdots + P_\chi^n$ of the spectral projections, instead of the individual $P_\chi^i$'s. By studying the lowest $n$ eigenspaces as a collective, one is able to avoid the use of analytic perturbation theory.

\subsection{Higher-order homogenization and the need for well-prepared data}

The term ``higher-order homogenization" refers to an extension of \eqref{eqn:simplified_mthd_l2l2}-\eqref{eqn:simplified_mthd_l2l2higher} to $\mathcal{O}(\eps^n)$ error for any $n \in \N$, by adding higher-order terms. Among the approaches available, the two-scale expansion has been the most successful at achieving this for a wide class of PDEs. Indeed, details can be found in the classical monographs \cite[Chapter 4]{bakhvalov_panasenko} and \cite[Chapter II]{Oleinik2012MathematicalPI}. It should be noted however, that the error estimates therein are given in the ``strong" form, where the constant $C$ now depends on $\vect f$ (and also $n$).

The fact that we obtain only strong (as opposed to norm) resolvent estimates, is not an artefact of the proofs \cite{bakhvalov_panasenko, Oleinik2012MathematicalPI}, but rather an inherent limitation of the homogenized approximation $(\mathcal{A}^{\hom} + I)^{-1}$. More precisely, it is now well-understood (see e.g.\,\cite{birman_suslina_2004, birman_suslina_2007_l2h1}) that the $L^2\rightarrow L^2$ $\mathcal{O}(\eps^2)$ error \eqref{eqn:simplified_mthd_l2l2higher} and the $L^2\rightarrow H^1$ $\mathcal{O}(\eps)$ error \eqref{eqn:simplified_mthd_l2h1} cannot be improved by adding higher-order terms. For higher-order homogenization, it is thus necessary to impose conditions on the data $\vect f \in L^2$.

By inspecting the two-scale expansion \cite{bakhvalov_panasenko, Oleinik2012MathematicalPI}, one naturally arrives at the condition $\vect f \in H^\infty$. This smoothness conditions plays two roles: First, it ensures that for each $n$, the terms in the $n-$th order expansion are well-defined. Second, it provides the desired $\mathcal{O}(\eps^n)$ estimate. Interestingly, we note that by further imposing conditions on the \textit{growth} of the $H^k$ norm of $\vect f$ in $k$ (the so-called ``Gevrey regular" functions), Kamotski, Matthies, and Smyshlyaev demonstrated \cite{kamotski_matthies_smyshlyaev_exphomo_2007} that an \textit{exponential} error may be attained, by truncating within a range of optimal $n$'s.

On the other hand, a separate set of conditions are adopted by Lamacz-Keymling and Yosept \cite{lamacz_yousept_2021} for higher-order homogenization of scalar problems, under the Bloch approach. In particular, they restricted themselves to the so-called ``Bloch approximation/adapted" functions $\vect f_\eps$ of some $\vect f \in L^2$ whose Fourier transform is compactly supported. Since the operator-asymptotic approach is spectral method, we shall impose a condition that is similar to this, adapted to vector-valued functions (Section \ref{sect:well_prepared_data}).

On a separate note, we remark that higher-order homogenization of \eqref{eqn:intro_main_equation} also serves as an intermediate step towards understanding the homogenization of the corresponding wave equation for times beyond $\mathcal{O}(\eps^{-1})$, under the operator-asymptotic method. For further details, we refer the reader to \cite{lamacz_1D, allaire_lamacz_rauch_2022_crime_pays, allaire_briane_vanninathan_2016, benoit_gloria_2019_ballistic_transport}, and a recent survey \cite{kirill_ys_wave_survey} by Cherednichenko and the first author on the homogenization of the wave equation.

\subsection{Goal of the article}

This article aims to further develop the operator-asymptotic approach, by continuing expansion $(\mathcal{A}^{\hom} + I)^{-1} + \mathcal{R}_{1,\eps} + \mathcal{R}_{2,\eps}$ so that the errors in Theorem \ref{thm:simplified_method} can be upgraded to order $\mathcal{O}(\eps^n)$, for any $n \in \N$. As mentioned above, it is necessary to restrict the data $\vect f$ to a subset of $L^2$. We therefore introduce and motivate the so-called Bloch-approximation $\widehat{\Xi}_{\eps,\mu} \vect f$ of a vector-valued function $\vect f \in L^2$ (Definition \ref{defn:bloch_approximation}), and we shall further assume that $\vect f$ has a compactly supported Fourier transform.
Under this class of functions, we demonstrate that the operator-asymptotic approach produces an (operator) expansion that is close to $(\mathcal{A}_\eps+I)^{-1}$ in the $L^2$ and $H^1$ sense with $\mathcal{O}(\eps^n)$ error, for any $n$ (Theorem \ref{thm:main_result_intro}).
%
Among existing spectral methods, this is the first higher-order homogenization result for vector-valued problems.

\subsection{Structure of the article}

This article is structured as follows: Section \ref{sect:prelims}, we introduce some basic notation, define the (rescaled) Gelfand transform $\mathcal{G}_\eps$ (Section \ref{sect:notation}), state the main assumption (Assumption \ref{coffassumption}), and define the key operator of interest $\mathcal{A}_\eps$ (Definition \ref{defn:main_operator_fullspace}) and $\mathcal{A}_\chi, \chi \in [-\pi,\pi)^3$ (Definition \ref{defn:main_operator_fiberspace}). The family $\{ \frac{1}{\eps^2} \mathcal{A}_\chi \}_{\chi \in [-\pi,\pi)^3}$ are the fibers of $\mathcal{A}_\eps$ under $\mathcal{G}_\eps$ (Proposition \ref{prop:pass_to_unitcell}).
The main results of this paper can be found in Section \ref{sect:main_results}.

Section \ref{sect:recap_spectral_analysis} begins with a brief review of the spectral analysis of the operators $\mathcal{A}_\chi$ on $L^2([0,1]^3;\C^3)$. In Section \ref{sect:well_prepared_data}, we introduce and motivate the notion of a ``Bloch approximation" $\widehat{\Xi}_{\eps,\mu}\vect f$ of a given $\vect f \in L^2$.

Section \ref{sect:fibrewise_asmptotics} details the asymptotic procedure for $(\frac{1}{|\chi|^2} \mathcal{A}_\chi - zI)^{-1}$. We extend \cite{simplified_method} by carrying out the procedure for $n$ cycles (iterations), for any $n$. Next, we show a structural result, namely that the terms $\vect u_j^{(k)}$ defined through the procedure solve certain well-posed problems, and the error term $\vect u_\text{error}^{(n)}$ satisfies a given variational problem (Section \ref{sect:algebraic_step}). Section \ref{sect:fibrewise_estimates_working} contains the analytic step, where the difference between $(\frac{1}{|\chi|^2} \mathcal{A}_\chi - zI)^{-1}$ and the expansion is estimated. The result is summarized in Section \ref{sect:fiberwise_results}.

Section \ref{sect:fullspace_estimates} brings the fiberwise (for each $\chi$) estimates of Section \ref{sect:fibrewise_asmptotics} back to $L^2(\R^3;\C^3)$. We prove the main result (Theorem \ref{thm:main_result_intro}) in two parts, Theorem \ref{thm:l2tol2} (Section \ref{sect:proof_l2tol2}), and Theorem \ref{thm:l2toh1} (Section \ref{sect:proof_l2toh1}).

\section{Preliminaries, problem setup, and main results}\label{sect:prelims}

\subsection{Notation}\label{sect:notation}

\paragraph*{Basic notation.} Let $d \in \N \in \{ 1, 2, \cdots \}$ and $\N_0 = \N \cup \{ 0 \}$. For $\vect a, \vect b \in \C^d$, write $\langle \vect a, \vect b \rangle_{\C^d} = \vect a \cdot \overline{\vect b}$ for the inner product, with corresponding norm $|\vect a|$. Write $\Re{(\vect a)}$ and $\Im{(\vect a)}$ for the real and imaginary part of $\vect a$ respectively, taken component-wise, and $\vect a \otimes \vect b = \vect a \overline{\vect b}^\top$. For matrices $\vect A = (A_j^i), \vect B = (B_j^i) \in \C^{d\times d}$, $A_j^i$ refers to the $i$'th row and $j$'th column of $\vect A$. $\R_\text{sym}^{d\times d}$ stands for the space of real $d\times d$ symmetric matrices (similarly for $\C_\text{sym}^{d\times d}$). The symmetric part of $\vect A$ is $\sym \vect A = \tfrac{1}{2}(\vect A + \vect A^\top)$. The Frobenius inner product on $\C^{d\times d}$ is given by $\vect A : \vect B = \sum_{i,j=1}^d A_j^i \overline{B_j^i}$, and the corresponding norm is denoted by $|\vect A|$. For a fourth-order tensor $\A = (\A_{jl}^{ik}) \in \C^{d\times d\times d\times d}$, it shall be viewed as a mapping from $\C^{d\times d}$ to itself, with $\A \vect B = ((\A \vect B)_j^i) \in \C^{d\times d}$ given by $(\A \vect B)_j^i = \sum_{k,l=1}^d \A_{jl}^{ik} \vect B_l^k$.

\paragraph*{Vector-valued function spaces.} For $\Omega \subset \R^3$ open, with $\partial \Omega$ Lipschitz, we need $C^\infty(\Omega;\C^3)$, $L^p(\Omega;\C^3)$, and the Sobolev spaces $W^{k,p}(\Omega;\C^3)$, for $p \in [1,\infty]$ and $k \in \N_0$. We shall simply write, say $L^2$, when $\Omega$ and $\C^3$ is understood. $H^k = W^{k,2}$. 

\paragraph*{Periodic function and function spaces.} Let $Y = [0,1)^3$ be the unit cell, and $Y' = [-\pi,\pi)^3$ be the corresponding dual cell. A vector $\chi \in Y'$ shall be referred to as the quasimomentum. Let $\{e_1,e_2,e_3 \}$ be the standard basis of $\R^3$, and $\vect u:\R^3 \rightarrow \C^3$ be measurable. Then we say that $\vect u$ is a $(\Z^3-)$periodic function if
\begin{align}
    \vect u (x + ke_j) = \vect u(x), \qquad \text{a.e. } x \in \R^3, \text{ and all } k \in \Z.
\end{align}
The collection of smooth periodic functions is given by
\begin{align}
    C_{\#}^\infty(Y;\C^3) := \{ \vect u \,:\, \R^3 \rightarrow \C^3 \,|\, \vect u \text{ smooth and periodic} \},
\end{align}
and $\vect u \in C_{\#}^\infty$ is identified with its restriction to $\overline{Y}$. The periodic Sobolev space is given by
\begin{align}
    H_{\#}^k(Y;\C^3) := \overline{C_{\#}^\infty (Y;\C^3) }^{\| \cdot \|_{H^k}},
\end{align}
which also identified as a subspace of $L^2(Y;\C^3)$. $\dot{H}_{\#}^1(Y;\C^3)$ denotes the set of mean zero $H_{\#}^1$ functions.

\paragraph*{Operators.} The Fourier transform $\mathcal{F}$, viewed as a unitary operator from $L^2(\R^3;\C^3)$ to $L^2(\R^3;\C^3)$ is given by $\mathcal{F}(\vect f)(\theta) = \int_{\R^3} \vect f(y) e^{-2\pi i \theta \cdot y} \,dy$. We shall use $\mathcal{F}^{-1}$ for the inverse Fourier transform.

The (rescaled) Gelfand transform $\mathcal{G}_\eps$, for $\eps>0$, is given by
\begin{align}
    &\mathcal{G}_\varepsilon : L^2(\mathbb{R}^3;\mathbb{C}^3) \to L^2(Y';L^2(Y; \mathbb{C}^3)) = \int_{Y'}^\oplus L^2(Y; \mathbb{C}^3)d\chi \cong L^2(Y\times Y';\C^3), \\
    &(\mathcal{G}_\varepsilon \vect u)(y,\chi):= \left(\frac{\varepsilon}{2\pi}\right)^{3/2} \sum_{n\in \mathbb{Z}^3}e^{-i\chi \cdot (y+n)}\vect u(\varepsilon(y+n)), \quad y \in  Y, \quad \chi\in Y'. \label{eqn:gelfand_transform}
\end{align}
This is a unitary operator, with inversion formula
\begin{equation}\label{eqn:gelfand_inversion}
   \vect u(x) = (\mathcal{G}_\eps^\ast \mathcal{G}_\eps \vect u)(x) = \frac{1}{\left( 2\pi \varepsilon\right)^{3/2}} \int_{Y'} e^{i\chi\cdot x} (\mathcal{G}_\varepsilon \vect u)\left( \frac{x}{\eps}, \chi \right) d\chi, \quad x\in\mathbb{R}^3,
\end{equation}
where $\mathcal{G}_\eps\vect u(y,\chi)$ is extended in the $y$ variable by $\Z^3$-periodicity to the whole of $\R^3$. 

For each $\chi \in Y'$, we shall need the operator
\begin{align}
    &X_{\chi}: L^2( Y; \mathbb{C}^3) \rightarrow L^2(Y;\mathbb{C}^{3\times 3}) \\
    &X_{\chi}\vect u = \sym \left(\vect u \otimes \chi \right) = \sym \left(\vect u \chi^\top \right) \nonumber\\
    &\qquad = \begin{bmatrix}
    \chi_1 u_1 & \frac{1}{2}(\chi_1 u_2 + \chi_2 u_1) & \frac{1}{2}(\chi_1 u_3 + \chi_3 u_1)  \\
    \frac{1}{2}(\chi_1 u_2 + \chi_2 u_1) & \chi_2 u_2 & \frac{1}{2}(\chi_3 u_2 + \chi_2 u_3) \\
    \frac{1}{2}(\chi_3 u_1 + \chi_1 u_3) & \frac{1}{2}(\chi_3 u_2 + \chi_2 u_3) & \chi_3 u_3
    \end{bmatrix}.
\end{align}
By Lemma \ref{lem:symrk1}, $X_\chi$ has the property
\begin{equation}\label{eqn:Xchi_est}
    \frac{1}{2} |\chi| \|\vect u \|_{L^2(Y;\mathbb{C}^3)} 
    \leq \|X_{\chi}\vect u \|_{L^2(Y;\mathbb{C}^{3 \times 3})} 
    \leq |\chi|\|\vect u\|_{L^2(Y;\mathbb{C}^3)}, \qquad \text{for all } \vect u\in L^2(Y;\mathbb{C}^d).
\end{equation}

We conclude the section with a key identity relating $X_\chi$, $\mathcal{G}_\eps$, and (symmetric) gradients:
\begin{equation} \label{eqn:gelfand_symgrad_formula}
    \mathcal{G}_\varepsilon\left( \simgrad \vect u\right)(y,\chi) = \frac{1}{\varepsilon} \left(\simgrad_y\left(\mathcal{G}_\varepsilon \vect u\right) + i \sym \left(\left( \mathcal{G}_\varepsilon \vect u\right)\chi^\top  \right)\right)    = \frac{1}{\varepsilon} \left(\simgrad_y\left(\mathcal{G}_\varepsilon \vect u\right) + i X_\chi\left( \mathcal{G}_\varepsilon \vect u\right) \right).
\end{equation}

\subsection{Definition of key operators under study}
We shall now define the key operator of interest $\mathcal{A}_\eps$, and its counterpart $\mathcal{A}_\chi$ in the fiber (for each $\chi$) space $L^2(Y;\C^3)$. We begin with $\mathcal{A}_\eps$. Assume that the coefficient tensor $\mathbb{A}(y)$ satisfies:

\begin{assumption}\label{coffassumption}
    We impose the following assumptions on tensor of material coefficients $\A$:
    \begin{itemize}
        \item $\A:\R^3 \rightarrow \R^{3\times 3 \times 3 \times 3}$ is $\Z^3-$periodic.
    
        \item $\A$ is uniformly (in $y$) positive definite on symmetric matrices: There exists $\nu>0$ such that
        \begin{equation}\label{assump:elliptic}
            \nu|\vect \xi|^2 \leq \A(y) \vect \xi : \vect \xi \leq \frac{1}{\nu}|\vect \xi|^2, \quad \forall \, \vect \xi \in \R^{3\times 3}_\text{sym}, \quad \forall y \in Y. 
        \end{equation}
        
        \item The tensor $\A$ satisfies the following material symmetries:
        \begin{equation}\label{assump:symmetric}
            \A_{jl}^{ik} = \A_{il}^{jk} = \A_{lj}^{ki}, \quad  i,j,k,l\in\{1,2,3\}.
        \end{equation}
        \item The coefficients of $\A$ satisfy $\A_{jl}^{ik} \in L^{\infty}(Y)$, where $i,j,k,l\in\{1,2,3\}$.
    \end{itemize}
\end{assumption}

We shall write $\A_\eps = \A(\frac{\cdot}{\eps})$. Then
\begin{definition}\label{defn:main_operator_fullspace}
    The operator $\mathcal{A}_\eps \equiv (\simgrad)^* \mathbb{A}_\eps \left( \simgrad \right) $ is the operator on $L^2(\R^3;\C^3)$ defined through its corresponding sesquilinear form $a_\eps$ with form domain $H^1(\R^3;\C^3)$ and action
    \begin{align}\label{eqn:ahom_complexHS}
        a_\eps (\vect u, \vect v) = \int_{\R^3} \mathbb{A} \left( \frac{x}{\eps} \right) \simgrad \vect u(x) : \overline{\simgrad \vect v (x)} ~ dx, \quad \vect u,\vect v \in \mathcal{D}(a_\eps) = H^1(\R^3;\C^3).
    \end{align}
\end{definition}

Next, we define the operator $\mathcal{A}_\chi$:
\begin{definition}\label{defn:main_operator_fiberspace}
    The operator $\mathcal{A}_\chi \equiv (\simgrad + i X_\chi)^* \mathbb{A} (\simgrad + i X_\chi) $ is the operator on $L^2(Y;\C^3)$ defined through its corresponding sesquilinear form $a_\chi$ with form domain $H_{\#}^1(Y;\C^3)$ and action
    \begin{align}\label{form_eqn}
        a_\chi (\vect u, \vect v) = \int_Y \mathbb{A}(y) (\simgrad + i X_\chi) \vect u (y) : \overline{(\simgrad + i X_\chi) \vect v (y)} ~ dy, \quad \vect u,\vect v \in H^1_{\#}(Y;\C^3).
    \end{align}
\end{definition}

\begin{remark}
    We write $\mathcal{D}[\mathcal{A}] = \mathcal{D}(a)$ for the form domain of an operator $\mathcal{A}$ corresponding to the form $a$, and $\mathcal{D}(\mathcal{A})$ for the operator domain. $\sigma(\mathcal{A})$ denotes the spectrum of $\mathcal{A}$, and $\rho(\mathcal{A})$ denotes the resolvent set. We shall omit the differential ``$dy$" where it is understood.
\end{remark}

We recall from \cite[Prop 2.9]{simplified_method}, the relation between $\mathcal{A}_\eps$ and $\mathcal{A}_\chi$:
\begin{proposition}[Passing to the unit cell for $\mathcal{A}_\eps$] \label{prop:pass_to_unitcell} 
    We have the following relation between the forms
    \begin{equation}
        a_\varepsilon(\vect u,\vect v) = \int_{Y'}\frac{1}{\varepsilon^2}a_{\chi}(\mathcal{G}_\varepsilon\vect u,\mathcal{G}_\varepsilon\vect v) d\chi,  \quad \vect u,\vect v \in H^1(\R^3;\C^3),
    \end{equation}
    and the following relations between the operators
    \begin{align}
        \mathcal{A}_\varepsilon &= \mathcal{G}_\varepsilon^* \left( \int_{Y'}^\oplus \frac{1}{\varepsilon^2}\mathcal{A}_{\chi} d\chi \right) \mathcal{G}_\varepsilon, \label{eqn:vonneumannformula_ops} \\
        \left(\mathcal{A}_\varepsilon - zI \right)^{-1} &= \mathcal{G}_\varepsilon^* \left( \int_{Y'}^\oplus \left(\frac{1}{\varepsilon^2}\mathcal{A}_{\chi} - zI \right)^{-1}  d\chi \right) \mathcal{G}_\varepsilon, \qquad \text{for } z \in \rho(\mathcal{A}_\varepsilon).\label{eqn:vonneumannformula_resolvent}
    \end{align}
\end{proposition}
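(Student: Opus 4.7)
The plan is to first establish the form identity, and then promote it to the operator and resolvent identities via standard abstract machinery. The heart of the matter is to rewrite
\[
    a_\eps(\vect u, \vect v) = \int_{\R^3} \A\!\left(\tfrac{x}{\eps}\right) \simgrad \vect u(x) : \overline{\simgrad \vect v(x)} \, dx
\]
in terms of the Gelfand transforms $\mathcal{G}_\eps \vect u, \mathcal{G}_\eps \vect v$. Two ingredients will do the work: the unitarity of $\mathcal{G}_\eps$ as an operator $L^2(\R^3;\C^3) \to L^2(Y \times Y'; \C^3)$, and the covariance formula \eqref{eqn:gelfand_symgrad_formula} that intertwines $\simgrad$ with $\frac{1}{\eps}(\simgrad_y + i X_\chi)$. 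A small but crucial auxiliary observation is that multiplication by the $\eps\Z^3$-periodic matrix $\A_\eps(x) = \A(x/\eps)$ commutes with $\mathcal{G}_\eps$ in the fiberwise sense: for any $\vect F \in L^2(\R^3;\C^{3 \times 3})$,
\[
    \mathcal{G}_\eps(\A_\eps \vect F)(y,\chi) = \A(y)\, (\mathcal{G}_\eps \vect F)(y,\chi).
\]
This follows directly from the series definition \eqref{eqn:gelfand_transform}, since $\A(\eps(y+n)/\eps) = \A(y+n) = \A(y)$ for all $n \in \Z^3$.

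With these pieces in place, the first identity in the proposition is a direct computation: view $a_\eps(\vect u, \vect v)$ as the $L^2(\R^3;\C^{3\times 3})$-inner product of $\A_\eps \simgrad \vect u$ with $\simgrad \vect v$, apply Parseval for $\mathcal{G}_\eps$, pull $\A$ out of the Gelfand transform using the commutation identity, and finally substitute \eqref{eqn:gelfand_symgrad_formula} into both factors. The $\frac{1}{\eps^2}$ prefactor appears naturally as the product of the two $\frac{1}{\eps}$ factors from \eqref{eqn:gelfand_symgrad_formula}, and the integrand matches the definition of $a_\chi(\mathcal{G}_\eps \vect u, \mathcal{G}_\eps \vect v)$ by Fubini. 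One subtlety to verify is that $\mathcal{G}_\eps$ maps $H^1(\R^3;\C^3)$ into $L^2(Y'; H^1_\#(Y;\C^3))$, so that the right-hand side is well-defined; this follows from \eqref{eqn:gelfand_symgrad_formula} together with unitarity of $\mathcal{G}_\eps$ on $L^2$.

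Next, to pass from the form identity to the operator identity \eqref{eqn:vonneumannformula_ops}, I would invoke the standard fact that a closed, symmetric, bounded-below sesquilinear form uniquely determines a self-adjoint operator. Since $\mathcal{G}_\eps$ is unitary, the form on the right-hand side,
\[
    (\vect u, \vect v) \mapsto \int_{Y'} \tfrac{1}{\eps^2} a_\chi(\mathcal{G}_\eps \vect u, \mathcal{G}_\eps \vect v)\, d\chi,
\]
is the form of the operator $\mathcal{G}_\eps^\ast \bigl( \int_{Y'}^\oplus \tfrac{1}{\eps^2} \mathcal{A}_\chi \, d\chi \bigr) \mathcal{G}_\eps$; uniqueness then gives \eqref{eqn:vonneumannformula_ops}. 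Finally, \eqref{eqn:vonneumannformula_resolvent} follows by taking the resolvent of a direct integral fiberwise and using unitarity of $\mathcal{G}_\eps$, provided $z \in \rho(\mathcal{A}_\eps) = \bigcap_\chi \rho(\tfrac{1}{\eps^2}\mathcal{A}_\chi)$ in the appropriate sense.

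The main technical point to watch is the careful bookkeeping in the first step: one must justify interchanging sums, integrals, and the pointwise action of $\A$, and verify the domain compatibility $\vect u \in H^1(\R^3;\C^3) \Leftrightarrow \mathcal{G}_\eps \vect u \in L^2(Y'; H^1_\#(Y;\C^3))$. These are not deep, but they are where the periodicity of $\A$ and the lattice structure of the Gelfand sum interact, and so deserve the most care.
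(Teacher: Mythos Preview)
Your proposal is correct and follows the standard argument. Note, however, that the present paper does not actually prove this proposition: it is recalled verbatim from \cite[Prop.~2.9]{simplified_method} and no proof is given here. Your outline --- Parseval for the unitary $\mathcal{G}_\eps$, the commutation $\mathcal{G}_\eps(\A_\eps \vect F)(y,\chi) = \A(y)(\mathcal{G}_\eps \vect F)(y,\chi)$ coming from $\Z^3$-periodicity of $\A$, substitution of \eqref{eqn:gelfand_symgrad_formula}, and then the form-to-operator correspondence and fiberwise resolvent --- is precisely the standard route and almost certainly matches what is in the cited reference.
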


In other words, $\frac{1}{\eps^2}\mathcal{A}_\chi$ are the fibers of an operator that is unitarily equivalent to $\mathcal{A}_\eps$, under the rescaled Gelfand transform $\mathcal{G}_\eps$.

\paragraph*{Homogenized operator.} We shall now define the homogenized operator $\mathcal{A}^{\hom}$. Since its resolvent at $z=-1$, $(\mathcal{A}^{\hom} + I)^{-1}$, is the leading-order term in our expansion, we introduce it now for the reader's convenience. The definition of higher-order ``corrector operators" are postponed to Section \ref{sect:fiberwise_results}.

To begin, we define the following form, in which we shall extract the homogenized tensor $\A^{\hom}$:
\begin{definition}
    Let $a^{\text{hom}}$ be the bilinear form on $\R^{3 \times 3}_\text{sym}$ defined by
    \begin{equation}
        a^{\rm hom}(\vect \xi, \vect \zeta) = \int_{Y} \A \left( \vect \xi + \simgrad \vect u^{\vect \xi} \right ) : \vect \zeta \, dy, \quad \vect \xi, \vect \zeta \in \R^{3 \times 3}_\text{sym},
    \end{equation}
    where the \textit{corrector term} $\vect u^{\vect \xi} \in H_{\#}^1(Y;\R^3)$ is the unique solution of the cell-problem:
    \begin{equation}\label{correctordefinition}
        \begin{cases}
            \int_{ Y} \A \left( \vect \xi + \simgrad \vect u^{\vect \xi} \right ) : \simgrad \vect v \, dy = 0, \quad \forall \vect v \in  H_{\#}^1(Y;\R^3), \\
            \int_Y \vect u^{ \vect \xi } = 0.
        \end{cases}
    \end{equation}
    By \cite[Prop 2.13]{simplified_method}, $a^{\hom}$ is a positive symmetric bilinear form on $\R^{3\times 3}_\text{sym}$, and thus can be uniquely represented by a (constant) tensor $\A^{\hom} \in \R^{3\times 3\times 3\times 3}$ satisfying the symmetries \eqref{assump:symmetric} as $\A(y)$, 
    \begin{align}
        a^{\hom} (\vect \xi, \vect \zeta) = \mathbb{A}^{\hom} \vect \xi : \vect \zeta,
        \qquad \forall \, \vect \xi, \vect \zeta \in \R^{3\times 3}_\text{sym}. \label{eqn:ahom_tensor_form_repr}
    \end{align}
    Moreover, there exist a constant $\nu_{\hom} = \nu_{\hom}( \nu ) > 0$ such that
    \begin{equation}\label{eqn:Ahomcoercivity_bdd}
        \nu_{\text{hom}} |\vect \xi|^2 \leq \A^{\rm hom}\vect \xi: \vect \xi \leq \frac{1}{\nu_{\text{hom}}} |\vect \xi|^2, \quad \forall \, \vect \xi \, \in \R^{3 \times 3}_\text{sym}. 
    \end{equation}
\end{definition}

The tensor $\mathbb{A}^{\rm hom}$ is used to define the homogenized operator $\mathcal{A}^{\rm hom}$ as follows:

\begin{definition}[Homogenized operator] \label{defn:hom_operator_fullspace}
    Let $\mathcal{A}^{\rm hom}$ be the operator on $L^2(\R^3;\C^3)$ given by the following differential expression 
    \begin{equation}
        \mathcal{A}^{\rm hom}  \equiv (\simgrad)^* \mathbb{A}^{\rm hom} \left(\simgrad \right),
    \end{equation}
    with domain $\mathcal{D}\left(\mathcal{A}^{\rm hom}\right) = H^2(\R^3;\C^3)$. Its corresponding form is given by
    \begin{equation}
        \left\langle \mathcal{A}^{\rm hom} \vect u, \vect v \right\rangle_{L^2(\R^3;\C^3)} = \int_{\R^3} \mathbb{A}^{\rm hom} \simgrad \vect u : \overline{\simgrad \vect v} \, dy, \quad \vect u \in \mathcal{D}(\mathcal{A}^{\rm hom}), \vect v \in \mathcal{D}[\mathcal{A}^{\rm hom}] := H^1(\R^3,\C^3).
    \end{equation}
\end{definition}

We shall also need the corresponding operator for $\mathcal{A}^{\hom}$ in the fiber space, and the analogue of Proposition \ref{prop:pass_to_unitcell}. We begin with the definition of $\mathcal{A}_\chi^{\hom}$.




\begin{definition}\label{defn:hom_matrix}
    For each $\chi \in Y'$, set $\mathcal{A}^{\rm hom}_\chi \in \C^{3 \times 3}$ to be the constant matrix satisfying
    \begin{equation}\label{eqn:hom_chi_matrix}
        \left\langle \mathcal{A}^{\rm hom}_\chi \vect c,\vect d \right\rangle_{\C^3} = \int_{Y} \A\left(  \simgrad   \vect u_{\vect c}  +  iX_\chi   \vect c \right) : \overline{  iX_\chi  \vect d }, \qquad \forall \vect c,\vect d \in \C^3,
    \end{equation}
    where the \textit{corrector term} $\vect u_{\vect c} \in H_\#^1(Y;\C^3)$ is the unique solution of the ($\chi$ dependent) cell-problem
    \begin{equation}\label{eqn:chi_dependent_cell_problem}
        \begin{cases}
		\int_{Y} \A\left(  \simgrad \vect u_{\vect c}  +  iX_\chi  \vect c \right) : \overline{ \simgrad   \vect v} = 0, \quad \forall\vect v \in H^1_{\#}(Y;\C^3),\\
        \int_Y \vect u_{\vect c} = 0.
        \end{cases}
    \end{equation}
\end{definition}

Next, we summarize the key properties of $\mathcal{A}_\chi^{\hom}$.

\begin{proposition}
    $\mathcal{A}_\chi^{\hom}$ is a hermitian matrix, and can be written as $\mathcal{A}_\chi^{\rm hom} = \left(iX_\chi \right)^*\A^{\rm hom} (i X_\chi)$. In particular, $\mathcal{A}_\chi^{\rm hom}$ is quadratic in $\chi$ in the following sense: there exist a constant $\nu_1 >0$ that depends only on $\nu_{\text{hom}}$ (from \eqref{eqn:Ahomcoercivity_bdd}), such that 
    \begin{equation}\label{eqn:achihom_coer_bdd}
        \nu_1 |\chi|^2|\vect c|^2 \leq \langle \mathcal{A}_\chi^{\rm hom} \vect c, \vect c \rangle_{\C^3} \leq \frac{1}{\nu_1}|\chi|^2 |\vect c|^2, \quad \forall \vect c \in \C^3.
    \end{equation} 
\end{proposition}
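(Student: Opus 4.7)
The plan is to first establish the identity $\mathcal{A}_\chi^{\hom} = (iX_\chi)^*\A^{\hom}(iX_\chi)$, from which the hermitian property and the two-sided bound \eqref{eqn:achihom_coer_bdd} will follow almost immediately from the symmetry and coercivity of $\A^{\hom}$ (inherited from $\A$), together with the estimate \eqref{eqn:Xchi_est} on $X_\chi$.

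For the identity, I would fix $\vect c, \vect d \in \C^3$ and note that since $\chi \in Y' \subset \R^3$, the matrix $\vect\xi := iX_\chi \vect c$ belongs to $\C^{3\times 3}_{\text{sym}}$. I would first extend the real-valued corrector from \eqref{correctordefinition} complex-linearly to $\vect\xi \in \C^{3\times 3}_{\text{sym}}$, by setting $\vect u^{\vect\xi} := \vect u^{\Re\vect\xi} + i\,\vect u^{\Im\vect\xi}$, and likewise extend the bilinear form $a^{\hom}$ sesquilinearly, so that the representation \eqref{eqn:ahom_tensor_form_repr} continues to hold on $\C^{3\times 3}_{\text{sym}}$. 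Testing the cell problem \eqref{eqn:chi_dependent_cell_problem} against real and imaginary parts of $\vect v \in H^1_\#(Y;\C^3)$ separately (the conjugation in the test function only contributes an overall scalar factor) and comparing to the extended version of \eqref{correctordefinition}, one sees that both $\vect u_{\vect c}$ and $\vect u^{iX_\chi\vect c}$ solve the same mean-zero elliptic system; uniqueness then forces $\vect u_{\vect c} = \vect u^{iX_\chi \vect c}$. Substituting into \eqref{eqn:hom_chi_matrix} gives
\[
\langle \mathcal{A}_\chi^{\hom} \vect c, \vect d \rangle_{\C^3} = \int_Y \A\bigl( \simgrad \vect u^{iX_\chi \vect c} + iX_\chi \vect c \bigr) : \overline{iX_\chi \vect d} = \A^{\hom}(iX_\chi \vect c) : \overline{iX_\chi \vect d},
\]
which, after pulling $(iX_\chi)^*$ to the left in the Frobenius pairing, is precisely $\langle (iX_\chi)^* \A^{\hom}(iX_\chi)\vect c, \vect d\rangle_{\C^3}$.

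The hermitian property then follows from the material symmetries \eqref{assump:symmetric}, which pass to $\A^{\hom}$ and make $\A^{\hom}$ self-adjoint as an operator on $\C^{3\times 3}_{\text{sym}}$, so that $\mathcal{A}_\chi^{\hom}$ has the form $B^\ast T B$ with $T$ hermitian. For the quadratic bounds, the coercivity and boundedness \eqref{eqn:Ahomcoercivity_bdd} extend to $\C^{3\times 3}_{\text{sym}}$ by a real/imaginary decomposition, giving
\[
\nu_{\hom}\, |iX_\chi \vect c|^2 \;\leq\; \langle \mathcal{A}_\chi^{\hom} \vect c, \vect c\rangle_{\C^3} \;\leq\; \nu_{\hom}^{-1}\, |iX_\chi \vect c|^2,
\]
and the two-sided estimate \eqref{eqn:Xchi_est} converts $|iX_\chi \vect c|^2 = |X_\chi \vect c|^2$ into a multiple of $|\chi|^2 |\vect c|^2$; setting $\nu_1 := \nu_{\hom}/4$ yields \eqref{eqn:achihom_coer_bdd}.

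The only mildly subtle step, routine but worth writing out carefully, is the complex-linear extension of the corrector map $\vect\xi \mapsto \vect u^{\vect\xi}$ and of $\A^{\hom}$ from the real to the complex symmetric setting; everything else reduces to algebra and the quoted real-valued properties of $\A^{\hom}$.
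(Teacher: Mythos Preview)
Your argument is correct and is the natural way to establish the identity and the bounds; the paper itself does not give a self-contained proof here but simply cites \cite[Lemma 5.3, Proposition 5.5]{simplified_method}, and your outline is essentially what that cited argument consists of. The one point worth writing out carefully, as you note, is the complex-linear extension of the corrector and of $\A^{\hom}$, but this is routine once the symmetries \eqref{assump:symmetric} are in hand.
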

\begin{proof}
    \cite[Lemma 5.3, Proposition 5.5]{simplified_method}.
\end{proof}

Thus $\mathcal{A}_\chi^{\text{hom}}$ is diagonalizable, and we may introduce the notation:

\begin{definition}
    Write $\lambda_1^{\text{hom},\chi}$, $\lambda_2^{\text{hom},\chi}$, and $\lambda_3^{\text{hom},\chi}$ for the eigenvalues of $\mathcal{A}_\chi^{\text{hom}}$, arranged in non-decreasing order.
\end{definition}

Finally, we state the analogous result to Proposition \ref{prop:pass_to_unitcell}, for the homogenized operator $\mathcal{A}^{\hom}$.
\begin{proposition}[\texorpdfstring{\cite[Prop. 5.6]{simplified_method}}{} Passing to the unit cell for $\mathcal{A}^{\text{hom}}$] \label{prop:pass_to_unitcell2} 
    We have the following identities: 
    \begin{align}
        \mathcal{A}^{\rm hom} \Xi_\varepsilon 
        &= \mathcal{G}_\eps^* \left(\int_{Y'}^{\oplus} \frac{1}{\eps^2} P_0 \mathcal{A}_\chi^{\rm hom} P_0 \, d\chi \right) \mathcal{G}_\eps. \label{eqn:vn_ahom_ops} \\
        \left( \mathcal{A}^{\rm{hom}} - zI \right)^{-1} \Xi_\eps 
        &= \mathcal{G}_\eps^* \left( \int_{Y'}^{\oplus} \left( \frac{1}{\eps^{2}} \mathcal{A}_{\chi}^{\rm{hom}} - zI_{\C^3} \right)^{-1} P_0 d\chi \right) \mathcal{G}_\eps, \qquad \text{for } z \in \rho(\mathcal{A}^{\rm{hom}}), \label{eqn:vn_ahom_resolvents}
    \end{align}
    where $P_0$ is the projection of $L^2(Y;\C^3)$ onto the space of constant functions $\C^3$, and $\Xi_\eps$ is the smoothing operator $\mathcal{G}_\eps^\ast \left( \int_{Y'}^{\oplus} P_0 \, d\chi \right) \mathcal{G}_\eps$.
\end{proposition}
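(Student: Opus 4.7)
The strategy combines the Gelfand--Plancherel identity of Proposition \ref{prop:pass_to_unitcell} with the facts (already established in the preceding proposition) that $\A^{\hom}$ is constant in space and that $\mathcal{A}_\chi^{\hom} = (iX_\chi)^* \A^{\hom}(iX_\chi)$. My first step would be to verify that $\Xi_\eps \vect u \in \mathcal{D}(\mathcal{A}^{\hom}) = H^2(\R^3;\C^3)$ for any $\vect u \in L^2(\R^3;\C^3)$. Since $\mathcal{G}_\eps(\Xi_\eps \vect u)(y,\chi) = P_0 \mathcal{G}_\eps \vect u(\chi) =: \vect c(\chi)$ is independent of $y$ and $\chi$ ranges over the compact cell $Y'$, the inversion formula \eqref{eqn:gelfand_inversion} shows $\Xi_\eps \vect u$ to be band-limited (Fourier support in $Y'/\eps$), hence smooth with all derivatives in $L^2$.

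For the operator identity \eqref{eqn:vn_ahom_ops}, I would verify it weakly by testing against $\vect v \in H^1(\R^3;\C^3)$. Applying the symmetric-gradient--Gelfand identity \eqref{eqn:gelfand_symgrad_formula} together with the unitarity of $\mathcal{G}_\eps$ rewrites the pairing on the unit cell as
\begin{align*}
    \langle \mathcal{A}^{\hom} \Xi_\eps \vect u, \vect v \rangle_{L^2(\R^3)}
    = \int_{Y'} \frac{1}{\eps^2} \int_Y \A^{\hom} (\simgrad_y + iX_\chi) \vect c(\chi) : \overline{(\simgrad_y + iX_\chi) \mathcal{G}_\eps \vect v} \, dy \, d\chi.
\end{align*}
Because $\vect c(\chi)$ is $y$-independent, the left factor collapses to the $y$-constant symmetric matrix $iX_\chi \vect c(\chi)$. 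The cross term $\int_Y \A^{\hom} iX_\chi \vect c(\chi) : \overline{\simgrad_y \mathcal{G}_\eps \vect v}\,dy$ then vanishes by periodic integration by parts on $Y$: the matrix $\A^{\hom} iX_\chi \vect c(\chi)$ is divergence-free, and $\mathcal{G}_\eps \vect v(\cdot,\chi)$ is genuinely $\Z^3$-periodic in $y$ (a direct check from the series definition \eqref{eqn:gelfand_transform}), so boundary contributions cancel. In the remaining diagonal term, pulling the $y$-constant factor outside and using $\int_Y \mathcal{G}_\eps \vect v \, dy = P_0 \mathcal{G}_\eps \vect v$, together with $\mathcal{A}_\chi^{\hom} = (iX_\chi)^* \A^{\hom}(iX_\chi)$ and self-adjointness of $P_0$, yields
\begin{align*}
    \langle \mathcal{A}^{\hom} \Xi_\eps \vect u, \vect v \rangle_{L^2(\R^3)}
    = \int_{Y'} \frac{1}{\eps^2} \bigl\langle P_0 \mathcal{A}_\chi^{\hom} P_0 \mathcal{G}_\eps \vect u, \mathcal{G}_\eps \vect v \bigr\rangle_{\C^3} \, d\chi,
\end{align*}
which is precisely \eqref{eqn:vn_ahom_ops} after one more application of Gelfand--Plancherel.

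For the resolvent identity \eqref{eqn:vn_ahom_resolvents}, \eqref{eqn:vn_ahom_ops} shows that $\mathcal{A}^{\hom}$ commutes with $\Xi_\eps$ and acts as the direct integral of the constant matrices $\eps^{-2} \mathcal{A}_\chi^{\hom}$ on the range of $\Xi_\eps$. For $z \in \rho(\mathcal{A}^{\hom})$, the coercivity bound \eqref{eqn:achihom_coer_bdd} places every fiber eigenvalue in $[0,\infty) \subseteq \sigma(\mathcal{A}^{\hom})$, so $(\eps^{-2} \mathcal{A}_\chi^{\hom} - z I_{\C^3})^{-1}$ exists and is uniformly bounded in $\chi \in Y'$. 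For $\vect f \in L^2(\R^3;\C^3)$ I would define
\begin{align*}
    \vect w := \mathcal{G}_\eps^* \biggl( \int_{Y'}^{\oplus} \Bigl( \tfrac{1}{\eps^2} \mathcal{A}_\chi^{\hom} - zI_{\C^3} \Bigr)^{-1} P_0 \mathcal{G}_\eps \vect f(\chi) \, d\chi \biggr),
\end{align*}
observe $\vect w = \Xi_\eps \vect w \in H^2$ by the band-limitedness argument above, and verify $(\mathcal{A}^{\hom} - zI) \vect w = \Xi_\eps \vect f$ via \eqref{eqn:vn_ahom_ops}; injectivity of $\mathcal{A}^{\hom} - zI$ then gives \eqref{eqn:vn_ahom_resolvents}.

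The main obstacle is the vanishing of the cross term in the form computation, which hinges on $\mathcal{G}_\eps \vect v$ being genuinely $\Z^3$-periodic (not merely quasi-periodic) in $y$, and then lifting the result from the form level to the operator level, which requires the band-limited $H^2$ regularity of $\Xi_\eps \vect u$ established at the outset. Everything else is essentially bookkeeping around the $P_0$ projections in the fibers.
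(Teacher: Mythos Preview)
Your argument is correct. The paper does not supply its own proof of this proposition; it simply imports the result from \cite[Prop.\ 5.6]{simplified_method}, so there is nothing to compare against line by line. That said, your route---exploiting that $\A^{\hom}$ is constant so the fibrewise form on $\Xi_\eps$-data reduces to $(iX_\chi)^*\A^{\hom}(iX_\chi)=\mathcal{A}_\chi^{\hom}$ via the Gelfand--symgrad identity \eqref{eqn:gelfand_symgrad_formula}, with the cross term killed by periodicity---is exactly the natural one and is what the cited proof does. One small sharpening: the cross term $\int_Y \A^{\hom}(iX_\chi \vect c):\overline{\simgrad_y \mathcal{G}_\eps \vect v}\,dy$ vanishes simply because the first factor is $y$-constant and $\int_Y \nabla_y \vect w\,dy=0$ for any $\Z^3$-periodic $\vect w$; you do not need to invoke ``divergence-free'' language. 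Everything else (band-limitedness $\Rightarrow H^2$, fibrewise invertibility from $\sigma(\mathcal{A}^{\hom})=[0,\infty)$, and the direct verification of the resolvent identity) is as you describe.
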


We refer the reader to Section \ref{sect:recap_spectral_analysis} for a further discussion on the connection between $P_0$, $\Xi_\eps$, and the spectrum of $\mathcal{A}_\chi$.

\newpage
\subsection{Main results}\label{sect:main_results}
The main result of this paper is as follows.
\begin{theorem}\label{thm:main_result_intro}
    Let $\vect f \in L^2(\R^3;\C^3)$ be such that its Fourier transform $\mathcal{F}(\vect f)$ is supported in a compact set $K$. Then there exist constants $\mu = \mu(\nu , C_\text{Fourier}) > 0$ (see \eqref{eqn:constant_mu_smallfreq} and Remark \ref{rmk:mu_rho0_dependencies}) and $\eps_0 = \eps_0(\mu, K)>0$ such that whenever $0<\eps<\eps_0$, the following estimate hold for each $n \in \N_0$:
    \begin{align}
        \left\| 
        \left( \mathcal{A}_\eps + I \right)^{-1} \widehat{\Xi}_{\eps,\mu} \vect f - \sum_{k=0}^n \left( \mathcal{R}_{0,\eps}^{(k)} + \mathcal{R}_{1,\eps}^{(k)} + \mathcal{R}_{2,\eps}^{(k)} \right) \widehat{\Xi}_{\eps,\mu} \vect f
        \right\|_{L^2(\R^3;\C^3)}
        \leq C^{n+1} \eps^{n+1} \| \vect f \|_{L^2(\R^3;\C^3)}, \label{eqn:l2tol2_intro} \\
        \left\| 
        \left( \mathcal{A}_\eps + I \right)^{-1} \widehat{\Xi}_{\eps,\mu} \vect f - \sum_{k=0}^n \left( \mathcal{R}_{0,\eps}^{(k)} + \mathcal{R}_{1,\eps}^{(k)} + \mathcal{R}_{2,\eps}^{(k)} \right) \widehat{\Xi}_{\eps,\mu} \vect f
        \right\|_{H^1(\R^3;\C^3)}
        \leq C^{n+1} \eps^{n} \| \vect f \|_{L^2(\R^3;\C^3)}, \label{eqn:l2toh1_intro}
    \end{align}
    for some constant $C = C(K, \nu, C_\text{Korn}, C_\text{Fourier}) > 0$. 
    The \textit{full-space corrector operators} $\mathcal{R}_{j,\eps}^{(k)}$ are defined in Section \ref{sect:rescaled_and_fullspace_ops}, the \textit{Bloch approximation operator} $\widehat{\Xi}_{\eps,\mu}$ is given by Definition \ref{defn:bloch_approximation}, and the constants $C_\text{Korn}$ and $C_\text{Fourier}$ are introduced in Appendix \ref{appendix:useful_estimates}.
\end{theorem}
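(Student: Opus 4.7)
The plan is to reduce \eqref{eqn:l2tol2_intro}--\eqref{eqn:l2toh1_intro} to the fiber-wise expansion estimates developed in Sections \ref{sect:fibrewise_asmptotics}--\ref{sect:fiberwise_results}, via the Gelfand decomposition in Proposition \ref{prop:pass_to_unitcell}. The key observation is that the compact Fourier support of $\vect f$, together with the Bloch approximation operator $\widehat{\Xi}_{\eps,\mu}$, will concentrate $\mathcal{G}_\eps \vect f$ onto a small ball $\{|\chi|\leq \mu\}$ once $\eps$ is small enough relative to $\operatorname{diam}(K)$; this is precisely the regime in which the fiber-wise asymptotic procedure is quantitative.

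First, I would apply Proposition \ref{prop:pass_to_unitcell} to $(\mathcal{A}_\eps + I)^{-1}$ and the analogous passage (Proposition \ref{prop:pass_to_unitcell2}) to the corrector operators $\mathcal{R}_{j,\eps}^{(k)}$ (whose definition in Section \ref{sect:rescaled_and_fullspace_ops} is fiber-wise, via $\mathcal{G}_\eps$). This rewrites the operator on the left-hand side of \eqref{eqn:l2tol2_intro} as
\begin{equation*}
    \mathcal{G}_\eps^{\ast}\left( \int_{Y'}^{\oplus} E_\chi^{(n)} \, d\chi \right) \mathcal{G}_\eps \widehat{\Xi}_{\eps,\mu},
\end{equation*}
where $E_\chi^{(n)} = (\tfrac{1}{\eps^2}\mathcal{A}_\chi + I)^{-1} - \sum_{k=0}^{n} (\text{fiber-level correctors})$ is precisely the remainder of the $n$-cycle expansion constructed in Section \ref{sect:fibrewise_asmptotics}. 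Because $\mathcal{G}_\eps$ is unitary, $\|\cdot\|_{L^2(\R^3;\C^3)}$ reduces to a weighted uniform bound on $\|E_\chi^{(n)}\|_{L^2(Y;\C^3)\to L^2(Y;\C^3)}$ on the support of $\widehat{\Xi}_{\eps,\mu}\mathcal{G}_\eps \vect f$.

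Next, I would invoke the fiber-wise output of Section \ref{sect:fiberwise_estimates_working}: the $n$-th cycle of the expansion yields a remainder bound of the form $\|E_\chi^{(n)}\|_{L^2 \to L^2} \lesssim (C|\chi|)^{n+1}$ for all $|\chi|\leq \mu$, and an analogous bound with $\|\cdot\|_{L^2 \to H^1_\#(Y;\C^3)}$ losing one power of $|\chi|$. Since $\mathcal{F}(\vect f)\subset K$, the Plancherel-type bound encoded in $C_\text{Fourier}$ (Appendix \ref{appendix:useful_estimates}) implies that $\mathcal{G}_\eps\vect f$ is supported in $\{|\chi|\leq \eps\sup_{\theta\in K}|\theta|\}$, and in this region each factor of $|\chi|$ becomes a factor of $\eps$; taking $\eps_0 = \mu /\sup_{\theta\in K}|\theta|$ ensures the inclusion $\{|\chi|\leq \eps\sup_{\theta\in K}|\theta|\} \subset \{|\chi|\leq \mu\}$. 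Collecting constants produces the $C^{n+1}\eps^{n+1}\|\vect f\|_{L^2}$ bound in \eqref{eqn:l2tol2_intro}.

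For the $H^1$-estimate \eqref{eqn:l2toh1_intro}, I would additionally use the identity \eqref{eqn:gelfand_symgrad_formula} together with the Korn inequality (constant $C_\text{Korn}$) to transfer $\simgrad$ onto the Gelfand fiber; this introduces a factor of $\eps^{-1}$ which converts the fiber $L^2 \to H^1_\#(Y;\C^3)$ bound into the full-space $L^2(\R^3)\to H^1(\R^3)$ bound and explains the single power of $\eps$ lost between \eqref{eqn:l2tol2_intro} and \eqref{eqn:l2toh1_intro}. The main obstacle I anticipate is the fiber-wise step: the relevant resolvent is $(\tfrac{1}{\eps^2}\mathcal{A}_\chi + I)^{-1} = \tfrac{\eps^2}{|\chi|^2}(\tfrac{1}{|\chi|^2}\mathcal{A}_\chi - z I)^{-1}$ at the moving parameter $z = -\eps^2/|\chi|^2 \in (-\infty,0)$, so the $n$-cycle expansion and the threshold $\mu$ must be built (through the spectral analysis recalled in Section \ref{sect:recap_spectral_analysis}) to be uniform not only in $|\chi|\to 0$ but also in this auxiliary $z$, which varies over an unbounded range as $|\chi|/\eps$ ranges over the compact set determined by $K$. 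Once this uniformity is in hand, the combinatorics of the $n$ cycles gives the geometric-in-$n$ constant $C^{n+1}$ and the theorem follows.
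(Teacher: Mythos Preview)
Your overall architecture is correct and matches the paper: Gelfand decomposition, fiber-wise remainder estimates, compact Fourier support converting powers of $|\chi|$ into powers of $\eps$, and the identity \eqref{eqn:gelfand_symgrad_formula} explaining the lost power of $\eps$ in $H^1$. But the step you yourself flag as ``the main obstacle'' is handled quite differently in the paper, and your proposed route through it would not succeed with the estimates available.

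You suggest reading $(\tfrac{1}{\eps^2}\mathcal{A}_\chi+I)^{-1}$ as $\tfrac{\eps^2}{|\chi|^2}(\tfrac{1}{|\chi|^2}\mathcal{A}_\chi-zI)^{-1}$ at the moving point $z=-\eps^2/|\chi|^2$ and then asking the cycle estimates to be uniform there. This fails for two reasons. First, as $|\chi|\to 0$ (with $\eps$ fixed) that $z$ runs to $-\infty$, and the bound in Theorem~\ref{thm:fibrewise_estimates} contains undamped factors $|z|^l$ up to $l=n+2$, so it blows up. Second, the rescaled correctors $\mathcal{R}_{j,\chi,\eps}^{(k)}$ are \emph{not} defined by evaluating $\mathcal{R}_{j,\chi}^{(k)}(z)$ at that point; they are defined by the contour integral in Definition~\ref{defn:corrector_operators_chi_rescaled}. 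The paper's device is precisely this Cauchy integral: with $g_{\eps,\chi}(z)=(\tfrac{|\chi|^2}{\eps^2}z+1)^{-1}$ and the \emph{fixed} contour $\Gamma\subset\{\Re z>0\}$ of Lemma~\ref{lem:contour_existence},
\[
P_\chi\Bigl(\tfrac{1}{\eps^2}\mathcal{A}_\chi+I\Bigr)^{-1}P_\chi
=-\frac{1}{2\pi i}\oint_\Gamma g_{\eps,\chi}(z)\Bigl(\tfrac{1}{|\chi|^2}\mathcal{A}_\chi-zI\Bigr)^{-1}\,dz,
\]
and likewise for each corrector. Now $z$ stays on the bounded set $\Gamma$, so all $z$-dependent constants in Theorem~\ref{thm:fibrewise_estimates} collapse to a uniform $C^{n}$ (Remark~\ref{rmk:z_on_contour_nice_part2}); the $\eps$-dependence is carried solely by $g_{\eps,\chi}$, bounded via Lemma~\ref{lem:function_g_bound} by $(\max\{|\chi|^2/\eps^2,1\})^{-1}$. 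Combined with $|\chi|\leq c_{\text{supp}}\eps$ this gives the $C^{n+1}\eps^{n+1}$ bound cleanly.

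One further point: the role of $\widehat{\Xi}_{\eps,\mu}$ is not to concentrate in $\chi$ (the compact Fourier support of $\vect f$ already does that, Proposition~\ref{prop:compact_fourier_support_properties}) but to project onto $\range(P_\chi)$, so that the $(I-P_\chi)$ branch of the resolvent vanishes on the data and the contour formula \eqref{eqn:cauchy_integral_achi} captures the resolvent exactly. Without this projection the contour $\Gamma$ misses the higher eigenvalues of $\tfrac{1}{|\chi|^2}\mathcal{A}_\chi$ and the argument breaks.
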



Theorem \ref{thm:main_result_intro} is proven in Section \ref{sect:fullspace_estimates} as Theorem \ref{thm:l2tol2} for \eqref{eqn:l2tol2_intro}, and Theorem \ref{thm:l2toh1} for \eqref{eqn:l2toh1_intro}.

\begin{remark}
    \begin{itemize}
        \item Since the constant $C$ in \eqref{eqn:l2tol2} depends on $\vect f$ only through its Fourier support, we may further conclude that the estimate \eqref{eqn:l2tol2} is uniform over the collection of all $\vect f \in L^2(\R^3;\C^3)$ whose Fourier support lies in a \textit{common} compact set $K \subset \R^3$.

        \item It is known (e.g. \cite{birman_suslina_2004,birman_suslina_2006_l2l2higherorder}) and also shown using the operator asymptotic method (in \cite[Sect. 6]{simplified_method}) that the assumption of compact Fourier support and the use of Bloch approximation $\widehat{\Xi}_{\eps,\mu}$ may be omitted, for $L^2$ estimates \eqref{eqn:l2tol2_intro} of order $\eps^2$, and $H^1$ estimates \eqref{eqn:l2toh1_intro} of order $\eps$. Extra preparation of the data is only necessary if one wishes to go beyond $\mathcal{O}(\eps^2)$ error in $L^2$ or $\mathcal{O}(\eps)$ error in $H^1$.

        \item While it is necessary to impose additional restrictions on $\vect f \in L^2$ in order to obtain higher-order estimates, this paper does not aim for the optimal set of conditions for $\vect f$. Rather, we focus on one that is most natural from the spectral perspective. In particular, our choice, the Bloch approximation $\widehat{\Xi}_{\eps,\mu}$, projects the data $\mathcal{G}_\eps \vect f$ onto the eigenspace corresponding to the eigenvalues at the bottom of the spectrum $\sigma(\mathcal{A}_\chi)$. Details can be found in Section \ref{sect:well_prepared_data}. 
        We shall leave the extension of the method to larger classes of functions, e.g. compact Fourier support or Schwartz class, to a future work.

        \item It should also be noted that our choice $\widehat{\Xi}_{\eps,\mu}$ is a natural extension of \cite[Definition 1.5]{lamacz_yousept_2021} and \cite[Section 1.2]{conca_orive_vanninathan2002} to vector-valued functions (compare the definitions therein with Remark \ref{rmk:more_alt_formulae_blochapprox}). For this reason, we use the name ``Bloch approximation" for $\widehat{\Xi}_{\eps,\mu}$.

        \item One may recover the classical homogenization result from Theorem \ref{thm:main_result_intro} as follows: As noted above, we may drop $\widehat{\Xi}_{\eps,\mu}$. Then combine Proposition \ref{prop:pass_to_unitcell2}, Remark \ref{rmk:corrector_ops_vs_classical_formula_fibrespace}, Definitions \ref{defn:corrector_operators_chi_rescaled}-\ref{defn:corrector_operators_fullspace} to obtain
        \begin{align}
            \mathcal{R}_{0,\eps}^{(0)} = \left( \mathcal{A}^{\hom} + I \right)^{-1} \Xi_\eps.
        \end{align}
        Now use \eqref{eqn:fewer_terms_1}, Theorem \ref{thm:l2tol2}, and $\| (\mathcal{A}^{\hom} + I)^{-1} (I - \Xi_\eps) \|_{L^2\rightarrow L^2} \leq C \eps^2$ \cite[Theorem 6.17]{simplified_method} to get
        \begin{align}
            \| \left( \mathcal{A}_\eps + I \right)^{-1} - \left( \mathcal{A}^{\hom} + I \right)^{-1} \|_{L^2\rightarrow L^2} \leq C \eps.
        \end{align}
        This has been shown using various approaches, including the operator asymptotic method as \cite[Theorem 2.15]{simplified_method}. A similar remark holds for the $L^2\rightarrow H^1$ $\mathcal{O}(\eps)$ case, see Remark \ref{eqn:fewer_terms_1}.
        
        \item To keep the formulae clean, we have used $\sum_{k=0}^n \left( \mathcal{R}_{0,\eps}^{(k)} + \mathcal{R}_{1,\eps}^{(k)} + \mathcal{R}_{2,\eps}^{(k)} \right)$ in \eqref{eqn:l2tol2_intro}-\eqref{eqn:l2toh1_intro}. Some terms may be dropped without changing the order of the error. See Remarks \ref{rmk:fewer_terms_l2l2} and \ref{rmk:fewer_terms_l2h1}. \qedhere
    \end{itemize}
\end{remark}

\newpage

\section{A recap on the spectral analysis of \texorpdfstring{$\mathcal{A}_\chi$}{Achi} and consequences}\label{sect:recap_spectral_analysis}
This section summarizes and extends the relevant material from \cite[Section 4, 5.1]{simplified_method}.

First, by the compact embedding of $H^1_{\#}(Y;\C^3)$ into $L^2(Y;\C^3)$ \cite[Corollary 6.11]{david_borthwick}, the spectrum of $\mathcal{A}_\chi$ is discrete. We label its eigenvalues $\lambda_n^\chi$ in non-decreasing order:
\begin{align}
    0 \leq \lambda_1^\chi \leq \lambda_2^\chi \leq \lambda_3^\chi \leq \lambda_4^\chi \leq \cdots \rightarrow \infty
\end{align}

\begin{definition}\label{defn:proj_achi_eigenspace}
    Let $P_\chi: L^2(Y;\C^3) \rightarrow L^2(Y;\C^3)$ be the projection onto the eigenspace corresponding to the first three eigenvalues $\lambda_1^\chi$, $\lambda_2^\chi$, $\lambda_3^\chi$ of $\mathcal{A}_\chi$.
\end{definition}

Next, recall that the Rayleigh quotient associated with to form $a_\chi$ (Definition \ref{defn:main_operator_fiberspace}) is given by 
\begin{equation}
    \mathcal{R}_\chi(\vect u) = \frac{a_\chi( \vect u, \vect u)}{\Vert \vect u\Vert _{L^2(Y;\C^3)}^2}, \quad \vect u \in H^1_{\#}(Y;\C^3) \setminus \{ 0 \}.
\end{equation}

Then,
\begin{proposition}\label{prop:Rayleighestim}
    There exist constants $C_{\text{rayleigh}} > c_{\text{rayleigh}} > 0$, depending only on $C_\text{Fourier}$ and $\nu$, such that
    \begin{alignat}{2}
        c_{\text{rayleigh}}{|\chi|^2} &\leq \mathcal{R}_\chi(\vect u)    &&\forall \vect u \in H^1_{\#}(Y;\C^3)\setminus \{ 0 \}, \label{Rayleighestim_1} \\
        0 &\leq\mathcal{R}_\chi(\vect u) \leq C_{\text{rayleigh}}{|\chi|^2} \qquad   &&\forall \vect u \in \C^3 \setminus \{ 0 \}, \label{Rayleighestim_2}\\
        c_{\text{rayleigh}} &\leq \mathcal{R}_\chi(\vect u)    &&\forall \vect u \in (\C^3)^\perp \cap  H_{\#}^1(Y;\C^3) \setminus \{ 0 \}, \label{Rayleighestim_3}
    \end{alignat}
    where $\C^3$ is viewed as a subspace of $L^2(Y;\C^3)$ by identifying $\C^3$ with the space of constant functions.
\end{proposition}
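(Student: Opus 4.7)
\textbf{Proof proposal for Proposition \ref{prop:Rayleighestim}.} The plan is to pass to Fourier series on $Y$ and reduce all three bounds to a mode-by-mode computation involving the operator $X_\xi$ on $\C^3$. As a first step, the ellipticity hypothesis \eqref{assump:elliptic} applied to the $\C^3_{\mathrm{sym}}$-valued tensor $(\simgrad + iX_\chi)\vect u$ gives
\begin{equation*}
    \nu \, \|(\simgrad + iX_\chi)\vect u\|_{L^2(Y)}^2 \;\leq\; a_\chi(\vect u, \vect u) \;\leq\; \tfrac{1}{\nu}\,\|(\simgrad + iX_\chi)\vect u\|_{L^2(Y)}^2,
\end{equation*}
so it suffices to obtain two-sided bounds on $\|(\simgrad + iX_\chi)\vect u\|_{L^2(Y)}^2$ in each of the three regimes. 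Nonnegativity of $\mathcal{R}_\chi$ then comes for free.

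The key observation is the Leibniz-type identity $\simgrad(e^{i\chi\cdot y}\vect u(y)) = e^{i\chi\cdot y}(\simgrad \vect u + i X_\chi \vect u)$, which is essentially the content of \eqref{eqn:gelfand_symgrad_formula}; since $|e^{i\chi \cdot y}| = 1$, this yields $\|(\simgrad + iX_\chi)\vect u\|_{L^2(Y)} = \|\simgrad(e^{i\chi\cdot y}\vect u)\|_{L^2(Y)}$. Expanding the periodic function $\vect u$ as a Fourier series $\vect u(y) = \sum_{n\in\Z^3} \vect c_n e^{2\pi i n\cdot y}$ turns $e^{i\chi\cdot y}\vect u(y)$ into $\sum_n \vect c_n e^{i(2\pi n+\chi)\cdot y}$, and a direct differentiation shows $\simgrad(e^{i\chi\cdot y}\vect u) = i\sum_n X_{2\pi n+\chi}(\vect c_n) \, e^{i(2\pi n+\chi)\cdot y}$. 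Orthogonality of $\{e^{i(2\pi n+\chi)\cdot y}\}_n$ on $Y$ (Parseval, which is where $C_{\mathrm{Fourier}}$ enters) and the pointwise bound \eqref{eqn:Xchi_est} then give
\begin{equation*}
    \tfrac{1}{4}\sum_{n\in\Z^3} |2\pi n + \chi|^2\, |\vect c_n|^2 \;\leq\; \|(\simgrad + iX_\chi)\vect u\|_{L^2(Y)}^2 \;\leq\; \sum_{n\in\Z^3} |2\pi n + \chi|^2\, |\vect c_n|^2.
\end{equation*}

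All three estimates then follow by a short case analysis on $|2\pi n + \chi|^2$ for $\chi\in Y' = [-\pi,\pi)^3$. For \eqref{Rayleighestim_1} I would verify that there is a universal constant $c_0>0$ with $|2\pi n + \chi|^2 \geq c_0|\chi|^2$ for every $n\in\Z^3$ and $\chi\in Y'$: the case $n=0$ is trivial, while for $n\neq 0$ one uses $|2\pi n+\chi|\geq 2\pi-\pi\sqrt{3}$ together with $|\chi|\leq\pi\sqrt{3}$; summing in $n$ and invoking Parseval gives $a_\chi(\vect u,\vect u)\geq (\nu c_0/4)|\chi|^2\|\vect u\|_{L^2}^2$. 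For \eqref{Rayleighestim_3} the mean-zero condition forces $\vect c_0 = 0$, so only $n\neq 0$ contributes, and then $|2\pi n+\chi|^2\geq \pi^2(2-\sqrt{3})^2$ is bounded below \emph{uniformly in $\chi$}, yielding the $\chi$-independent constant $c_{\mathrm{rayleigh}}$. For \eqref{Rayleighestim_2}, $\vect u\in\C^3$ leaves only $n=0$, and the upper Fourier bound combined with the upper ellipticity estimate gives $a_\chi(\vect u,\vect u)\leq \tfrac{1}{\nu}|\chi|^2|\vect u|^2$. Choosing $c_{\mathrm{rayleigh}}$ and $C_{\mathrm{rayleigh}}$ as the smallest/largest of the constants produced in the three cases completes the proof.

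The argument is essentially algebraic once the Fourier reduction is set up, so no serious obstacle is expected; the only mildly delicate point is keeping track of the two separate uses of the bound $|2\pi n+\chi|\geq c(|\chi|$ or $1)$ to obtain constants that depend only on $\nu$ and $C_{\mathrm{Fourier}}$, and in particular are independent of $\chi\in Y'$ in part \eqref{Rayleighestim_3}.
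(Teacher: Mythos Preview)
Your proposal is correct and is essentially the same argument as the one the paper invokes: the cited \cite[Proposition~4.2]{simplified_method} rests on Proposition~\ref{prop:coercive_est}, whose proof is precisely the Fourier-series computation you carry out (ellipticity, the identity $\simgrad(e^{i\chi\cdot y}\vect u)=e^{i\chi\cdot y}(\simgrad+iX_\chi)\vect u$, Parseval, and the pointwise bound \eqref{eqn:Xchi_est}). One small remark: your parenthetical ``which is where $C_{\text{Fourier}}$ enters'' is slightly misleading, since Parseval is an exact identity; the constant $C_{\text{Fourier}}$ in the paper is just the name given to the output of this very Fourier argument in Proposition~\ref{prop:coercive_est}, so your explicit constants $\pi^2(2-\sqrt{3})^2$ etc.\ \emph{are} $C_{\text{Fourier}}$ rather than depending on it.
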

\begin{proof}
    \cite[Proposition 4.2]{simplified_method}. The constant $C_\text{Fourier}$ comes from Proposition \ref{prop:coercive_est}.
\end{proof}

\begin{remark}\label{rmk:proj_0_is_integral_in_y}
    By Proposition \ref{prop:Rayleighestim}, the first three eigenvalues of $\mathcal{A}_{\chi=0}$ are $\lambda^0_1 = \lambda^0_2 = \lambda^0_3 = 0$. Moreover, it is clear that $\C^3$ is contained in the eigenspace $\eigenspace{\left( 0 ; \mathcal{A}_0 \right)} = \ker{(\mathcal{A}_0)}$. Since the latter is three-dimensional, we must have $\C^3 = \ker{(\mathcal{A}_0)}$. Thus, $P_0$ is the projection onto $\C^3$, i.e. 
    \begin{equation*}
        P_0:L^2(Y;\C^3) \rightarrow \C^3 \hookrightarrow L^2(Y;\C^3),
        \qquad P_0 \vect u = \int_Y \vect u. \qedhere
    \end{equation*}
\end{remark}

\subsection{Consequence 1: Separation of spectrum \texorpdfstring{$\sigma(\mathcal{A}_\chi)$}{spec(Achi)}, for small \texorpdfstring{$\chi$}{chi}.}

By combining Proposition \ref{prop:Rayleighestim} with the min-max principle \cite[Theorem 5.15]{david_borthwick}, we obtain
\begin{corollary}\label{cor:eigenvalue_sizes}
    The spectrum $\sigma(\mathcal{A}_\chi)$ contains three eigenvalues of order $\mathcal{O}({|\chi|^2})$ as $|\chi| \downarrow 0$, while the remaining eigenvalues are uniformly bounded away from zero.
\end{corollary}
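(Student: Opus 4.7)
The plan is a direct two-sided application of the Courant–Fischer (min-max / max-min) principle, using the estimates (\ref{Rayleighestim_1})--(\ref{Rayleighestim_3}) on the Rayleigh quotient $\mathcal{R}_\chi$ together with the observation from Remark \ref{rmk:proj_0_is_integral_in_y} that the space of constant functions $\C^3 \subset L^2(Y;\C^3)$ is a natural three-dimensional trial space.

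First, to handle the three lowest eigenvalues, I would use the min form
\[
    \lambda_n^\chi \;=\; \min_{\substack{V \subset H^1_\#(Y;\C^3) \\ \dim V = n}} \; \max_{\vect u \in V \setminus \{0\}} \mathcal{R}_\chi(\vect u),
\]
applied with $n=3$ and with the trial space $V = \C^3$ (which lies inside $H^1_\#$ as constant functions). Bound (\ref{Rayleighestim_2}) gives $\max_{\vect u \in V\setminus\{0\}} \mathcal{R}_\chi(\vect u) \leq C_{\text{rayleigh}}|\chi|^2$, so that $\lambda_3^\chi \leq C_{\text{rayleigh}}|\chi|^2$, and hence $\lambda_1^\chi \leq \lambda_2^\chi \leq \lambda_3^\chi = \mathcal{O}(|\chi|^2)$ as $|\chi|\downarrow 0$.

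Second, to bound $\lambda_4^\chi$ from below uniformly in $\chi$, I would use the dual max-min form
\[
    \lambda_n^\chi \;=\; \max_{\substack{W \subset L^2(Y;\C^3) \\ \dim W = n-1}} \; \min_{\vect u \in W^\perp \cap H^1_\# \setminus \{0\}} \mathcal{R}_\chi(\vect u),
\]
applied with $n=4$ and with the test subspace $W = \C^3$. The orthogonal complement $(\C^3)^\perp$ is exactly the space of mean-zero $L^2$ functions, and (\ref{Rayleighestim_3}) gives $\mathcal{R}_\chi(\vect u) \geq c_{\text{rayleigh}}$ for all nonzero $\vect u \in (\C^3)^\perp \cap H^1_\#$. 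Hence $\lambda_4^\chi \geq c_{\text{rayleigh}}$, uniformly in $\chi \in Y'$, and since the eigenvalues are nondecreasing in $n$, the same uniform lower bound is inherited by $\lambda_n^\chi$ for every $n \geq 4$.

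There is no real obstacle here: the two Rayleigh-quotient estimates of Proposition \ref{prop:Rayleighestim} were crafted precisely so that picking $V = W = \C^3$ in the two forms of min-max yields the separation. The only small care to take is to verify that the max-min formulation is applicable in this form domain setting (it follows from the fact that $\mathcal{A}_\chi$ has compact resolvent and $\C^3 \subset H^1_\# = \mathcal{D}[\mathcal{A}_\chi]$), and to state clearly that ``$\mathcal{O}(|\chi|^2)$" is meant as an upper bound for the first three eigenvalues since $\lambda_1^\chi, \lambda_2^\chi, \lambda_3^\chi \geq 0$.
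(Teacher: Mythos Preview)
Your proposal is correct and is exactly the argument the paper has in mind: the paper's proof is the single line ``combine Proposition \ref{prop:Rayleighestim} with the min-max principle,'' and you have simply unpacked that line by testing with $V=W=\C^3$. One small addition: for the two-sided meaning of ``order $|\chi|^2$'' you can also invoke (\ref{Rayleighestim_1}), which immediately gives $\lambda_1^\chi \geq c_{\text{rayleigh}}|\chi|^2$ and hence matching lower bounds for the first three eigenvalues.
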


Moreover, we recall the following from \cite[Lemma 6.1]{simplified_method}:
\begin{lemma}\label{lem:contour_existence}
    There exists a closed contour  $\Gamma \subset \left\{z\in \C, \Re(z)>0\right\}$, oriented anticlockwise, where:
    
    \begin{itemize}
        \item \textbf{(Separation of spectrum)} There exist some $\mu>0$, such that for each $\chi \in [-\mu,\mu]^3 \setminus \left\{0\right\}$, $\Gamma$ encloses the three smallest eigenvalues of the operators $\frac{1}{|\chi|^2}\mathcal{A}_\chi$ and $\frac{1}{|\chi|^2}\mathcal{A}_{\chi}^{\rm hom}$. That is, the points
        \begin{align}\label{eqn:contour_separation}
            \frac{1}{|\chi|^2} \lambda_i^\chi, \quad \frac{1}{|\chi|^2} \lambda_i^{\text{hom},\chi}, \quad i=1,2,3.
        \end{align}
        Furthermore, $\Gamma$ does not enclose any other eigenvalues of $\frac{1}{|\chi|^2}\mathcal{A}_\chi$ (and $\frac{1}{|\chi|^2}\mathcal{A}_{\chi}^{\rm hom}$).

        \item \textbf{(Buffer between contour and spectra)} There exist some $\rho_0 > 0$ such that 
        \begin{align}\label{eqn:contour_buffer}
            \inf_{\substack{ z\in \Gamma, \\ \chi \in [-\mu,\mu]^3\setminus \{ 0 \} \\ i \in \{1,2,3,4\}}} \left|z - \frac{1}{|\chi|^2}\lambda_i^{\chi} \right| \geq \rho_0
            \qquad \text{ and } \quad
            \inf_{\substack{ z\in \Gamma, \\ \chi \in [-\mu,\mu]^3\setminus \{ 0 \} \\ i \in \{1,2,3\}} } \left|z - \frac{1}{|\chi|^2}\lambda_i^{\text{hom},\chi} \right| \geq \rho_0.
        \end{align}
        
    \end{itemize}
\end{lemma}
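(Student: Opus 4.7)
The plan is to explicitly locate all eigenvalues appearing in the statement, and then choose $\Gamma$ to be a circle around their common location with a prescribed buffer. To locate $\lambda_i^\chi$ for $i=1,2,3$, I would apply the Courant--Fischer min-max principle with the test subspace of constant functions $\C^3 \subset H^1_\#(Y;\C^3)$: the Rayleigh estimate \eqref{Rayleighestim_2} supplies the upper bound $\lambda_i^\chi \leq C_{\text{rayleigh}}|\chi|^2$, while \eqref{Rayleighestim_1} gives the matching lower bound $\lambda_i^\chi \geq c_{\text{rayleigh}}|\chi|^2$. For the fourth eigenvalue I would invoke the dual max-min formulation $\lambda_4^\chi = \max_{\dim W = 3}\min_{0 \neq \vect u \perp W}\mathcal{R}_\chi(\vect u)$, set $W = \C^3$, and apply \eqref{Rayleighestim_3} to obtain $\lambda_4^\chi \geq c_{\text{rayleigh}}$ (so $\lambda_4^\chi/|\chi|^2 \to \infty$ as $|\chi|\downarrow 0$). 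The homogenized side is easier: \eqref{eqn:achihom_coer_bdd} directly yields $\nu_1|\chi|^2 \leq \lambda_i^{\text{hom},\chi} \leq |\chi|^2/\nu_1$ for $i=1,2,3$.

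Consequently, after rescaling by $|\chi|^{-2}$, the three smallest eigenvalues of $\frac{1}{|\chi|^2}\mathcal{A}_\chi$ and all three eigenvalues of $\frac{1}{|\chi|^2}\mathcal{A}_\chi^{\hom}$ lie in the fixed interval $[m,M] \subset (0,\infty)$, where $m := \min(c_{\text{rayleigh}},\nu_1)$ and $M := \max(C_{\text{rayleigh}},1/\nu_1)$. I would set $\rho_0 := m/2$ and take $\Gamma$ to be the positively oriented circle centered on the real axis at $(m+M)/2$ with radius $(M-m)/2 + \rho_0$. Its leftmost point is $m/2 > 0$, so $\Gamma$ lies entirely in the open right half plane, and by construction the distance from $\Gamma$ to $[m,M]$ is exactly $\rho_0$; this handles the buffer condition in \eqref{eqn:contour_buffer} for $i=1,2,3$ against both families. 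To also exclude $\frac{1}{|\chi|^2}\lambda_4^\chi$ (and, by monotonicity, the higher rescaled eigenvalues of $\mathcal{A}_\chi/|\chi|^2$) with the same buffer, I would combine $\lambda_4^\chi \geq c_{\text{rayleigh}}$ with $|\chi|^2 \leq 3\mu^2$ to obtain $\frac{\lambda_4^\chi}{|\chi|^2} \geq \frac{c_{\text{rayleigh}}}{3\mu^2}$, and then choose $\mu > 0$ small enough that $\frac{c_{\text{rayleigh}}}{3\mu^2} \geq M + \rho_0$, i.e.\ $\mu \leq \sqrt{c_{\text{rayleigh}}/(3(M+\rho_0))}$.

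The only real subtlety is bookkeeping rather than analysis: a single contour $\Gamma$ must simultaneously trap the three lowest rescaled eigenvalues of two separate families while keeping a uniform gap from the fourth rescaled eigenvalue of $\mathcal{A}_\chi/|\chi|^2$, all uniformly in $\chi \in [-\mu,\mu]^3 \setminus \{0\}$. The analytic inputs are already provided by Proposition \ref{prop:Rayleighestim} and \eqref{eqn:achihom_coer_bdd}; the nontrivial step is the joint tuning of $(m,M,\rho_0,\mu)$ described above, after which both bullet points in the lemma follow directly from the geometric properties of $\Gamma$.
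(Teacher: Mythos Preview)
Your argument is correct and is exactly the natural route: combine the min--max principle with the Rayleigh bounds of Proposition~\ref{prop:Rayleighestim} to trap $\frac{1}{|\chi|^2}\lambda_i^\chi$ ($i=1,2,3$) in a fixed interval $[m,M]\subset(0,\infty)$, use \eqref{eqn:achihom_coer_bdd} for the homogenized eigenvalues, and then choose a circle around $[m,M]$ while shrinking $\mu$ to push $\frac{1}{|\chi|^2}\lambda_4^\chi$ out to the right. The paper does not give its own proof here; it simply imports the result as \cite[Lemma~6.1]{simplified_method}, and the argument there is precisely the one you outline.

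One small arithmetic slip: with your circle of radius $(M-m)/2+\rho_0$ centered at $(m+M)/2$, the rightmost point of $\Gamma$ sits at $M+\rho_0$, so requiring $\frac{c_{\text{rayleigh}}}{3\mu^2}\ge M+\rho_0$ only places $\frac{1}{|\chi|^2}\lambda_4^\chi$ \emph{on or outside} $\Gamma$, not at distance $\rho_0$ from it. To secure the buffer in \eqref{eqn:contour_buffer} for $i=4$ you need $\frac{c_{\text{rayleigh}}}{3\mu^2}\ge M+2\rho_0$ (equivalently, take $\mu\le\sqrt{c_{\text{rayleigh}}/(3(M+2\rho_0))}$). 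This is a one-symbol fix and does not affect the structure of the proof.
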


See Figure \ref{fig:contour} for a picture of Lemma \ref{lem:contour_existence}.
\begin{figure}[t]
  \centering
  \includegraphics[page=1, clip, trim=4cm 4cm 5cm 5cm, width=0.90\textwidth]{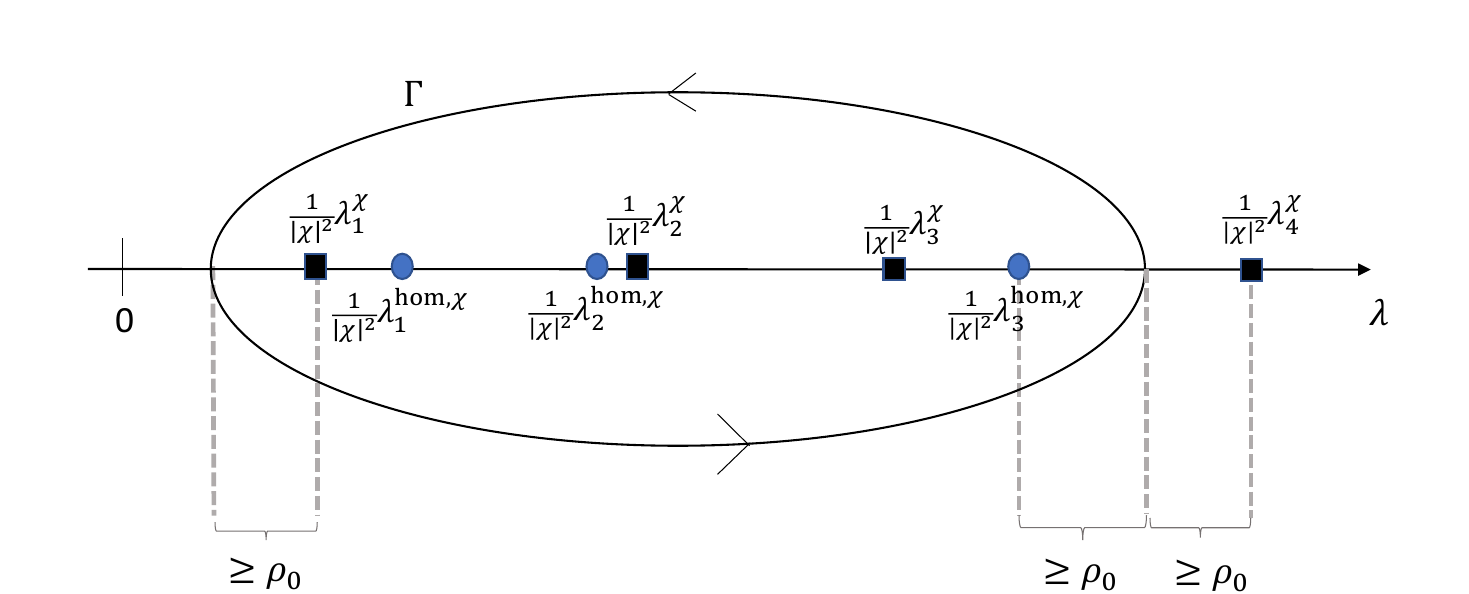}
  \caption{The contour $\Gamma$, for $\chi \in [-\mu,\mu]^3\setminus \{ 0 \}$. Taken from \cite[Figure 1]{simplified_method}.} \label{fig:contour}
\end{figure}
\begin{align}\label{eqn:constant_mu_smallfreq}
    \begin{split}
        &\textbf{We shall henceforth take the constants $\mu$, $\rho_0 >0$ and contour $\Gamma$ as provided by Lemma \ref{lem:contour_existence}.}
    \end{split}
\end{align}

\begin{remark}[Dependencies of $\mu$ and $\rho_0$] \label{rmk:mu_rho0_dependencies}
    Note that $\mu$ and $\rho_0$ depend only on the spectral properties of $\mathcal{A}_\chi$ (Proposition \ref{prop:Rayleighestim}) and $\mathcal{A}_\chi^{\hom}$ (see \eqref{eqn:Ahomcoercivity_bdd}). In particular, $\mu$ and $\rho_0$ depend only on $\nu$ and $C_\text{Fourier}$.
\end{remark}

In Section \ref{sect:fibrewise_estimates_working}, it will be convenient to introduce the following constants:
\begin{definition}[Constants depending on $z$] \label{defn:dhomchi_dchi}
    \begin{equation}\label{eqn:dist_to_spec}
        D_\chi^{\text{hom}}(z) := \text{dist} (z, \sigma(\tfrac{1}{|\chi|^2} \mathcal{A}_\chi^{\text{hom}} ) ), \qquad
        D_\chi(z) := \text{dist} (z, \sigma(\tfrac{1}{|\chi|^2} \mathcal{A}_\chi ) ).
    \end{equation}
\end{definition}

\begin{remark}\label{rmk:z_on_contour_nice}
    If $z \in \Gamma$, then $D_\chi^{\text{hom}}(z)$ and $D_\chi(z)$ are bounded away from zero, uniformly in $\chi$ and $z$.
\end{remark}



\subsection{Consequence 2: A candidate for ``well-prepared" data.}\label{sect:well_prepared_data}

As remarked in Section \ref{sect:introduction}, to go beyond the basic homogenization results \cite[Theorem 2.15]{simplified_method}, namely $L^2 \rightarrow L^2$ estimates with $\mathcal{O}(\eps)$ and $\mathcal{O}(\eps^2)$ error, and $L^2 \rightarrow H^1$ estimate of $\mathcal{O}(\eps)$ error, one is required to impose smoothness conditions on the data $\vect f$\footnote{or on the coefficient tensor $\A(y)$, which we shall avoid.}. In this paper, we do not aim for optimal conditions that give an $\mathcal{O}(\eps^n)$ error. Rather, we give conditions that are most natural from the spectral perspective. This shall be done in two steps. 

\paragraph*{Step 1.} First, we introduce the smoothing operator $\Xi_\eps$.

\begin{definition}\label{defn:smoothing_operator}
    For $\eps>0$, the smoothing operator $\Xi_\eps : L^2(\R^3;\C^3) \rightarrow L^2(\R^3;\C^3)$ is defined as follows:
    \begin{align}
        \Xi_\eps \vect f = \mathcal{G}_\eps^\ast \left( \int_{Y'}^{\oplus} P_0 \, d\chi \right) \mathcal{G}_\eps \vect f,
    \end{align}
    In other words, $\mathcal{G}_\eps \Xi_\eps \vect f = \int_Y (\mathcal{G}\vect f)(y,\cdot) dy$, which is a function in $y \in Y$ and $\chi \in Y'$, that is constant in $y$. 
\end{definition}

Since $\Xi_\eps$ is constructed from spectral projections of $\mathcal{A}_\chi$, it is a natural object from the spectral perspective. However, it should be noted that $\Xi_\eps$ is also closely connected to the Fourier transform.
To be precise, we note from \cite[Section 4.2]{simplified_method} the following identity.
\begin{proposition}
    [Smoothing operator is a Fourier cutoff at a large cube] \label{prop:smoothing_vs_fourier}
    For $\vect f \in L^2(\R^3;\C^3)$, we have
    \begin{align}
        \Xi_\eps \vect f (x)
        = \int_{(2\pi\eps)^{-1} Y'} \mathcal{F} (\vect f) (\widetilde{\theta}) e^{i2\pi\widetilde{\theta} \cdot x} d\widetilde{\theta}
        = \left( \mathcal{F}^{-1}(\mathbf{1}_{(2\pi\eps)^{-1}Y'}) \ast \vect f  \right)(x).
    \end{align}
    In particular, $\vect f \in L^2$ has compact Fourier support if and only if $\Xi_\eps \vect f = \vect f$, whenever $\eps>0$ is small enough.
\end{proposition}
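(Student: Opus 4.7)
My plan is to prove the two stated formulas by a direct computation, unfolding the Gelfand transform applied to $\vect f$, composing with $P_0$, and then inverting. The conclusion about compact Fourier support will then be immediate from the first equality.

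\textbf{Step 1: Unfolding $P_0 \mathcal{G}_\eps \vect f$ into the Fourier transform of $\vect f$.} By Remark \ref{rmk:proj_0_is_integral_in_y}, $P_0$ is integration over $Y$. Plugging in the definition \eqref{eqn:gelfand_transform}, I would compute
\begin{align*}
    (P_0 \mathcal{G}_\eps \vect f)(\chi)
    = \left(\tfrac{\eps}{2\pi}\right)^{3/2} \sum_{n \in \Z^3} \int_Y e^{-i\chi \cdot (y+n)} \vect f(\eps(y+n)) \, dy.
\end{align*}
The key trick is to collapse the sum over $n \in \Z^3$ together with the integral over the unit cell into a single integral over $\R^3$ via the substitution $z = y+n$, followed by $w = \eps z$. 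This yields
\begin{align*}
    (P_0 \mathcal{G}_\eps \vect f)(\chi)
    = \frac{1}{(2\pi)^{3/2}\eps^{3/2}} \int_{\R^3} e^{-i \chi \cdot w/\eps} \vect f(w) \, dw
    = \frac{1}{(2\pi)^{3/2}\eps^{3/2}} \, \mathcal{F}(\vect f)\!\left( \tfrac{\chi}{2\pi\eps} \right),
\end{align*}
where the last equality matches the paper's Fourier convention $\mathcal{F}(\vect f)(\theta)= \int_{\R^3}\vect f(y)e^{-2\pi i \theta \cdot y}\,dy$.

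\textbf{Step 2: Applying $\mathcal{G}_\eps^*$ and rescaling.} Since $P_0\mathcal{G}_\eps\vect f$ is constant in $y$, its $\Z^3$-periodic extension in $y$ is itself, and the inversion formula \eqref{eqn:gelfand_inversion} gives an integral in $\chi$ over $Y'=[-\pi,\pi)^3$ of the above quantity multiplied by the appropriate phase. Combining this with Step 1 and then performing the dilation $\widetilde{\theta} = \chi/(2\pi\eps)$, the cube $Y'$ maps precisely to $(2\pi\eps)^{-1}Y'$ and the Jacobian $(2\pi\eps)^3$ cancels the remaining prefactors, producing
\begin{align*}
    \Xi_\eps \vect f(x) = \int_{(2\pi\eps)^{-1}Y'} \mathcal{F}(\vect f)(\widetilde\theta) \, e^{2\pi i\widetilde\theta\cdot x} \, d\widetilde\theta,
\end{align*}
which is the first asserted identity. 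Recognizing the right-hand side as $\mathcal{F}^{-1}\!\left( \mathbf{1}_{(2\pi\eps)^{-1}Y'} \mathcal{F}(\vect f) \right)(x)$ and applying the convolution theorem immediately gives the second identity.

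\textbf{Step 3: The compact-support characterization.} If $\mathrm{supp}\,\mathcal{F}(\vect f) \subset K$ for some compact $K \subset \R^3$, then for all $\eps$ small enough we have $K \subset (2\pi\eps)^{-1}Y'$, so the indicator is identically $1$ on the support of $\mathcal{F}(\vect f)$, and the first identity collapses to $\mathcal{F}^{-1}\mathcal{F}\vect f = \vect f$. Conversely, if $\Xi_\eps \vect f = \vect f$ for some (hence all sufficiently small) $\eps>0$, then $\mathcal{F}(\vect f)=\mathbf{1}_{(2\pi\eps)^{-1}Y'}\mathcal{F}(\vect f)$, so $\mathcal{F}(\vect f)$ vanishes outside the compact cube $(2\pi\eps)^{-1}Y'$.

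I do not anticipate any genuine obstacle here: the content is entirely bookkeeping. The only point requiring care is the unfolding in Step 1, where one must correctly identify $\sum_{n\in\Z^3}\int_Y$ with $\int_{\R^3}$ for the measurable integrand (which holds for $\vect f \in L^2 \cap L^1$ by Fubini and extends to $L^2$ by a density argument, since both sides of the claimed identity are bounded operators on $L^2$), and the substitution $\widetilde\theta = \chi/(2\pi\eps)$ in Step 2, which must be tracked through the prefactors to see that everything telescopes cleanly.
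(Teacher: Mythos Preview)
Your proof is correct and is precisely the direct computation one would expect. The paper itself does not give a self-contained proof of this proposition; it simply cites \cite[Section 4.2]{simplified_method}. Your Step 1 in fact reproduces exactly the identity \eqref{eqn:simplified_mthd_fourier_gelfand_identity} (quoted later in the paper from \cite[Lemma C.1]{simplified_method}), and Steps 2--3 follow by the same change of variables $\widetilde\theta=\chi/(2\pi\eps)$ that the paper uses in the proof of Proposition \ref{prop:compact_fourier_support_properties}. So your approach is essentially identical to the cited argument.
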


It is thus clear from Proposition \ref{prop:smoothing_vs_fourier} and Definition \ref{defn:smoothing_operator}, that if $\vect f$ has compact Fourier support, then $(\mathcal{G}_\eps \vect f)(y,\chi)$ is constant in $y$ for $\eps$ small enough. But for our purposes, we need the following refinement:

\begin{proposition}\label{prop:compact_fourier_support_properties}
    Suppose that $\vect f \in L^2(\R^3;\C^3)$ have compact Fourier support $K = \supp{(\mathcal{F}(\vect f))}$, then there exist a constant $c_{\text{supp}}(K) > 0$, and for any $0<\mu\leq \pi$, another constant $\eps_0 (\mu, K) >0$ such that whenever $0<\eps<\eps_0$,
    
    \begin{enumerate}[label=(\roman*)]
        \item its rescaled Gelfand transform $\mathcal{G}_\eps \vect f$ is constant in $y$, and
        
        
        \item the $\chi-$support of $\mathcal{G}_\eps \vect f$ is contained in $c_{\text{supp}}[-\eps,\eps]^3$, which is contained in $[-\mu,\mu]^3$.
    \end{enumerate}
\end{proposition}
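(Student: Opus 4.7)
The natural route is to obtain, from the Gelfand transform definition \eqref{eqn:gelfand_transform}, an explicit identity expressing $\mathcal{G}_\eps \vect f$ in terms of $\mathcal{F}(\vect f)$. Specifically, I expect
\begin{align*}
    (\mathcal{G}_\eps \vect f)(y,\chi) = (2\pi\eps)^{-3/2} \sum_{k \in \Z^3} \mathcal{F}(\vect f)\!\left(\frac{\chi + 2\pi k}{2\pi\eps}\right) e^{2\pi i k \cdot y},
\end{align*}
from which both (i) and (ii) fall out by inspection. To derive it, I would substitute the Fourier representation $\vect f(\eps(y+n)) = \int_{\R^3} \mathcal{F}(\vect f)(\theta) e^{2\pi i\theta\cdot \eps(y+n)}\, d\theta$ into \eqref{eqn:gelfand_transform}, swap sum and integral, and apply the Poisson summation identity $\sum_{n \in \Z^3} e^{i\alpha\cdot n} = (2\pi)^3 \sum_{k \in \Z^3} \delta(\alpha - 2\pi k)$ to the resulting lattice sum in $n$.

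Compactness of $K$ then knocks out all but the $k=0$ term for small $\eps$. Fix $R = R(K) > 0$ with $K \subset [-R,R]^3$. For any $\chi \in Y' = [-\pi,\pi)^3$ and nonzero $k \in \Z^3$, at least one coordinate of $\chi + 2\pi k$ has absolute value at least $\pi$, so the corresponding coordinate of $(\chi + 2\pi k)/(2\pi\eps)$ has absolute value at least $1/(2\eps)$; once $\eps < 1/(2R)$ this exceeds $R$, placing the argument outside $K$ and killing the $\mathcal{F}(\vect f)$ factor. The surviving $k=0$ contribution reduces the identity to $(\mathcal{G}_\eps \vect f)(y,\chi) = (2\pi\eps)^{-3/2}\mathcal{F}(\vect f)(\chi/(2\pi\eps))$, which is manifestly independent of $y$ (yielding (i), in agreement with Proposition \ref{prop:smoothing_vs_fourier}) and nonzero only for $\chi/(2\pi\eps) \in K$, i.e.\ $\chi \in 2\pi\eps K \subset (2\pi R)[-\eps,\eps]^3$. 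This identifies $c_\text{supp} := 2\pi R$ and gives the first inclusion in (ii).

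For the second inclusion $c_\text{supp}[-\eps,\eps]^3 \subset [-\mu,\mu]^3$, the only requirement is $c_\text{supp}\eps \leq \mu$; setting $\eps_0(\mu, K) := \mu/c_\text{supp}$ handles this, and since $\mu \leq \pi$ one automatically has $\eps_0 \leq 1/(2R)$, so the condition needed to kill the $k \neq 0$ terms in the previous step is built in. I do not anticipate any genuine obstacle: the Poisson summation step is formal on individual exponentials but perfectly rigorous once interpreted distributionally or via an $L^2$ density argument. A slightly cleaner alternative, if one prefers to bypass Poisson summation altogether, is to invoke Proposition \ref{prop:smoothing_vs_fourier} to get (i) directly, and then recover the explicit formula for $(\mathcal{G}_\eps \vect f)(y,\chi)$ by applying the change of variables $\widetilde\theta = \chi/(2\pi\eps)$ to the Fourier-cutoff representation of $\Xi_\eps \vect f = \vect f$.
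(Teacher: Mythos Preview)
Your proposal is correct. The Poisson-summation route you outline is sound: the identity
\[
(\mathcal{G}_\eps \vect f)(y,\chi) = (2\pi\eps)^{-3/2} \sum_{k \in \Z^3} \mathcal{F}(\vect f)\!\left(\frac{\chi + 2\pi k}{2\pi\eps}\right) e^{2\pi i k \cdot y}
\]
is nothing more than the Fourier series in $y$ of $(\mathcal{G}_\eps \vect f)(\cdot,\chi)$, so its validity in $L^2(Y\times Y')$ needs no distributional acrobatics. Your elimination of the $k\neq 0$ terms and the bookkeeping for $c_{\text{supp}}$ and $\eps_0$ are accurate, including the observation that $\mu\leq\pi$ automatically enforces $\eps_0\leq 1/(2R)$.

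The paper takes a slightly different path: rather than expanding $\mathcal{G}_\eps \vect f$ into a Fourier series in $y$, it starts from the Fourier inversion formula for $\vect f$, rewrites it via the change of variables $\widetilde{\theta}=\chi/(2\pi\eps)$ so that it matches the shape of the inverse Gelfand transform $\mathcal{G}_\eps^\ast$, and then invokes the identity $(2\pi\eps)^{-3/2}\mathcal{F}(\vect u)(\theta/(2\pi)) = \int_Y (\mathcal{G}_\eps \vect u)(y,\eps\theta)\,dy$ (quoted from \cite[Lemma~C.1]{simplified_method}) to conclude. This is essentially the ``cleaner alternative'' you sketch in your final sentence. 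Your primary argument is more self-contained (no appeal to an external lemma) and makes the role of the higher lattice modes $k\neq 0$ explicit, while the paper's version trades that transparency for brevity by leaning on the pre-established Fourier--Gelfand identity.
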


We refer the reader to Figure \ref{fig:frequency_regions} for a picture of Proposition \ref{prop:compact_fourier_support_properties}(ii). The key observation is as follows: if $\supp{(\mathcal{F}(\vect f))}$ is compact, then upon rescaling, it is supported in a region of size $O(\eps)$. Together with the scaling properties of the spectrum of $\mathcal{A}_\eps$ near the bottom (Corollary \ref{cor:eigenvalue_sizes}), we arrive at the heuristic $|\chi| \sim \eps$ connecting the two length scales.

\begin{figure}[t]
  \centering
  \includegraphics[page=1, clip, trim=3.5cm 4cm 6.5cm 2.8cm, width=0.80\textwidth]{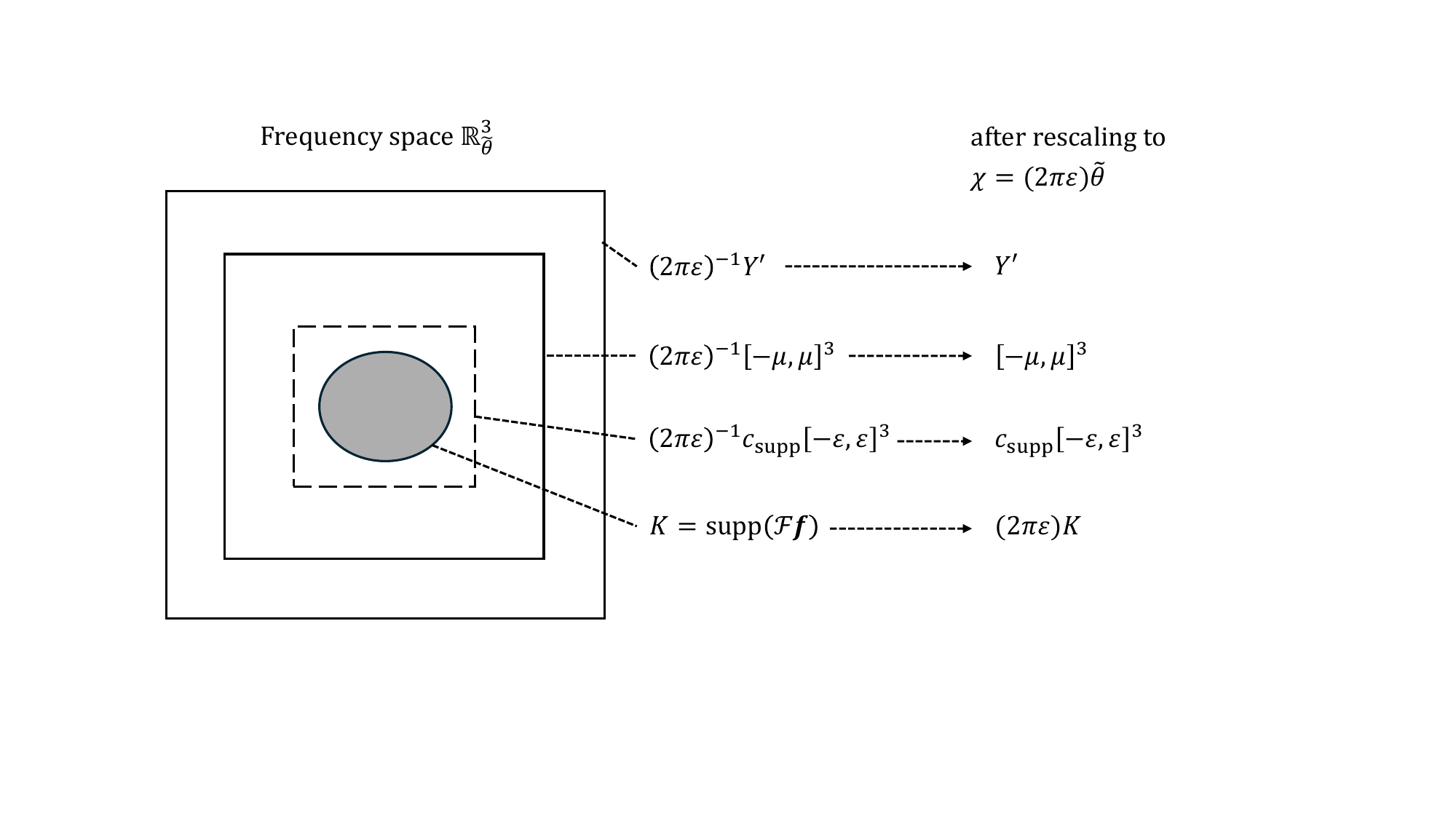} 
  \caption{Key regions in the frequency space, before and after rescaling.} \label{fig:frequency_regions}
\end{figure}

\begin{remark}
    If $\supp{(\mathcal{F}(\vect f))}$ is not compact, then heuristic $|\chi| \sim \eps$ may still be justified. However, the error estimates would be capped at $\mathcal{O}(\eps^2)$. See for instance, the proof of \cite[Theorem 6.6]{simplified_method}.
\end{remark}

We included the $\mathcal{O}(1)$ region $[-\mu,\mu]^3$ because $\mu$ shall be chosen in a way such that these are the range of $\chi$'s where the spectrum of $\mathcal{A}_\chi$ behaves ``nicely". (See Lemma \ref{lem:contour_existence} and Figure \ref{fig:contour}.)

\begin{proof}[Proof of Proposition \ref{prop:compact_fourier_support_properties}]
    Let $\vect f \in L^2(\R^3;\C^3)$. By assumption, $K = \supp(\mathcal{F} (\vect f))$ is a compact subset of $\R^3$ (in the frequency space). Hence, there exist a constant $c_{\text{supp}}(K)>0$ such that
    \begin{align}\label{eqn:f_support_containment1}
        K \subset (2\pi \eps)^{-1} c_{\text{supp}}[-\eps,\eps]^3 = (2\pi)^{-1} c_{\text{supp}}[-1,1]^3.
    \end{align}
    
    Moreover, for any $0<\mu \leq \pi$, there exist $\eps_0(\mu,c_\text{supp}) > 0$ such that
    \begin{align}
        K 
        \stackrel{\eqref{eqn:f_support_containment1}}{\subset} (2\pi \eps)^{-1} c_{\text{supp}}[-\eps,\eps]^3
        \subset (2\pi \eps)^{-1} [-\mu,\mu]^3 \subset (2\pi \eps)^{-1} Y', 
        \qquad \text{whenever $0<\eps<\eps_0$.} \label{eqn:f_support_containment2}
    \end{align}
    
    Henceforth, we shall take $0 < \eps < \eps_0$. Then,
    \begin{alignat}{3}
        \vect f(x)
        &= \int_{(2\pi\eps)^{-1} Y'} \mathbf{1}_K (\widetilde{\theta}) \mathcal{F}(\vect f) (\widetilde{\theta}) e^{2\pi i \widetilde{\theta} \cdot x} \, d\widetilde{\theta}
        &&\text{Fourier inversion.} \label{eqn:fourier_inversion_compact_support}\\
        &=
        \left( \frac{1}{2\pi \eps} \right)^3 \int_{Y'} \mathbf{1}_K \left( \frac{\eps^{-1}\chi}{2\pi} \right) \mathcal{F}(\vect f) \left( \frac{\eps^{-1}\chi}{2\pi} \right) e^{i \frac{\chi}{\eps} \cdot x} \, d\chi \qquad 
        &&\text{Set $\widetilde{\theta} = \tfrac{\chi}{2\pi \eps}$.}\\
        &=\left( \frac{1}{2\pi \eps} \right)^{3/2} \int_{Y'} \left( \frac{1}{2\pi \eps} \right)^{3/2} \mathbf{1}_K \left( \frac{\eps^{-1}\chi}{2\pi} \right) \mathcal{F}(\vect f) \left( \frac{\eps^{-1}\chi}{2\pi} \right) e^{i \chi \cdot \frac{x}{\eps}} \, d\chi \quad \\
        &= \mathcal{G}_\eps^\ast \left( \left( \frac{1}{2\pi\eps} \right)^{3/2} \mathbf{1}_K \left( \frac{\theta}{2\pi} \right) \mathcal{F}(\vect f) \left( \frac{\theta}{2\pi} \right) \right) (x)
        &&\text{Set $\theta = \eps^{-1} \chi$.} \label{eqn:smooth_f_fourier_cutoff}
    \end{alignat}

    Next, we recall from \cite[Lemma C.1]{simplified_method}, that for $\vect u \in L^2(\R^3;\C^3)$, we have the identity 
    \begin{align}\label{eqn:simplified_mthd_fourier_gelfand_identity}
        \left( \frac{1}{2\pi\eps} \right)^{3/2} \mathcal{F}(\vect u) \left( \frac{\theta}{2\pi} \right)
        = \int_Y (\mathcal{G}_\eps \vect u) (y, \eps\theta) \, dy, \qquad
        \text{for almost every $\theta \in \eps^{-1}Y'$.}
    \end{align}
    We view \eqref{eqn:simplified_mthd_fourier_gelfand_identity} as equality of functions in $y \in Y$ and $\theta \in \eps^{-1}Y'$, where the functions are constant in $y$. 

    Combining \eqref{eqn:smooth_f_fourier_cutoff} and \eqref{eqn:simplified_mthd_fourier_gelfand_identity}, we observe that
    \begin{align}
        \begin{split}
            (\mathcal{G}_\eps \vect f) (y,\chi)
            &\stackrel{\eqref{eqn:smooth_f_fourier_cutoff}}{=} \left( \frac{1}{2\pi\eps} \right)^{3/2} \mathbf{1}_K \left( \frac{\theta}{2\pi} \right) \mathcal{F}(\vect f) \left( \frac{\theta}{2\pi} \right) \\
            &\stackrel{\eqref{eqn:simplified_mthd_fourier_gelfand_identity}}{=} \mathbf{1}_K\left( \frac{\theta}{2\pi} \right) \int_Y (\mathcal{G}_\eps \vect f) (y,\eps\theta) \, dy
            \stackrel{\chi = \eps\theta}{=} \mathbf{1}_{(2\pi \eps) K} \left( \chi \right) \int_Y (\mathcal{G}_\eps \vect f) (y,\chi) \, dy. \label{eqn:smooth_f_fourier_cutoff2}
        \end{split}
    \end{align}
    Note that we have $\widetilde{\theta} \in K$ (see \eqref{eqn:fourier_inversion_compact_support}). Hence by \eqref{eqn:f_support_containment2} and $\widetilde{\theta} = \frac{\chi}{2\pi\eps}$, we have
    \begin{align}
        \chi \in (2\pi\eps) K 
        \subseteq c_\text{supp}[-\eps,\eps]^3 
        \subseteq [-\mu,\mu]^3
        \subseteq Y'.
    \end{align}
    In particular, we see from \eqref{eqn:smooth_f_fourier_cutoff2} that $\mathcal{G}_\eps \vect f$ is constant in $y$, and vanishes outside $\chi \in c_\text{supp} [-\eps,\eps]^3$.
\end{proof}

\paragraph*{Step 2.} We introduce the Bloch approximation operator $\widehat{\Xi}_{\eps,\mu}$. To prepare for this, recall the constant $\mu$ (\ref{eqn:constant_mu_smallfreq}) and the projection $P_\chi$ onto the lowest three eigenspaces of $\mathcal{A}_\chi$ (Definition \ref{defn:proj_achi_eigenspace}). We have the following:

\begin{lemma}\label{lem:measurability}
    The mapping $[-\mu,\mu]^3 \ni \chi \mapsto P_\chi$ is measurable.
\end{lemma}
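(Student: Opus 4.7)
The plan is to show that $\chi \mapsto P_\chi$ is actually \emph{continuous} on $[-\mu,\mu]^3$ (which is stronger than what we need). The core idea is to realize $P_\chi$ as a Riesz projection via a single fixed contour, and then use continuity of the resolvent in $\chi$.

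First I would isolate a fixed contour enclosing the three smallest eigenvalues of $\mathcal{A}_\chi$ uniformly in $\chi$. By Proposition \ref{prop:Rayleighestim} combined with the min-max principle, for every $\chi \in [-\mu,\mu]^3$ we have $\lambda_3^\chi \leq C_\text{rayleigh} |\chi|^2 \leq C_\text{rayleigh} \mu^2$ while $\lambda_4^\chi \geq c_\text{rayleigh}$; shrinking $\mu$ if necessary so that $C_\text{rayleigh}\mu^2 < c_\text{rayleigh}/2$, we may pick any $c \in (C_\text{rayleigh}\mu^2, c_\text{rayleigh})$ and a fixed closed contour $\widetilde{\Gamma} \subset \C$ surrounding $[0,c]$ at a positive distance from $[c_\text{rayleigh},\infty)$. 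By construction, $\widetilde{\Gamma}$ encloses exactly $\lambda_1^\chi, \lambda_2^\chi, \lambda_3^\chi$ (counted with multiplicity) for every $\chi \in [-\mu,\mu]^3$, and stays at distance $\geq \delta > 0$ from $\sigma(\mathcal{A}_\chi)$ for a uniform $\delta$. Hence the Riesz formula gives
\begin{equation*}
    P_\chi = -\frac{1}{2\pi i} \oint_{\widetilde{\Gamma}} (\mathcal{A}_\chi - z I)^{-1} \, dz, \qquad \chi \in [-\mu,\mu]^3.
\end{equation*}

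Next I would establish norm continuity of $\chi \mapsto (\mathcal{A}_\chi - zI)^{-1}$ uniformly for $z \in \widetilde{\Gamma}$. The form difference is easily controlled: expanding $(\simgrad + iX_\chi)\vect u : \overline{(\simgrad + iX_\chi)\vect v} - (\simgrad + iX_{\chi_0})\vect u : \overline{(\simgrad + iX_{\chi_0})\vect v}$ yields cross terms involving $X_\chi - X_{\chi_0}$, which by Lemma on $X_\chi$ (inequality \eqref{eqn:Xchi_est}) satisfy $\|(X_\chi - X_{\chi_0})\vect u\|_{L^2} \leq |\chi - \chi_0| \, \|\vect u\|_{L^2}$. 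Combined with the $L^\infty$ bound on $\mathbb{A}$ this yields
\begin{equation*}
    |a_\chi(\vect u,\vect v) - a_{\chi_0}(\vect u,\vect v)| \leq C \, |\chi - \chi_0| \, \|\vect u\|_{H^1_\#} \|\vect v\|_{H^1_\#}, \qquad \vect u, \vect v \in H^1_\#(Y;\C^3).
\end{equation*}
Feeding this estimate into the second resolvent identity (in its sesquilinear form version)
\begin{equation*}
    \bigl\langle \bigl[ (\mathcal{A}_\chi - zI)^{-1} - (\mathcal{A}_{\chi_0} - zI)^{-1} \bigr] \vect f, \vect g \bigr\rangle = (a_{\chi_0} - a_\chi) \bigl( (\mathcal{A}_\chi - \overline z I)^{-1}\vect f, \, (\mathcal{A}_{\chi_0} - zI)^{-1}\vect g \bigr),
\end{equation*}
together with the uniform $H^1_\#$ bound on the resolvents (the latter because $z$ is at distance $\geq \delta$ from the spectra, combined with coercivity of $a_\chi$ following from Proposition \ref{prop:Rayleighestim}), gives
\begin{equation*}
    \bigl\| (\mathcal{A}_\chi - zI)^{-1} - (\mathcal{A}_{\chi_0} - zI)^{-1} \bigr\|_{L^2 \to L^2} \leq C_\delta \, |\chi - \chi_0|,
\end{equation*}
uniformly for $z \in \widetilde{\Gamma}$.

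Integrating over the fixed compact contour $\widetilde{\Gamma}$ then yields continuity of $\chi \mapsto P_\chi$, and in particular measurability, as required. The main obstacle is the norm continuity of the resolvent: although the form perturbation $a_\chi - a_{\chi_0}$ is not bounded as a perturbation on $L^2$, it is bounded on the form domain $H^1_\#$, and one must be careful to exploit the uniform $H^1_\#$-bound of the resolvents (valid because $\widetilde{\Gamma}$ stays away from the spectra and $a_\chi$ is coercive up to a shift uniform in $\chi \in [-\mu,\mu]^3$) rather than working directly with the unbounded operator difference.
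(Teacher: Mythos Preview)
Your argument is correct and in fact establishes the stronger statement that $\chi \mapsto P_\chi$ is norm-continuous on $[-\mu,\mu]^3$. Two minor points: the phrase ``shrinking $\mu$ if necessary'' is unnecessary, since the $\mu$ fixed by Lemma~\ref{lem:contour_existence} already guarantees the uniform gap $\lambda_3^\chi \leq 3C_{\text{rayleigh}}\mu^2 < c_{\text{rayleigh}} \leq \lambda_4^\chi$ (this is exactly the separation underlying that lemma); and in your second resolvent identity the $\bar z$ belongs on the second argument rather than the first, though this does not affect the estimate.

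The paper proceeds differently: it observes that the spectral gap allows one to write $P_\chi = \mathbf{1}_{[0,C]}(\mathcal{A}_\chi)$ for a single constant $C>0$ independent of $\chi \in [-\mu,\mu]^3$, and then invokes a general black-box result (Carmona--Lacroix) asserting that spectral projections of measurable self-adjoint operator families are measurable. Your route is more self-contained, avoids the external reference, and yields continuity rather than mere measurability---potentially useful if one later wants regularity of objects built from $P_\chi$. The paper's route is shorter on the page but relies on a nontrivial abstract fact about direct integrals. Both begin from the same spectral-gap observation.
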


\begin{proof}
    For $\chi \in [-\mu,\mu]^3$, the first three rescaled eigenvalues $\tfrac{1}{|\chi|^2} \lambda_1^\chi$, $\tfrac{1}{|\chi|^2} \lambda_2^\chi$, $\tfrac{1}{|\chi|^2} \lambda_3^\chi$ of $\tfrac{1}{|\chi|^2} \mathcal{A}_\chi$ are enclosed by the contour $\Gamma$. The remaining eigenvalues $\tfrac{1}{|\chi|^2} \lambda_i^\chi$, $i\geq 4$, lie outside the contour (Lemma \ref{lem:contour_existence}). So there exists a constant $C > 0$ such that
    \begin{equation}
        P_\chi = \mathbf{1}_{[0, C]} \left( \mathcal{A}_\chi \right), \qquad \text{for $\chi \in [-\mu,\mu]^3$}.
    \end{equation}
    The conclusion now follows from the measurability of $\mathcal{A}_\chi$ in $\chi$ and \cite[Proposition V.1.2]{carmona_lacroix_book}.
\end{proof}

With Lemma \ref{lem:measurability} in mind, we can now make the following definition:

\begin{definition}\label{defn:bloch_approximation}
    Given $\eps>0$, the \textbf{Bloch approximation operator} $\widehat{\Xi}_{\eps,\mu} : L^2(\R^3;\C^3) \to L^2(\R^3;\C^3)$ is
    \begin{align}\label{eqn:bloch_approx}
        \widehat{\Xi}_{\eps,\mu} \vect f := \mathcal{G}_\eps^\ast \left( \int_{Y'}^\oplus \mathbf{1}_{[-\mu,\mu]^3} (\chi) P_\chi P_0 \, d\chi \right) \mathcal{G}_\eps \vect f.
    \end{align}
    We shall refer to the function $\vect f_\eps = \widehat{\Xi}_{\eps,\mu} \vect f$ as the \textbf{Bloch approximation/Bloch adapted function} (of $\vect f$), and $\range{(\widehat{\Xi}_{\eps,\mu})}$ as the \textbf{set of Bloch adapted functions}.
\end{definition}

The notion of \textbf{``well-prepared data"} that we shall adopt in this paper is as follows:

\begin{assumption}[Well-prepared data] \label{assump:rhs}
    $\vect f_\eps = \widehat{\Xi}_{\eps,\mu}\vect f$ is a Bloch adapted function of some $\vect f$ which has \newline compact Fourier support.
\end{assumption}

\begin{remark}[Alternate formulae] \label{rmk:rhs_alt_formula_blochapprox}
    Under Assumption \ref{assump:rhs}, the Bloch approximation $\widehat{\Xi}_{\eps,\mu} \vect f$ of $\vect f$ has several alternate expressions:
    \begin{alignat}{3}
        \widehat{\Xi}_{\eps,\mu} \vect f
        &= \mathcal{G}_\eps^\ast \left( \int_{c_\text{supp}[-\eps,\eps]^3}^\oplus P_\chi P_0 \, d\chi \right) \mathcal{G}_\eps \vect f
        \qquad\qquad
        &&\text{By Proposition \ref{prop:compact_fourier_support_properties}(ii), for $\eps$ small enough.} \label{eqn:bloch_approx_alt1}\\
        &= \mathcal{G}_\eps^\ast \left( \int_{[-\mu,\mu]^3}^\oplus P_\chi \, d\chi \right) \mathcal{G}_\eps \Xi_\eps \vect f
        &&\text{By Definition \ref{defn:smoothing_operator}.} \label{eqn:bloch_approx_alt2}
    \end{alignat}
    Since, $\supp(\mathcal{F}\vect f)$ is assumed to be compact, $P_0$ and $\Xi_\eps$ may be omitted from \eqref{eqn:bloch_approx_alt1} and \eqref{eqn:bloch_approx_alt2} respectively. However, we feel that it is more instructive to write \eqref{eqn:bloch_approx}-\eqref{eqn:bloch_approx_alt2}, as it emphasizes that we have taken an arbitrary $L^2$ function through two stages of ``smoothing": First by cutting off its support in the frequency space. Second by removing its contribution in the ``bulk" of the spectrum $\sigma(\mathcal{A}_\chi)$.     
\end{remark}

\begin{remark}[More alternate formulae] \label{rmk:more_alt_formulae_blochapprox}
    Moreover, for a general $\vect f \in L^2(\R^3;\C^3)$, we also have
    \begin{alignat}{3}
        \widehat{\Xi}_{\eps,\mu} \vect f (x)
        &= \sum_{i=1}^3 \int_{(2\pi\eps)^{-1}Y'} \left[ \mathcal{F} \vect f (\widetilde{\theta})
        \cdot \overline{\vect m_i \left( (2\pi\eps)\widetilde{\theta} \right)} \, \right]
        \vect \omega_i \left( \frac{x}{\eps}, (2\pi\eps) \widetilde{\theta} \right)
        e^{i2\pi \widetilde{\theta} \cdot x}  
        \, d\widetilde{\theta}.
        \label{eqn:bloch_approx_alt3}
    \end{alignat}
    where $\vect \omega_i(\cdot,\chi)$, denotes the eigenfunction corresponding to the $i$-th eigenvalue of $\mathcal{A}_\chi$, and the vector $\vect m_i(\chi)$ denotes the mean $\int_Y \vect \omega_i(y,\chi) \,dy$ (taken component-wise).
    
    To see this, fix $\chi = \eps\theta \in Y'$. Then we compute:
    \begin{alignat}{3}
        (P_\chi P_0 \mathcal{G}_\eps \vect f) (y,\eps\theta)
        &= \sum_{i=1}^3 \left( \int_Y (P_0\mathcal{G}_\eps \vect f)(y,\eps\theta) \cdot \overline{\vect \omega_i(y,\chi)} \, dy \right) \vect \omega_i (y,\chi) \\
        &= \sum_{i=1}^3 \frac{1}{(2\pi\eps)^{3/2}} \mathcal{F} \vect f \left( \frac{\theta}{2\pi} \right) \cdot \underbrace{\left( \int_Y \overline{\vect \omega_i(y,\eps\theta)} \, dy \right)}_{=\overline{\vect m_i(\eps\theta)}} \vect \omega_i (y,\eps \theta). \qquad
        &&\text{By \eqref{eqn:simplified_mthd_fourier_gelfand_identity}.} \label{eqn:compute_bloch_approx_step1}
    \end{alignat}
    Let us refer to the function in \eqref{eqn:compute_bloch_approx_step1} as $\vect g (y,\chi)$. Then,
    \begin{alignat}{3}
        (\mathcal{G}_\eps^\ast \vect g) (x)
        &= \sum_{i=1}^3 \frac{1}{(2\pi\eps)^{3/2}} \int_{Y'} \frac{1}{(2\pi\eps)^{3/2}} \mathcal{F} \vect f \left(\frac{\chi/\eps}{2\pi} \right) \cdot \overline{\vect m_i(\chi)} \vect \omega_i \left(\frac{x}{\eps},\chi \right) e^{i\chi \cdot \frac{x}{\eps}} \, d\chi \qquad \\
        &= \sum_{i=1}^3 \frac{1}{(2\pi\eps)^3} \int_{Y'} \mathcal{F} \vect f \left( \frac{\chi/\eps}{2\pi} \right) \cdot \overline{\vect m_i(\chi)} \vect \omega_i \left( \frac{x}{\eps}, \chi \right) e^{i \frac{\chi}{\eps} \cdot x}  \, d\chi \\
        &= \sum_{i=1}^3 \int_{(2\pi\eps)^{-1}Y'} \mathcal{F} \vect f (\widetilde{\theta}) \cdot \overline{\vect m_i\left( (2\pi\eps)\widetilde{\theta} \right)}
        \vect \omega_i \left( \frac{x}{\eps}, (2\pi\eps) \widetilde{\theta} \right) e^{i2\pi \widetilde{\theta} \cdot x} 
        \, d\widetilde{\theta},
        &&\text{Set $\widetilde{\theta} = \frac{\chi}{2\pi\eps}$.}
    \end{alignat}
    as required.
\end{remark}

\newpage
\section{Fibre-wise asymptotics} \label{sect:fibrewise_asmptotics}
Fix $\chi \in Y'\setminus \{ 0 \}$, $z \in \rho(\tfrac{1}{|\chi|^2}\mathcal{A}_\chi) \cap \rho(\tfrac{1}{|\chi|^2}\mathcal{A}_\chi^{\hom} )$, and $\vect f \in L^2(Y;\C^3)$. The goal of this section is to describe the algorithm behind the iterative construction of the higher-order approximations of the unique solution $\vect u \in H_{\#}^1$ to the resolvent problem for $\frac{1}{|\chi|^2} \mathcal{A}_\chi$:
\begin{equation}\label{eqn:fiberwiseresolventproblem}
    \frac{1}{|\chi|^2} \int_{Y} \A \left( \simgrad + iX_\chi \right) \vect u  : \overline{\left( \simgrad + iX_\chi \right)  \vect v} -z \int_Y \vect u \cdot \overline{\vect v} = \int_Y \vect f \cdot \overline{\vect v} ,\quad \forall \, \vect v\in H_\#^1(Y;\C^3). 
\end{equation}
After providing the algorithm (Section \ref{sect:definition_of_algorithm}), we inductively prove the formulae for the terms in the asymptotic expansion together with the formulae for the error term. (Section \ref{sect:algebraic_step}). Next, we provide the estimates for the terms, as well as the  error estimates (Section \ref{sect:fibrewise_estimates_working}), thus justifying the heuristic guiding the Algorithm \ref{defn:algorithm}. We refer to the iterations of the algorithm as ``cycles". In each cycle, three new terms are constructed, improving the order of approximation in  $\chi$. We shall keep track of the dependence of the error estimates both on the spectral parameter $z$ and $\chi$.

\subsection{Heuristics of the inductive procedure}\label{sect:definition_of_algorithm}

The following is the definition of the algorithm used to construct the approximations. 



\begin{algorithm}[Asymptotic procedure] \label{defn:algorithm} 
    At \textbf{cycle $n \geq 0$},
    \begin{enumerate}[leftmargin=*, label=\textbf{Step \arabic*}.]
        \item Consider an expansion of $\vect u$ as follows:
        \begin{align}\label{eqn:u_expansion_cyclei}
            \vect u = \sum_{k=0}^n (\vect u_0^{(k)} + \vect u_1^{(k)} + \vect u_2^{(k)}) + \vect u_\text{error}^{(n)}.
        \end{align}

        \item Substitute \eqref{eqn:u_expansion_cyclei} into the resolvent equation \eqref{eqn:fiberwiseresolventproblem}  to get
        \begin{align}\label{eqn:resolvent_equation_alg}
            \begin{split}
                &\sum_{k=0}^n \sum_{j=0}^2 \frac{1}{|\chi|^2} \int_Y \A (\simgrad + iX_\chi) \left( \vect u_j^{(k)} + \vect u_\text{error}^{(n)} \right) : 
                \overline{(\simgrad + iX_\chi) \vect v}
                - z \int_Y \left( \vect u_j^{(k)} + \vect u_\text{error}^{(n)} \right) \cdot \overline{\vect v} \\
                &\qquad\qquad= \int_Y \vect f \cdot \overline{\vect v},
                \qquad \forall \vect v \in H_{\#}^1(Y;\C^3).
            \end{split}
        \end{align}

        \item \textbf{(For $n \geq 1$)} Using the equations for $\vect u_0^{(k)}$, $\vect u_1^{(k)}$, and $\vect u_2^{(k)}$ as defined in \textbf{Step 4} of cycles $k=0,\dots, n-1$, cancel out the terms in \eqref{eqn:resolvent_equation_alg} as appropriate.

        \item With the remaining terms in \eqref{eqn:resolvent_equation_alg}, collect like powers of $|\chi|$ according to the heuristic:
        \begin{quote}\label{power_counting_heuristic}
            $\vect u_j^{(k)} = O(|\chi|^{j+k})$. An instance of $iX_\chi$ counts for one order of $|\chi|$. Ignore test functions.
        \end{quote}
        Thus, for instance, $\int_Y \A \simgrad \vect u_1^{(1)} : \overline{iX_\chi \vect v}$ is treated as a term of order $|\chi|^3$.
        In particular, terms of order $|\chi|^{n-2}$, $|\chi|^{n-1}$, and $|\chi|^{n}$ give equations for $\vect u_0^{(n)}$, $\vect u_1^{(n)}$, and $\vect u_2^{(n)}$ respectively.

        \item Collect the leftover terms in \eqref{eqn:resolvent_equation_alg} to obtain the equation for $\vect u_\text{error}^{(n)}$.

        \item Proceed to the \textbf{next cycle, $n+1$}, beginning with \textbf{Step 1}.
    \end{enumerate}

\end{algorithm}

\subsection{Formulae for terms in the \texorpdfstring{$n$-th}{n-th} cycle}\label{sect:algebraic_step}
By applying Algorithm \ref{defn:algorithm}, we claim that we arrive at the following well-posed problems for the terms $\vect u_j^{(n)}$, $j=0,1,2$, $n\geq 0$.

\subsubsection{Cycle \texorpdfstring{$0$}{0}}
For $z \in \rho(\mathcal{A}_\chi^{\rm hom})$, $\chi \in Y'$, the following well-posed problems make up the cycle $0$ of the approximation:

\begin{equation}\label{eqn:cycle_i0}
    \begin{split}
        \int_{Y} \A  \simgrad   \vect u_0^{(0)}  : \overline{ \simgrad \vect v}  = & 0, \quad \forall\vect v \in H^{1}_\#(Y;\C^3), \\
        \int_{Y} \A  \simgrad   \vect u_1^{(0)}  : \overline{ \simgrad   \vect v}  = & - \int_{Y} \A  iX_\chi   \vect u_0^{(0)}  : \overline{ \simgrad   \vect v}, \quad \forall\vect v \in H^{1}_\#(Y;\C^3), \quad \text{ and } \quad \quad \int_Y \vect u_1^{(0)} = 0, \\
            \int_{Y} \A  \simgrad   \vect u_2^{(0)}  : \overline{ \simgrad   \vect v} = 
    & - \int_{Y} \A  iX_\chi    \vect u_1^{(0)}  : \overline{ \simgrad  \vect v} 
    - \int_{Y} \A  \simgrad   \vect u_1^{(0)}  : \overline{ i X_\chi   \vect v} - \int_{Y} \A  iX_\chi    \vect u_0^{(0)}  : \overline{ i X_\chi   \vect v} \\
    &+z |\chi|^2\int_Y \vect u_0^{(0)} \cdot\overline{\vect v} + |\chi|^2\int_Y \vect f  \cdot \overline{\vect v}, 
    \quad \forall\vect v \in H^{1}_\#(Y;\C^3), \quad \text{ and } \quad \int_Y \vect u_2^{(0)} = 0, \\
         \frac{1}{|\chi|^2}\left\langle \mathcal{A}_\chi^{\rm hom} \vect u_0^{(0)},\vect v_0 \right\rangle_{\C^3} & -z \int_Y \vect u_0^{(0)} \cdot \overline{\vect v_0} =  \int_Y \vect f \cdot \overline{\vect v_0}, 
    \quad \forall\vect v_0 \in \C^3.
    \end{split}
\end{equation}

\subsubsection{Cycle \texorpdfstring{$1$}{1}}

For $z \in \rho(\mathcal{A}_\chi^{\rm hom})$, $\chi \in Y'$, the following well-posed problems make up the cycle $1$ of the approximation:

\begin{equation}\label{eqn:cycle_i1}
    \begin{split}
        \int_{Y} \A  \simgrad   \vect u_0^{(1)}  : \overline{ \simgrad \vect v}  = & 0, \quad \forall\vect v \in H^{1}_\#(Y;\C^3), \\
        \int_{Y} \A  \simgrad   \vect u_1^{(1)}  : \overline{ \simgrad   \vect v}  = & - \int_{Y} \A  iX_\chi   \vect u_0^{(1)}  : \overline{ \simgrad   \vect v}, \quad \forall\vect v \in H^{1}_\#(Y;\C^3), \quad \text{ and } \quad \quad \int_Y \vect u_1^{(1)} = 0, \\
            \int_{Y} \A  \simgrad   \vect u_2^{(1)}  : \overline{ \simgrad   \vect v} = 
    & - \int_{Y} \A  iX_\chi  \left(  \vect u_1^{(1)} + u_2^{(0)} \right) : \overline{ \simgrad  \vect v} 
    - \int_{Y} \A  \simgrad   \left(  \vect u_1^{(1)} + u_2^{(0)} \right)  : \overline{ i X_\chi   \vect v} \\ & - \int_{Y} \A  iX_\chi  \left(  \vect u_0^{(1)} + \vect u_1^{(0)} \right)  : \overline{ i X_\chi   \vect v} +z |\chi|^2\int_Y \left(  \vect u_0^{(1)} + \vect u_1^{(0)} \right)\cdot\overline{\vect v}, \\ & 
    \quad \forall\vect v \in H^{1}_\#(Y;\C^3), \quad \text{ and } \quad \int_Y \vect u_2^{(1)} = 0, \\
         \frac{1}{|\chi|^2}\left\langle \mathcal{A}_\chi^{\rm hom} \vect u_0^{(1)},\vect v_0 \right\rangle_{\C^3} & -z \int_Y \vect u_0^{(1)} \cdot \overline{\vect v_0} =  -\frac{1}{|\chi|^2}\int_Y \A \left(\simgrad \vect u_2^{(0)} + iX_\chi \vect u_1^{(0)} \right): \overline{iX_\chi\vect v_0}, 
    \quad \forall\vect v_0 \in \C^3.
    \end{split}
\end{equation}

\subsubsection{Inductive step, cycle \texorpdfstring{$n$}{n}, \texorpdfstring{$n\geq 2$}{n geq 2}}

For $z \in \rho(\mathcal{A}_\chi^{\rm hom})$, $\chi \in Y'$, $n \geq 2$ the following well-posed problems make up the inductive step of the approximation:

\begin{equation}
\label{inductivestep}
    \begin{split}
        \int_{Y} \A  \simgrad   \vect u_0^{(n)}  : \overline{ \simgrad \vect v}  = & 0, \quad \forall\vect v \in H^{1}_\#(Y;\C^3), \\
        \int_{Y} \A  \simgrad   \vect u_1^{(n)} : \overline{ \simgrad   \vect v}  = & - \int_{Y} \A  iX_\chi   \vect u_0^{(n)}  : \overline{ \simgrad   \vect v}, \quad \forall\vect v \in H^{1}_\#(Y;\C^3), \quad \text{ and } \quad \quad \int_Y \vect u_1^{(n)} = 0, \\
            \int_{Y} \A  \simgrad   \vect u_2^{(n)}  : \overline{ \simgrad   \vect v} = 
    & - \int_{Y} \A  iX_\chi  \left(  \vect u_1^{(n)} + u_2^{(n-1)} \right) : \overline{ \simgrad  \vect v} 
    - \int_{Y} \A  \simgrad   \left(  \vect u_1^{(n)} + u_2^{(n-1)} \right)  : \overline{ i X_\chi   \vect v} \\ & - \int_{Y} \A  iX_\chi  \left(  \vect u_0^{(n)} + \vect u_1^{(n-1)}+ \vect u_2^{(n-2)} \right)  : \overline{ i X_\chi   \vect v} \\ & +z |\chi|^2\int_Y \left(  \vect u_0^{(n)} + \vect u_1^{(n-1)}+ \vect u_2^{(n-2)} \right)  \cdot\overline{\vect v},  
    \quad \forall\vect v \in H^{1}_\#(Y;\C^3),  \ \int_Y \vect u_2^{(n)} = 0, \\
         \frac{1}{|\chi|^2}\left\langle \mathcal{A}_\chi^{\rm hom} \vect u_0^{(n)},\vect v_0 \right\rangle_{\C^3} & -z \int_Y \vect u_0^{(n)} \cdot \overline{\vect v_0} =  -\frac{1}{|\chi|^2}\int_Y \A \left(\simgrad \vect u_2^{(n-1)} + iX_\chi \left(\vect u_1^{(n-1)} +\vect u_2^{(n-2)}\right)\right): \overline{iX_\chi\vect v_0}, 
     \\ & \forall\vect v_0 \in \C^3.
    \end{split}
\end{equation}
We shall also show that with the terms $\vect u_j^{(k)}$ as defined in \eqref{inductivestep}, the error term at the $n$-th cycle,
    \begin{equation}\label{eqn:errortermdefinition}
        \vect u_{\rm error}^{(n)} := \vect u - \sum_{k=0}^n \left(\vect u_0^{(k)} + \vect u_1^{(k)} + \vect u_2^{(k)} \right)
    \end{equation}
satisfies the following equation 
\begin{equation}\label{eqn:inductivestep_remainder_1}
        \frac{1}{|\chi|^2} \int_Y \A \left( \simgrad + iX_\chi \right) \vect u_{\rm error}^{(n)}  : \overline{\left( \simgrad + iX_\chi \right) \vect v} 
        -z \int_Y \vect u_{\rm error}^{(n)} \cdot \overline{\vect v}
        = \frac{1}{|\chi|^2}\mathcal{R}_{\rm error}^{(n)}(\vect v) ,\quad \forall \vect v\in H^1(Y;\C^3), 
\end{equation}
where the functional $\mathcal{R}_{\rm error}^{(n)}$ is given by
\begin{equation}\label{eqn:inductivestep_remainder_2}
\begin{split}
    \mathcal{R}_{\rm error}^{(n)} (\vect v) := 
    &- \int_Y \A iX_\chi  \vect u_2^{(n-1)}  :  \overline{ iX_\chi  \vect v}
    - \int_Y \A iX_\chi  \vect u_1^{(n)}  :  \overline{ iX_\chi  \vect v}
    - \int_Y \A \simgrad  \vect u_2^{(n)}  :  \overline{ iX_\chi  \vect v} 
    - \int_Y \A iX_\chi  \vect u_2^{(n)}  :  \overline{ \simgrad  \vect v} \\
    &- \int_Y \A iX_\chi  \vect u_2^{(n)}  :  \overline{ iX_\chi  \vect v}
    +z |\chi|^2\int_Y \vect u_2^{(n-1)} \cdot \overline{\vect v} 
    +z |\chi|^2\int_Y \vect u_1^{(n)} \cdot \overline{\vect v} 
    +z |\chi|^2\int_Y \vect u_2^{(n)} \cdot \overline{\vect v}.
\end{split}
\end{equation}
%
%








\begin{theorem}\label{thm:ncycle_formulae}
    Consider the asymptotic procedure as described in Algorithm \ref{defn:algorithm}. Then $\vect u_0^{(0)}$, $\vect u_1^{(0)}$, $\vect u_2^{(0)}$ are uniquely determined by \eqref{eqn:cycle_i0}, $\vect u_0^{(1)}$, $\vect u_1^{(1)}$, $\vect u_2^{(1)}$ are uniquely determined by \eqref{eqn:cycle_i1}, and for $n\geq 2$, $\vect u_0^{(n)}$, $\vect u_1^{(n)}$, and $\vect u_2^{(n)}$ are defined by the well-posed equations \eqref{inductivestep}, and the error $\vect u_\text{error}^{(n)}$ satisfies \eqref{eqn:inductivestep_remainder_1}-\eqref{eqn:inductivestep_remainder_2}.
\end{theorem}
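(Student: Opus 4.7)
The plan is to proceed by induction on the cycle number $n$, verifying at each step three things: that Algorithm \ref{defn:algorithm} indeed produces the claimed weak formulations; that each equation at cycle $n$ is well-posed in its asserted space; and that the residual after all cancellations is exactly the functional $\mathcal{R}_{\rm error}^{(n)}$ in \eqref{eqn:inductivestep_remainder_2}.

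For the base case $n=0$, I would substitute the expansion \eqref{eqn:u_expansion_cyclei} into \eqref{eqn:fiberwiseresolventproblem}, expand $(\simgrad + iX_\chi)$ bilinearly, and sort the resulting terms according to the heuristic $\vect u_j^{(k)} = O(|\chi|^{j+k})$. The order $|\chi|^{-2}$ piece is the pure cell equation for $\vect u_0^{(0)}$, which by Korn's inequality and ellipticity of $\A$ forces $\vect u_0^{(0)} \in \C^3$. The order $|\chi|^{-1}$ terms give the standard corrector equation for $\vect u_1^{(0)}$, uniquely solvable in $\dot{H}_{\#}^1$ since its right-hand side annihilates constant test functions. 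The order $|\chi|^0$ terms give the equation for $\vect u_2^{(0)}$; testing it against an arbitrary constant $\vect v_0 \in \C^3$, using $\simgrad \vect v_0 = 0$ together with $\int_Y \vect u_1^{(0)}=0$, and recognizing $\int_Y \A(\simgrad \vect u_1^{(0)} + iX_\chi \vect u_0^{(0)}) : \overline{iX_\chi \vect v_0} = \langle \mathcal{A}_\chi^{\rm hom} \vect u_0^{(0)}, \vect v_0 \rangle_{\C^3}$ via Definition \ref{defn:hom_matrix}, one recovers precisely the scalar equation for $\vect u_0^{(0)}$ in the last line of \eqref{eqn:cycle_i0}. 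Since $z \in \rho(\tfrac{1}{|\chi|^2} \mathcal{A}_\chi^{\rm hom})$, this pins down the constant $\vect u_0^{(0)}$ uniquely and simultaneously supplies the compatibility condition that makes $\vect u_2^{(0)}$ solvable in $\dot{H}_{\#}^1$. Cycle $n=1$ is handled identically, the only novelty being that the leftover $\vect u_1^{(0)}, \vect u_2^{(0)}$ now produce the nontrivial forcing in the equation for $\vect u_0^{(1)}$.

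For the inductive step $n \geq 2$, I would assume cycles $0,\ldots,n-1$ have been established in the asserted form, substitute \eqref{eqn:u_expansion_cyclei} into \eqref{eqn:fiberwiseresolventproblem}, and use the weak formulations of all prior cycles to cancel every contribution of $\vect u_j^{(k)}$ with $k\leq n-1$ save the ``leftovers'' that straddle between cycles. Sorting the remainder by the heuristic order produces \eqref{inductivestep}. The main obstacle, where the algebra is delicate, is verifying the solvability condition for the $\vect u_2^{(n)}$ problem: testing against $\vect v_0 \in \C^3$, simplifying by means of $\int_Y \vect u_j^{(k)} = 0$ for $j\in\{1,2\}$, and identifying $\vect u_1^{(n)}$ as the corrector $\vect u_{\vect c}$ of Definition \ref{defn:hom_matrix} with $\vect c = \vect u_0^{(n)}$ (which holds because $\vect u_1^{(n)}$ satisfies \eqref{eqn:chi_dependent_cell_problem} with this choice of $\vect c$), one recovers exactly the scalar equation for $\vect u_0^{(n)}$ written in \eqref{inductivestep}. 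In other words, the fourth equation of each cycle is engineered to be precisely the compatibility condition of the third, and this matching must be checked explicitly. Once it is, standard Lax--Milgram and Fredholm arguments on $\dot{H}_{\#}^1$ yield well-posedness of $\vect u_1^{(n)}$ and $\vect u_2^{(n)}$, while $\vect u_0^{(n)}$ is solved uniquely in $\C^3$ because $z \in \rho(\tfrac{1}{|\chi|^2} \mathcal{A}_\chi^{\rm hom})$.

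Finally, the error equation \eqref{eqn:inductivestep_remainder_1}-\eqref{eqn:inductivestep_remainder_2} drops out as the residual once Steps 3 and 4 of Algorithm \ref{defn:algorithm} have been applied at cycle $n$: what remains in \eqref{eqn:resolvent_equation_alg} are precisely the mixed terms of heuristic order strictly above $|\chi|^n$ involving $\vect u_2^{(n-1)}, \vect u_1^{(n)}, \vect u_2^{(n)}$ paired with one or two factors of $iX_\chi$, together with their $z|\chi|^2$ mass companions. Collecting these yields $\mathcal{R}_{\rm error}^{(n)}$ in the stated form, and dividing through by $|\chi|^2$ produces the variational identity \eqref{eqn:inductivestep_remainder_1}.
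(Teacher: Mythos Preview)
Your proposal is correct and follows essentially the same inductive argument as the paper. The only minor difference is that the paper's inductive step explicitly folds the error equation \eqref{eqn:inductivestep_remainder_1}--\eqref{eqn:inductivestep_remainder_2} at cycle $n-1$ into the inductive hypothesis and uses it as a bookkeeping device: the leftover terms from cycles $\leq n-1$ are identified wholesale as $-\tfrac{1}{|\chi|^2}\mathcal{R}_{\rm error}^{(n-1)}(\vect v)$, which slightly streamlines the cancellation step, but the content is the same as yours.
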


The remainder of this section is devoted to the proof of Theorem \ref{thm:ncycle_formulae}, for $n\geq 2$. This will be done in an inductive manner, starting with the base case $n=2$. The cases $n=0$ and $n=1$ are covered in \cite{simplified_method}. 

\subsubsection{Cycle \texorpdfstring{$2$}{2}}\label{sect:basecase_formulae}

The resolvent equation reads:
\begin{align}
    \begin{split}
        &\frac{1}{|\chi|^2} \int_Y \A (\simgrad + iX_\chi) \left( \sum_{k=0}^2 (\vect u_0^{(k)} + \vect u_1^{(k)} + \vect u_2^{(k)} ) + \vect u_\text{error}^{(2)} \right) : 
        \overline{(\simgrad + iX_\chi) \vect v} \\
        &\qquad\qquad\qquad\qquad
        - z \int_Y \left( \sum_{k=0}^2 (\vect u_0^{(k)} + \vect u_1^{(k)} + \vect u_2^{(k)} ) + \vect u_\text{error}^{(2)} \right) \cdot \overline{\vect v}
        = \int_Y \vect f \cdot \overline{\vect v},
        \qquad \forall \vect v \in H_{\#}^1(Y;\C^3).
    \end{split}
\end{align}
Rearrange the above to get
\begin{align}\label{eqn:resolvent_i2}
    \begin{split}
        &\sum_{k=0}^2 \sum_{j=0}^2 
        \frac{1}{|\chi|^2} \int_Y \A \simgrad \vect u_j^{(k)} : \overline{\simgrad \vect v}
        + \frac{1}{|\chi|^2}  \int_Y \A \simgrad \vect u_j^{(k)} : \overline{iX_\chi \vect v}\\
        &\qquad+ \frac{1}{|\chi|^2}  \int_Y \A iX_\chi \vect u_j^{(k)} : \overline{\simgrad \vect v}
        + \frac{1}{|\chi|^2}  \int_Y \A iX_\chi \vect u_j^{(k)} : \overline{iX_\chi \vect v}
        - z \int_Y \vect u_j^{(k)} \cdot \overline{\vect v} \\
        &\qquad+ \frac{1}{|\chi|^2} \int_Y \A (\simgrad + iX_\chi) \vect u_\text{error}^{(2)} \cdot \overline{(\simgrad + iX_\chi) \vect v}
        - z \int_Y \vect u_\text{error}^{(2)} \cdot \overline{\vect v}
        = \int_Y \vect f \cdot \overline{\vect v}
    \end{split}
\end{align}

\noindent
Here and henceforth we shall omit the quantifier ``$\text{for all } \vect v ~\text{in}~ H_{\#}^1(Y;\C^3)$" to keep the expressions compact.

By the equations \eqref{eqn:cycle_i0} defining $\vect u_0^{(0)}$, $\vect u_1^{(0)}$, and $\vect u_2^{(0)}$, the resolvent equation \eqref{eqn:resolvent_i2} becomes
\begin{equation}\label{eqn:resolvent_i2_v2}
    \begin{split}
    \begin{alignedat}{3}
        & &~~\frac{1}{|\chi|^2} \int_Y \A iX_\chi \vect u_1^{(0)} : \overline{iX_\chi \vect v}
        - z \int_Y \vect u_1^{(0)} \cdot \overline{\vect v} \\
        &+ \frac{1}{|\chi|^2} \int_Y \A \simgrad \vect u_2^{(0)} : \overline{iX_\chi \vect v}
        + \frac{1}{|\chi|^2} \int_Y \A iX_\chi \vect u_2^{(0)} : \overline{\simgrad \vect v}
        &~+ \frac{1}{|\chi|^2} \int_Y \A iX_\chi \vect u_2^{(0)} : \overline{iX_\chi \vect v}
        - z \int_Y \vect u_2^{(0)} \cdot \overline{\vect v} \\
        &+\sum_{k=1}^2 \sum_{j=0}^2 
        \frac{1}{|\chi|^2} \int_Y \A \simgrad \vect u_j^{(k)} : \overline{\simgrad \vect v} \\
        &+ \frac{1}{|\chi|^2}  \int_Y \A \simgrad \vect u_j^{(k)} : \overline{iX_\chi \vect v}
        + \frac{1}{|\chi|^2}  \int_Y \A iX_\chi \vect u_j^{(k)} : \overline{\simgrad \vect v}
        &~+ \frac{1}{|\chi|^2}  \int_Y \A iX_\chi \vect u_j^{(k)} : \overline{iX_\chi \vect v}
        - z \int_Y \vect u_j^{(k)} \cdot \overline{\vect v} \\
        &+ \frac{1}{|\chi|^2} \int_Y \A (\simgrad + iX_\chi) \vect u_\text{error}^{(2)} \cdot \overline{(\simgrad + iX_\chi) \vect v}
        - z \int_Y \vect u_\text{error}^{(2)} \cdot \overline{\vect v}
        = 0 \span
    \end{alignedat}
    \end{split}
\end{equation}
We have arranged the above equation as follows: The first two lines consists of terms from cycle $0$. The next two lines consists of terms from cycles $n=1,2$. The final line involves $\vect u_\text{error}^{(2)}$ and the RHS.

By the equations \eqref{eqn:cycle_i1} defining $\vect u_0^{(1)}$, $\vect u_1^{(1)}$, and $\vect u_2^{(1)}$, the resolvent equation \eqref{eqn:resolvent_i2_v2} becomes

\begin{equation}\label{eqn:resolvent_i2_v3}
    \begin{split}
    \begin{alignedat}{3}
        & &~~\frac{1}{|\chi|^2} \int_Y \A iX_\chi \vect u_2^{(0)} : \overline{iX_\chi \vect v}
        - z \int_Y \vect u_2^{(0)} \cdot \overline{\vect v} \\
        & &~+\frac{1}{|\chi|^2} \int_Y \A iX_\chi \vect u_1^{(1)} : \overline{iX_\chi \vect v}
        - z \int_Y \vect u_1^{(1)} \cdot \overline{\vect v} \\
        &+ \frac{1}{|\chi|^2} \int_Y \A \simgrad \vect u_2^{(1)} : \overline{iX_\chi \vect v}
        + \frac{1}{|\chi|^2} \int_Y \A iX_\chi \vect u_2^{(1)} : \overline{\simgrad \vect v}
        &~+ \frac{1}{|\chi|^2} \int_Y \A iX_\chi \vect u_2^{(1)} : \overline{iX_\chi \vect v}
        - z \int_Y \vect u_2^{(1)} \cdot \overline{\vect v} \\
        &+\sum_{j=0}^2 
        \frac{1}{|\chi|^2} \int_Y \A \simgrad \vect u_j^{(2)} : \overline{\simgrad \vect v} \\
        &+ \frac{1}{|\chi|^2}  \int_Y \A \simgrad \vect u_j^{(2)} : \overline{iX_\chi \vect v}
        + \frac{1}{|\chi|^2}  \int_Y \A iX_\chi \vect u_j^{(2)} : \overline{\simgrad \vect v}
        &~+ \frac{1}{|\chi|^2}  \int_Y \A iX_\chi \vect u_j^{(2)} : \overline{iX_\chi \vect v}
        - z \int_Y \vect u_j^{(2)} \cdot \overline{\vect v} \\
        &+ \frac{1}{|\chi|^2} \int_Y \A (\simgrad + iX_\chi) \vect u_\text{error}^{(2)} \cdot \overline{(\simgrad + iX_\chi) \vect v}
        - z \int_Y \vect u_\text{error}^{(2)} \cdot \overline{\vect v}
        = 0 \span
    \end{alignedat}
    \end{split}
\end{equation}
The first line consist of terms from cycle $0$. The next two lines consists of terms from cycle $1$. The next two lines consists of terms from cycle $2$. The final line involves $\vect u_\text{error}^{(2)}$ and the RHS.

We now proceed to collect $O(1)$ and $O(|\chi|)$ terms, which only appears in the fourth and fifth lines.

Collecting the $O(1)$ terms gives us an equation for $\vect u_0^{(2)}$:
\begin{align}\label{eqn:u02}
    \frac{1}{|\chi|^2} \int_Y \A \simgrad \vect u_0^{(2)} : \overline{\simgrad \vect v} = 0.
\end{align}

Collecting the $O(|\chi|)$ terms gives us an equation for $\vect u_1^{(2)}$:
\begin{align}\label{eqn:u12}
    \frac{1}{|\chi|^2} \int_Y \A (\simgrad \vect u_1^{(2)} + iX_\chi \vect u_0^{(2)} ) : \overline{\simgrad \vect v} = 0.
\end{align}
(Note that the $O(|\chi|)$ term $\int_Y \A \simgrad \vect u_0^{(2)} : \overline{iX_\chi \vect v}$ is zero, as $\vect u_0^{(2)}$ is a constant, by \eqref{eqn:u02}.) This has a unique solution in the space $\dot{H}_{\#}(Y;\C^3)$.

Collecting the $O(|\chi|^2)$ terms gives us an equation for $\vect u_2^{(2)}$:
\begin{align}\label{eqn:u22}
    \begin{split}
        &\frac{1}{|\chi|^2} \int_Y \A \simgrad \vect u_2^{(2)} : \overline{\simgrad \vect v} \\
        &\quad = - \frac{1}{|\chi|^2} \int_Y \A \simgrad (\vect u_1^{(2)} + \vect u_2^{(1)}) : \overline{iX_\chi \vect v}
        - \frac{1}{|\chi^2} \int_Y \A iX_\chi (\vect u_1^{(2)} + \vect u_2^{(1)}) : \overline{\simgrad \vect v} \\
        &\qquad + \int_Y \A iX_\chi (\vect u_0^{(2)} + \vect u_1^{(1)} + \vect u_2^{(0)}) : \overline{iX_\chi \vect v}
        + z \int_Y (\vect u_0^{(2)} + \vect u_1^{(1)} + \vect u_2^{(0)}) \cdot \overline{\vect v}.
    \end{split}
\end{align}
\eqref{eqn:u22} has a unique solution in $\dot{H}_{\#}$ if and only if its RHS vanishes for all $\vect v \in \C^3$. By the definition of $\mathcal{A}_\chi^{\hom}$, this criterion can be written as
\begin{align}\label{eqn:u02_homogenized}
    \left\langle \left(\frac{1}{|\chi|^2} \mathcal{A}_\chi^{\hom} - zI_{\C^3} \right) \vect u_0^{(2)}, \vect v_0 \right\rangle_{\C^3} =
    -\frac{1}{|\chi|^2} \int_Y \A \left( \simgrad \vect u_2^{(1)} + iX_\chi (\vect u_1^{(1)} + \vect u_2^{(0)}) \right)  : \overline{iX_\chi \vect v_0}, \qquad
    \forall \vect v_0 \in \C^3.
\end{align}
Since $z \in \rho(\tfrac{1}{|\chi|^2} \mathcal{A}_\chi^{\hom} )$, the constant vector $\vect u_0^{(2)}$ is uniquely chosen by \eqref{eqn:u02_homogenized}. Consequently, $\vect u_1^{(2)}$ and $\vect u_2^{(2)}$ are uniquely determined by \eqref{eqn:u12} and \eqref{eqn:u22}.

Finally, we collect the leftover terms to get an equation for $\vect u_\text{error}^{(2)}$:
\begin{equation}\label{eqn:uerr2}
    \begin{split}
    \begin{alignedat}{3}
        &\frac{1}{|\chi|^2} \int_Y \A (\simgrad + iX_\chi) \vect u_\text{error}^{(2)} : \overline{(\simgrad + iX_\chi) \vect v }
        - z \int_Y \vect u_\text{error}^{(2)} \cdot \overline{\vect v}
        \span \\
        &= -\frac{1}{|\chi|^2} \int_Y \A iX_\chi \vect u_2^{(1)} : \overline{iX_\chi \vect v}
        + z \int_Y \vect u_2^{(1)} \cdot \overline{\vect v}
        & \\
        &- \frac{1}{|\chi|^2} \int_Y \A iX_\chi \vect u_1^{(2)} : \overline{iX_\chi \vect v}
        + z \int_Y \vect u_1^{(2)} \cdot \overline{\vect v} & \\
        &- \frac{1}{|\chi|^2} \int_Y \A iX_\chi \vect u_2^{(2)} : \overline{iX_\chi \vect v}
        + z \int_Y \vect u_2^{(2)} \cdot \overline{\vect v}
        &- \frac{1}{|\chi|^2} \int_Y \A \simgrad \vect u_2^{(2)} : \overline{iX_\chi \vect v}
        - \frac{1}{|\chi|^2} \int_Y \A iX_\chi \vect u_2^{(2)} : \overline{\simgrad \vect v} \\
        &=: \frac{1}{|\chi|^2} \mathcal{R}_\text{error}^{(2)}(\vect v).
    \end{alignedat}
    \end{split}
\end{equation}
We have arranged \eqref{eqn:uerr2} such that the first line of the RHS contains terms from cycle $1$, and the next two lines contains terms from cycle $2$. There are no contribution from cycle $0$.

We have thus obtained the problems defining $\vect u_0^{(2)}$, $\vect u_1^{(2)}$, $\vect u_2^{(2)}$, and $\vect u_\text{error}^{(2)}$, namely \eqref{eqn:u02}-\eqref{eqn:uerr2}. Cycle $2$ is complete.

\subsubsection{Cycle \texorpdfstring{$\geq 2$}{>= 2}}\label{sect:inductivecase_formulae}

We shall now proceed with the inductive step. Assume that \eqref{inductivestep}, \eqref{eqn:inductivestep_remainder_1}, and \eqref{eqn:inductivestep_remainder_2} holds for $k=0,\cdots,n-1$, where $n\geq 3$. Consider the expansion of $\vect u$ at cycle $n$:
\begin{align}
   \vect u = \sum_{k=0}^n (\vect u_0^{(k)} + \vect u_1^{(k)} + \vect u_2^{(k)}) + \vect u_\text{error}^{(n)}.
\end{align}
Substitute this expansion into the resolvent equation and rearrange, 
\begin{align}\label{eqn:resolvent_i}
    \begin{split}
        &\sum_{k=0}^n \sum_{j=0}^2 
        \frac{1}{|\chi|^2} \int_Y \A \simgrad \vect u_j^{(k)} : \overline{\simgrad \vect v}
        + \frac{1}{|\chi|^2}  \int_Y \A \simgrad \vect u_j^{(k)} : \overline{iX_\chi \vect v}\\
        &\qquad+ \frac{1}{|\chi|^2}  \int_Y \A iX_\chi \vect u_j^{(k)} : \overline{\simgrad \vect v}
        + \frac{1}{|\chi|^2}  \int_Y \A iX_\chi \vect u_j^{(k)} : \overline{iX_\chi \vect v}
        - z \int_Y \vect u_j^{(k)} \cdot \overline{\vect v} \\
        &\qquad+ \frac{1}{|\chi|^2} \int_Y \A (\simgrad + iX_\chi) \vect u_\text{error}^{(n)} \cdot \overline{(\simgrad + iX_\chi) \vect v}
        - z \int_Y \vect u_\text{error}^{(n)} \cdot \overline{\vect v}
        = \int_Y \vect f \cdot \overline{\vect v}.
    \end{split}
\end{align}
Again, we shall omit the quantifier ``$\text{for all } \vect v ~\text{in}~ H_{\#}^1(Y;\C^3)$" to keep the expressions compact.

By the equations for $\vect u_0^{(k)}$, $\vect u_1^{(k)}$, $\vect u_2^{(k)}$, $\vect u_\text{error}^{(k)}$, for $k=0,\cdots,n-1$, namely \eqref{inductivestep}, \eqref{eqn:inductivestep_remainder_1}, and \eqref{eqn:inductivestep_remainder_2} (the inductive hypothesis), we claim that we are left with
\begin{equation}\label{eqn:resolvent_i_v2}
    \begin{split}
    \begin{alignedat}{3}
        & &~~\frac{1}{|\chi|^2} \int_Y \A iX_\chi \vect u_2^{(n-2)} : \overline{iX_\chi \vect v}
        - z \int_Y \vect u_2^{(n-2)} \cdot \overline{\vect v} \\
        & &~+\frac{1}{|\chi|^2} \int_Y \A iX_\chi \vect u_1^{(n-1)} : \overline{iX_\chi \vect v}
        - z \int_Y \vect u_1^{(n-1)} \cdot \overline{\vect v} \\
        &+ \frac{1}{|\chi|^2} \int_Y \A \simgrad \vect u_2^{(n-1)} : \overline{iX_\chi \vect v}
        + \frac{1}{|\chi|^2} \int_Y \A iX_\chi \vect u_2^{(n-1)} : \overline{\simgrad \vect v}
        &~+ \frac{1}{|\chi|^2} \int_Y \A iX_\chi \vect u_2^{(n-1)} : \overline{iX_\chi \vect v}
        - z \int_Y \vect u_2^{(n-1)} \cdot \overline{\vect v} \\
        &+\sum_{j=0}^2 
        \frac{1}{|\chi|^2} \int_Y \A \simgrad \vect u_j^{(n)} : \overline{\simgrad \vect v} \\
        &+ \frac{1}{|\chi|^2}  \int_Y \A \simgrad \vect u_j^{(n)} : \overline{iX_\chi \vect v}
        + \frac{1}{|\chi|^2}  \int_Y \A iX_\chi \vect u_j^{(n)} : \overline{\simgrad \vect v}
        + \frac{1}{|\chi|^2}  \int_Y \A iX_\chi \vect u_j^{(n)} : \overline{iX_\chi \vect v}
        - z \int_Y \vect u_j^{(n)} \cdot \overline{\vect v} \span \\
        &+ \frac{1}{|\chi|^2} \int_Y \A (\simgrad + iX_\chi) \vect u_\text{error}^{(n)} \cdot \overline{(\simgrad + iX_\chi) \vect v}
        - z \int_Y \vect u_\text{error}^{(n)} \cdot \overline{\vect v}
        = 0 \span
    \end{alignedat}
    \end{split}
\end{equation}
To see that this is indeed what remains, recall the equation for $\vect u_\text{error}^{(n-1)}$, namely
\begin{align}
    \begin{split}
        &\frac{1}{|\chi|^2} \int_Y \A (\simgrad + iX_\chi) \vect u_\text{error}^{(n-1)} : 
        \overline{(\simgrad + iX_\chi) \vect v}
        - z \int_Y \vect u_\text{error}^{(n-1)} \cdot \overline{\vect v}
        = \frac{1}{|\chi|^2} \mathcal{R}_\text{error}^{(n-1)}(\vect v),
    \end{split}
\end{align}
which we have assumed to hold by the inductive hypothesis. More precisely, the first three lines of \eqref{eqn:resolvent_i_v2} are the leftover terms from cycles $\leq n-1$, which by definition is precisely $- \frac{1}{|\chi|^2} \mathcal{R}_\text{error}^{(n-1)}(\vect v)$.

There are no contribution from cycles $< n-2$.

\begin{remark}\label{rmk:error_i}
     The first six terms of $- \frac{1}{|\chi|^2} \mathcal{R}_\text{error}^{(n-1)}(\vect v)$ are of order $|\chi|^n$, and the remaining two terms are of order $|\chi|^{n+1}$. In particular, $\frac{1}{|\chi|^2} \mathcal{R}_\text{error}^{(n-1)}(\vect v) = O(|\chi|^n)$, and hence its terms do not play a role in the equations defining $\vect u_0^{(n)}$ and $\vect u_1^{(n)}$.
\end{remark}

We now proceed to collect $O(|\chi|^{n-2})$ terms to get an equation for $\vect u_0^{(n)}$:
\begin{align}\label{eqn:u0I}
    \frac{1}{|\chi|^2} \int_Y \A \simgrad \vect u_0^{(n)} : \overline{\simgrad \vect v} = 0.
\end{align}

Collecting the $O(|\chi|^{n-1})$ terms gives us an equation for $\vect u_1^{(n)}$:
\begin{align}\label{eqn:u1I}
    \frac{1}{|\chi|^2} \int_Y \A (\simgrad \vect u_1^{(n)} + iX_\chi \vect u_0^{(n)} ) : \overline{\simgrad \vect v} = 0.
\end{align}
(Note that the $O(|\chi|^{n-1})$ term $\int_Y \A \simgrad \vect u_0^{(n)} : \overline{iX_\chi \vect v}$ is zero, as $\vect u_0^{(n)}$ is a constant, by \eqref{eqn:u0I}.) This has a unique solution in the space $\dot{H}_{\#}(Y;\C^3)$.

We claim that by collecting the $O(|\chi|^{n})$ terms, we obtain the following equation for $\vect u_2^{(n)}$:
\begin{align}\label{eqn:u2I}
    \begin{split}
        &\frac{1}{|\chi|^2} \int_Y \A \simgrad \vect u_2^{(n)} : \overline{\simgrad \vect v} \\
        &\quad = - \frac{1}{|\chi|^2} \int_Y \A \simgrad (\vect u_1^{(n)} + \vect u_2^{(n-1)}) : \overline{iX_\chi \vect v}
        - \frac{1}{|\chi|^2} \int_Y \A iX_\chi (\vect u_1^{(n)} + \vect u_2^{(n-1)}) : \overline{\simgrad \vect v} \\
        &\qquad + \frac{1}{|\chi|^2} \int_Y \A iX_\chi (\vect u_0^{(n)} + \vect u_1^{(n-1)} + \vect u_2^{(n-2)}) : \overline{iX_\chi \vect v}
        + z \int_Y (\vect u_0^{(n)} + \vect u_1^{(n-1)} + \vect u_2^{(n-2)}) \cdot \overline{\vect v}.
    \end{split}
\end{align}
To see this, observe that the RHS of \eqref{eqn:u2I} consists of the six $O(|\chi|^n)$ terms from cycles $\leq n-1$ (in the first three lines of \eqref{eqn:resolvent_i_v2}, see also Remark \ref{rmk:error_i}), and four terms from cycle $n$ (in the fourth and fifth line of \eqref{eqn:resolvent_i_v2}).

\eqref{eqn:u2I} has a unique solution in $\dot{H}_{\#}$ if and only if its RHS vanishes for all $\vect v \in \C^3$. By the definition of $\mathcal{A}_\chi^{\hom}$, this criterion can be written as
\begin{align}\label{eqn:u0I_homogenized}
    \left\langle \left(\frac{1}{|\chi|^2} \mathcal{A}_\chi^{\hom} - zI_{\C^3} \right) \vect u_0^{(n)}, \vect v_0 \right\rangle_{\C^3} =
    -\frac{1}{|\chi|^2} \int_Y \A \left( \simgrad \vect u_2^{(n-1)} + iX_\chi (\vect u_1^{(n-2)} + \vect u_2^{(n-2)}) \right)  : \overline{iX_\chi \vect v_0}, \quad
    \forall \vect v_0 \in \C^3.
\end{align}
Since $z \in \rho(\tfrac{1}{|\chi|^2} \mathcal{A}_\chi^{\hom} )$, the constant vector $\vect u_0^{(n)}$ is uniquely chosen by \eqref{eqn:u0I_homogenized}. Consequently, $\vect u_1^{(n)}$ and $\vect u_2^{(n)}$ are uniquely determined by \eqref{eqn:u1I} and \eqref{eqn:u2I}.

Finally, we claim that collecting the leftover terms gives us the following equation for $\vect u_\text{error}^{(n)}$:
\begin{equation}\label{eqn:uerrI}
    \begin{split}
    \begin{alignedat}{3}
        &\frac{1}{|\chi|^2} \int_Y \A (\simgrad + iX_\chi) \vect u_\text{error}^{(n)} : \overline{(\simgrad + iX_\chi) \vect v }
        - z \int_Y \vect u_\text{error}^{(n)} \cdot \overline{\vect v}
        \span \\
        &= -\frac{1}{|\chi|^2} \int_Y \A iX_\chi \vect u_2^{(n-1)} : \overline{iX_\chi \vect v}
        + z \int_Y \vect u_2^{(n-1)} \cdot \overline{\vect v}
        & \\
        &- \frac{1}{|\chi|^2} \int_Y \A iX_\chi \vect u_1^{(n)} : \overline{iX_\chi \vect v}
        + z \int_Y \vect u_1^{(n)} \cdot \overline{\vect v} & \\
        &- \frac{1}{|\chi|^2} \int_Y \A iX_\chi \vect u_2^{(n)} : \overline{iX_\chi \vect v}
        + z \int_Y \vect u_2^{(n)} \cdot \overline{\vect v}
        \span~- \frac{1}{|\chi|^2} \int_Y \A \simgrad \vect u_2^{(n)} : \overline{iX_\chi \vect v}
        - \frac{1}{|\chi|^2} \int_Y \A iX_\chi \vect u_2^{(n)} : \overline{\simgrad \vect v} \\
        &=: \frac{1}{|\chi|^2} \mathcal{R}_\text{error}^{(n)}(\vect v).
    \end{alignedat}
    \end{split}
\end{equation}
To see this, observe that the RHS of \eqref{eqn:uerrI} contains the two $O(|\chi|^{n+1})$ terms from $\frac{1}{|\chi|^2} \mathcal{R}_\text{error}^{(n-1)}(\vect v)$ (second line of \eqref{eqn:uerrI}), and six terms from cycle $n$ (third and fourth line of \eqref{eqn:uerrI}). There are no contribution from cycles $\leq n-2$.

We have thus obtained the problems defining $\vect u_0^{(n)}$, $\vect u_1^{(n)}$, $\vect u_2^{(n)}$, and $\vect u_\text{error}^{(n)}$, namely \eqref{eqn:u0I}-\eqref{eqn:uerrI}. These are precisely the equations \eqref{inductivestep}, \eqref{eqn:inductivestep_remainder_1}, and \eqref{eqn:inductivestep_remainder_2}. The proof is complete.

\subsection{Estimates}\label{sect:fibrewise_estimates_working}

In the following proposition we provide the estimates in $H^1$ norm for the terms constructed using the Algorithm \ref{defn:algorithm}. For the definition of $D_\chi^{\rm hom}(z)$, $D_\chi(z)$, recall the Definition \ref{defn:dhomchi_dchi}.

\begin{proposition}\label{prop:fibrewise_terms_size}
     Fix $n \in \N_0$, $\chi \in Y'\setminus \{ 0 \}$, and $z \in  \rho(\frac{1}{|\chi|^2} \mathcal{A}_\chi^{\text{hom}} )$. There exist a constant $C>0$, independent of $n$, $\chi$, and $z$, such that the following estimates hold:
    \begin{align}
        &\lVert \vect u_0^{(n)}\rVert_{H^1(Y;\C^3)} \leq C^{n+1} \left[\sum_{k=0}^{n+1} \sum_{l=0}^n \frac{|z|^l}{\left(D_\chi^{\text{hom}}(z)\right)^{k}} \right]     |\chi|^{n}\lVert\vect f\rVert_{L^2(Y;\C^3)} \label{eqn:fibrewise_estimates1}\\
        &\lVert \vect u_1^{(n)}\rVert_{H^1(Y;\C^3)} \leq C^{n+1} \left[\sum_{k=0}^{n+1} \sum_{l=0}^n \frac{|z|^l}{\left(D_\chi^{\text{hom}}(z)\right)^{k}} \right]    |\chi|^{n+1}\lVert\vect f\rVert_{L^2(Y;\C^3)} \label{eqn:fibrewise_estimates2}\\
        &\lVert \vect u_2^{(n)}\rVert_{H^1(Y;\C^3)} \leq C^{n+1} \left[\sum_{k=0}^{n+1} \sum_{l=0}^{n+1} \frac{|z|^l}{\left(D_\chi^{\text{hom}}(z)\right)^{k}} \right]     |\chi|^{n+2}\lVert\vect f\rVert_{L^2(Y;\C^3)} \label{eqn:fibrewise_estimates3},
    \end{align}
    where $\vect u_0^{(n)}$, $\vect u_1^{(n)}$, $\vect u_2^{(n)}$ are provided in \eqref{eqn:inductivestep_remainder_1}, $\vect f \in L^2(Y;\C^3)$.
    Furthermore, the constant $C$ has the following explicit form:     
    \begin{equation}
    \begin{split}
           &C:= \max\{C_0, C_1, C_2\}, \quad  C_0 := \max\left\{ \frac{3}{\nu}, 1 \right\}, \quad  C_1 := \max\left\{\frac{3C_{\rm Korn}}{\nu^3}, \frac{C_{\rm Korn}}{\nu^2},1\right\},\\ & C_2 := \max\left\{\frac{4C_{\rm Korn}^2}{\nu^2} C_1 +  \frac{6C_{\rm Korn}^2}{\nu}\max\left\{1, \frac{1}{\nu}\right\}    C_0, 1\right\}.
    \end{split}
    \end{equation}
\end{proposition}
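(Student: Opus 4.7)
The plan is to prove all three estimates simultaneously by strong induction on $n$, exploiting the four well-posed problems \eqref{inductivestep} that define $\vect u_0^{(n)}, \vect u_1^{(n)}, \vect u_2^{(n)}$ in that order. The three analytic tools I will rely on throughout are: (i) the uniform ellipticity \eqref{assump:elliptic} of $\A$, (ii) the Korn inequality on $\dot{H}_\#^1(Y;\C^3)$ (which controls $\|\cdot\|_{H^1}$ by $\|\simgrad\cdot\|_{L^2}$ with constant $C_\text{Korn}$), and (iii) the pointwise bound $\|iX_\chi \vect u\|_{L^2(Y;\C^{3\times 3})} \leq |\chi|\|\vect u\|_{L^2(Y;\C^3)}$ from \eqref{eqn:Xchi_est}. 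Combined, (i)–(ii) give coercivity of the form $\int_Y \A \simgrad \vect u:\overline{\simgrad \vect v}$ on the mean-zero space, so that each of the equations for $\vect u_1^{(n)}$ and $\vect u_2^{(n)}$ may be inverted with a constant of order $C_\text{Korn}/\nu$.

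For the base case $n=0$, I read off the bound for $\vect u_0^{(0)}$ from the fourth line of \eqref{eqn:cycle_i0}: since $z \in \rho(\tfrac{1}{|\chi|^2}\mathcal{A}_\chi^{\hom})$ and $\mathcal{A}_\chi^{\hom}$ is Hermitian, spectral calculus gives $|\vect u_0^{(0)}| \leq \|\vect f\|/D_\chi^{\hom}(z)$, which is the claimed bound in the specific form with $k=1$, $l=0$. Testing the equations for $\vect u_1^{(0)}$ and $\vect u_2^{(0)}$ against themselves, applying coercivity on the LHS and \eqref{eqn:Xchi_est} on the RHS, yields $\|\vect u_1^{(0)}\|_{H^1} \lesssim |\chi||\vect u_0^{(0)}|$ and $\|\vect u_2^{(0)}\|_{H^1} \lesssim |\chi|^2\bigl(|\vect u_0^{(0)}| + |z||\vect u_0^{(0)}| + \|\vect f\|\bigr)$; the extra $\|\vect f\|$ in the second case is the source of the $k=0$ term in the sum for $\vect u_2^{(n)}$, and the $|z|$ factor from $z|\chi|^2 \vect u_0^{(0)}$ produces the $l=n+1=1$ slot.

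For the inductive step, suppose \eqref{eqn:fibrewise_estimates1}–\eqref{eqn:fibrewise_estimates3} hold at all indices $m\leq n-1$. I work through the four equations of \eqref{inductivestep} in order. First, the homogenized resolvent bound $1/D_\chi^{\hom}(z)$ applied to \eqref{eqn:u0I_homogenized} gives
\begin{equation*}
    |\vect u_0^{(n)}| \leq \frac{1}{D_\chi^{\hom}(z)}\cdot\frac{C|\chi|}{|\chi|^2}\Bigl(\|\vect u_2^{(n-1)}\|_{H^1} + |\chi|\|\vect u_1^{(n-1)}\|_{L^2} + |\chi|\|\vect u_2^{(n-2)}\|_{L^2}\Bigr),
\end{equation*}
and inserting the inductive hypothesis for those three quantities (each of size $\lesssim |\chi|^{n+1}\|\vect f\|$ up to the relevant double sum) delivers the estimate for $\vect u_0^{(n)}$; the new factor of $1/D_\chi^{\hom}(z)$ is exactly what promotes the $k$-sum from $\{0,\dots,n\}$ to $\{0,\dots,n+1\}$. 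Next, testing the second equation of \eqref{inductivestep} against $\vect u_1^{(n)}$ and using coercivity plus \eqref{eqn:Xchi_est} gives $\|\vect u_1^{(n)}\|_{H^1} \leq (C_\text{Korn}/\nu)|\chi||\vect u_0^{(n)}|$, which inherits the bound for $\vect u_0^{(n)}$ with one extra power of $|\chi|$. Finally, testing the third equation of \eqref{inductivestep} against $\vect u_2^{(n)}$ and treating the five RHS groups (involving $\vect u_1^{(n)}, \vect u_2^{(n-1)}, \vect u_0^{(n)}, \vect u_1^{(n-1)}, \vect u_2^{(n-2)}$ together with the two $z$-terms) via \eqref{eqn:Xchi_est} and the already-established or inductive bounds yields \eqref{eqn:fibrewise_estimates3}; the $z|\chi|^2 \int \vect u_2^{(n-2)} \cdot \overline{\vect v}$ term is the one that introduces the extra $|z|$ factor raising the $l$-sum to $\{0,\dots,n+1\}$.

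The main obstacle is purely bookkeeping: verifying that all the constants produced in the three inversions (of orders $1/D_\chi^{\hom}(z)$, $C_\text{Korn}/\nu$, and $C_\text{Korn}/\nu$ respectively), together with the $\|\A\|_{L^\infty}\leq 1/\nu$ factors, fit inside a single master constant $C=\max\{C_0,C_1,C_2\}$ as given. Concretely, the choices $C_0 \geq 3/\nu$, $C_1 \geq 3C_\text{Korn}/\nu^3$ and $C_2 \geq (4C_\text{Korn}^2/\nu^2) C_1 + (6C_\text{Korn}^2/\nu)\max\{1,1/\nu\}C_0$ are calibrated precisely so that the products $C\cdot C^n$ appearing in the three inductive conclusions are each bounded by $C^{n+1}$. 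The fact that the claimed sums $\sum_{k=0}^{n+1}\sum_{l=0}^n$ (respectively $\sum_{k=0}^{n+1}\sum_{l=0}^{n+1}$) are telescopically larger than those produced at stage $n-1$ ensures that every contribution from the RHS terms is dominated by the claimed stage-$n$ sum, closing the induction.
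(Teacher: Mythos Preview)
Your proposal is correct and follows essentially the same inductive strategy as the paper: establish the base cases directly, then for the inductive step bound $\vect u_0^{(n)}$ via the homogenized resolvent equation (gaining a factor $1/D_\chi^{\hom}(z)$), feed this into the cell problem for $\vect u_1^{(n)}$ (gaining $C_{\rm Korn}|\chi|/\nu^2$), and finally test the $\vect u_2^{(n)}$ equation against itself, with the main labour being the constant bookkeeping that you correctly identify. One small organizational point: because the equation for $\vect u_2^{(1)}$ in \eqref{eqn:cycle_i1} lacks a $\vect u_2^{(-1)}$ term, the paper treats both $n=0$ and $n=1$ as base cases (deferring to \cite{simplified_method}) and starts the generic inductive step only at $n\geq 2$; your sketch handles $n=0$ but should also separately dispose of $n=1$ before invoking the full template \eqref{inductivestep}.
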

\begin{proof}
     The proof goes by mathematical induction. First, we recall from \cite[Sect.\,5.3]{simplified_method}, that the estimates \eqref{eqn:fibrewise_estimates1}-\eqref{eqn:fibrewise_estimates3} for cycle $n = 0, 1$ have been established. We shall now show that \eqref{eqn:fibrewise_estimates1}-\eqref{eqn:fibrewise_estimates3} holds for $n\geq 2$, assuming that \eqref{eqn:fibrewise_estimates1}-\eqref{eqn:fibrewise_estimates3} is true for cycles $0,\cdots, n-1$.
     
     
     The fourth equation in \eqref{inductivestep}, together with \eqref{assump:elliptic}, \eqref{eqn:Xchi_est}, yields:


\begin{equation}
\label{intermediateestimate0}
    \begin{split}
         \lVert \vect u_0^{(n)}\rVert_{H^1} & =  \lVert \vect u_0^{(n)}\rVert_{L^2} \leq  \frac{1}{D_\chi^{\text{hom}}(z)} \frac{1}{|\chi|^2} \left\lVert (iX_\chi)^* \A \left(\simgrad \vect u_2^{(n-1)} + iX_\chi \left(\vect u_1^{(n-1)} + \vect u_2^{(n-2)}  \right) \right)\right\rVert_{L^2} \\
         & \leq  \frac{1}{D_\chi^{\text{hom}}(z)} \frac{1}{|\chi|^2} \frac{1}{\nu}\left(  |\chi| \left\lVert \simgrad \vect u_2^{(n-1)}\right\rVert_{L^2} +  |\chi|^2 \left( \left\lVert \vect u_1^{(n-1)} \right\rVert_{L^2} + \left\lVert \vect u_2^{(n-2)}\right\rVert_{L^2} \right) \right) \\
         & \leq \frac{1}{D_\chi^{\text{hom}}(z)} \frac{1}{|\chi|} \frac{1 }{\nu}\left(  \left\lVert  \vect u_2^{(n-1)}\right\rVert_{H^1} + |\chi| \left( \left\lVert \vect u_1^{(n-1)} \right\rVert_{H^1} + \left\lVert \vect u_2^{(n-2)}\right\rVert_{H^1} \right) \right).
    \end{split}
\end{equation}
Furthermore, from the second equation in \eqref{inductivestep} and \eqref{assump:elliptic}, \eqref{eqn:Xchi_est}, we have:

\begin{equation}
    \begin{split}
        \nu \lVert \simgrad \vect u_1^{(n)}\rVert_{L^2}^2 & \leq \int_Y \A \simgrad \vect u_1^{(n)} : \overline{\simgrad \vect u_1^{(n)}} \leq \left|\int_Y \A iX_\chi \vect u_0^{(n)}: \overline{\simgrad \vect u_1^{(n)}} \right| \\
        & \leq \frac{1}{\nu} |\chi| \lVert \vect u_0^{(n)} \rVert_{L^2} \lVert \simgrad \vect u_1^{(n)}\rVert_{L^2} \leq \frac{1}{\nu} |\chi| \lVert \vect u_0^{(n)} \rVert_{H^1} \lVert \simgrad \vect u_1^{(n)}\rVert_{L^2}.
    \end{split}
\end{equation}
Thus, by Korn's inequality \eqref{korninequality33}:
\begin{equation}
\label{intermediateestimate1}
    \lVert  \vect u_1^{(n)}\rVert_{H^1} \leq \frac{C_{\rm Korn} }{\nu^2}|\chi|\lVert \vect u_0^{(n)} \rVert_{H^1}
\end{equation}
Finally, from the third equation in \eqref{inductivestep} and \eqref{assump:elliptic}, \eqref{eqn:Xchi_est} it follows: 
\begin{equation}
    \begin{split}
        &   \nu \lVert \simgrad \vect u_2^{(n)}\rVert_{L^2}^2  \leq \int_Y \A \simgrad \vect u_2^{(n)} : \overline{\simgrad \vect u_2^{(n)}}  \\
         & \leq \left|\int_Y \A iX_\chi \left(\vect u_1^{(n)} + \vect u_2^{(n-1)} \right): \overline{\simgrad \vect u_2^{(n)}} \right| + \left|\int_Y \A \simgrad \left(\vect u_1^{(n)} + \vect u_2^{(n-1)} \right): \overline{iX_\chi \vect u_2^{(n)}} \right| \\
         & + \left|\int_Y \A iX_\chi \left(\vect u_0^{(n)} + \vect u_1^{(n-1)} + \vect u_2^{(n-2)} \right): \overline{i X_\chi \vect u_2^{(n)}} \right| + |z||\chi|^2\left|\int_Y  \left(\vect u_0^{(n)} + \vect u_1^{(n-1)} + \vect u_2^{(n-2)} \right)\cdot \overline{ \vect u_2^{(n)}} \right| \\
         & \leq \frac{1}{\nu} |\chi| \left(\lVert \vect u_1^{(n)}\rVert_{L^2} + \lVert \vect u_2^{(n-1)}\rVert_{L^2} \right)\lVert \simgrad \vect u_2^{(n)}\rVert_{L^2} + \frac{1}{\nu} |\chi| \left(\lVert \simgrad \vect u_1^{(n)}\rVert_{L^2} + \lVert \simgrad \vect  u_2^{(n-1)}\rVert_{L^2} \right)\lVert \vect u_2^{(n)}\rVert_{L^2} \\
         & + \frac{1}{\nu} |\chi|^2 \left(\lVert \vect u_0^{(n)}\rVert_{L^2} + \lVert \vect u_1^{(n-1)}\rVert_{L^2} + \lVert \vect u_2^{(n-2)}\rVert_{L^2} \right)\lVert \vect u_2^{(n)}\rVert_{L^2} + |z| |\chi|^2 \left(\lVert \vect u_0^{(n)}\rVert_{L^2} + \lVert \vect u_1^{(n-1)}\rVert_{L^2} + \lVert \vect u_2^{(n-2)}\rVert_{L^2}  \right)\lVert \vect u_2^{(n)}\rVert_{L^2} \\
         & \leq \left(\frac{2}{\nu}|\chi| \left(\lVert \vect u_1^{(n)}\rVert_{H^1} + \lVert \vect u_2^{(n-1)}\rVert_{H^1} \right) + \left(\frac{1}{\nu} + |z| \right)|\chi|^2 \left( \lVert \vect u_0^{(n)}\rVert_{H^1} + \lVert \vect u_1^{(n-1)}\rVert_{H^1} + \lVert \vect u_2^{(n-2)}\rVert_{H^1}\right) \right)\lVert \vect u_2^{(n)}\rVert_{H^1} \\
         & \leq \left(\frac{2}{\nu}|\chi| \left(\lVert \vect u_1^{(n)}\rVert_{H^1} + \lVert \vect u_2^{(n-1)}\rVert_{H^1} \right) + \left(\frac{1}{\nu} + |z| \right)|\chi|^2 \left( \lVert \vect u_0^{(n)}\rVert_{H^1} + \lVert \vect u_1^{(n-1)}\rVert_{H^1} + \lVert \vect u_2^{(n-2)}\rVert_{H^1}\right) \right)C_{\rm Korn}\lVert  \simgrad \vect u_2^{(n)}\rVert_{L^2}.
    \end{split}
\end{equation}
Therefore, we have: 
\begin{equation}
\label{intermediateestimate2}
    \begin{split}
        & \lVert  \vect u_2^{(n)}\rVert_{H^1} \leq C_{\rm Korn} \lVert  \simgrad \vect u_2^{(n)}\rVert_{L^2} \\
        & \leq \frac{2C_{\rm Korn}^2}{\nu^2}|\chi| \left(\lVert \vect u_1^{(n)}\rVert_{H^1} + \lVert \vect u_2^{(n-1)}\rVert_{H^1} \right) + \frac{C_{\rm Korn}^2}{\nu}\left(\frac{1}{\nu} + |z| \right)|\chi|^2 \left( \lVert \vect u_0^{(n)}\rVert_{H^1} + \lVert \vect u_1^{(n-1)}\rVert_{H^1} + \lVert \vect u_2^{(n-2)}\rVert_{H^1}\right)  \\
        & \leq \frac{2C_{\rm Korn}^2}{\nu^2}|\chi| \left(\lVert \vect u_1^{(n)}\rVert_{H^1} + \lVert \vect u_2^{(n-1)}\rVert_{H^1} \right) + \frac{C_{\rm Korn}^2}{\nu}C_\nu(1+|z|)  |\chi|^2 \left( \lVert \vect u_0^{(n)}\rVert_{H^1} + \lVert \vect u_1^{(n-1)}\rVert_{H^1} + \lVert \vect u_2^{(n-2)}\rVert_{H^1}\right), 
    \end{split}
\end{equation}
where we use notation: 
\begin{equation}
    C_\nu:= \max\left\{1, \frac{1}{\nu}\right\}
\end{equation}
Next we use the induction hypothesis: 
    \begin{equation}
        \lVert \vect u_2^{(n-1)}\rVert_{H^1} \leq C^{n} \left[\sum_{k=0}^{n} \sum_{l=0}^n \frac{|z|^l}{\left(D_\chi^{\text{hom}}(z)\right)^{k}} \right]     |\chi|^{n+1}\lVert\vect f\rVert_{L^2}
    \end{equation}
    \begin{equation}
        \lVert \vect u_1^{(n-1)}\rVert_{H^1} \leq C^{n} \left[\sum_{k=0}^{n} \sum_{l=0}^{n-1} \frac{|z|^l}{\left(D_\chi^{\text{hom}}(z)\right)^{k}} \right]    |\chi|^{n}\lVert\vect f\rVert_{L^2}
    \end{equation}
    \begin{equation}
        \lVert \vect u_2^{(n-2)}\rVert_{H^1} \leq C^{n-1} \left[\sum_{k=0}^{n-1} \sum_{l=0}^{n-1} \frac{|z|^l}{\left(D_\chi^{\text{hom}}(z)\right)^{k}} \right]     |\chi|^{n}\lVert\vect f\rVert_{L^2}
    \end{equation}
Combining this with \eqref{intermediateestimate0}, we are able to obtain the following estimate: 
\begin{equation}
    \begin{split}
         & \lVert \vect u_0^{(n)}\rVert_{H^1} \\
        &  \leq \frac{1}{D_\chi^{\text{hom}}(z)} \frac{1 }{\nu} C^{n-1} \left(    C\sum_{k=0}^{n} \sum_{l=0}^{n} \frac{|z|^l}{\left(D_\chi^{\text{hom}}(z)\right)^{k}}  +  C\sum_{k=0}^{n} \sum_{l=0}^{n-1} \frac{|z|^l}{\left(D_\chi^{\text{hom}}(z)\right)^{k}} + \sum_{k=0}^{n-1} \sum_{l=0}^{n-1} \frac{|z|^l}{\left(D_\chi^{\text{hom}}(z)\right)^{k}} \right)|\chi|^{n}\lVert\vect f\rVert_{L^2} \\
         &\leq  \frac{3 }{\nu} C^{n}  \left(   \sum_{k=0}^{n+1} \sum_{l=0}^{n} \frac{|z|^l}{\left(D_\chi^{\text{hom}}(z)\right)^{k}} \right)|\chi|^{n}\lVert\vect f\rVert_{L^2},
    \end{split}
\end{equation}
where we have used the fact that $C>1$.
We introduce the following auxiliary constant: 
\begin{equation}
    C_0 := \max\left\{\frac{3}{\nu}, 1 \right\} 
\end{equation}
Now we have: 
\begin{equation}
\label{almostestimate0}
    \lVert \vect u_0^{(n)}\rVert_{H^1} \leq C_0 C^{n}  \sum_{k=0}^{n+1} \sum_{l=0}^{n} \frac{|z|^l}{\left(D_\chi^{\text{hom}}(z)\right)^{k}}|\chi|^{n}\lVert\vect f\rVert_{L^2}
\end{equation}
Next, combining this with \eqref{intermediateestimate1} yields:
\begin{equation}
    \lVert  \vect u_1^{(n)}\rVert_{H^1} \leq \frac{C_{\rm Korn} }{\nu^2} C_0 C^{n} \sum_{k=0}^{n+1} \sum_{l=0}^{n} \frac{|z|^l}{\left(D_\chi^{\text{hom}}(z)\right)^{k}}|\chi|^{n+1}\lVert\vect f\rVert_{L^2}
\end{equation}
By introducing the auxiliary constant 
\begin{equation}
    C_1 := \max\left\{\frac{C_{\rm Korn} }{\nu^2} C_0,1\right\},
\end{equation}
we have: 
\begin{equation}
\label{almostestimate1}
    \lVert  \vect u_1^{(n)}\rVert_{H^1} \leq C_1 C^{n} \sum_{k=0}^{n+1} \sum_{l=0}^{n} \frac{|z|^l}{\left(D_\chi^{\text{hom}}(z)\right)^{k}}|\chi|^{n+1}\lVert\vect f\rVert_{L^2}
\end{equation}
Lastly, from \eqref{intermediateestimate2}, \eqref{almostestimate0}, \eqref{almostestimate1}, induction hypothesis and the fact that $C\geq 1$, we have
\begin{equation}
    \begin{split}
        & \lVert  \vect u_2^{(n)}\rVert_{H^1}  \\
        & \leq \frac{2C_{\rm Korn}^2}{\nu^2}|\chi| \left(\lVert \vect u_1^{(n)}\rVert_{H^1} + \lVert \vect u_2^{(n-1)}\rVert_{H^1} \right) + \frac{C_{\rm Korn}^2}{\nu}C_\nu(1+|z|)  |\chi|^2 \left( \lVert \vect u_0^{(n)}\rVert_{H^1} + \lVert \vect u_1^{(n-1)}\rVert_{H^1} + \lVert \vect u_2^{(n-2)}\rVert_{H^1}\right) \\
        &\leq 
        \frac{4C_{\rm Korn}^2}{\nu^2} C^{n}C_1 \sum_{k=0}^{n+1} \sum_{l=0}^{n} \frac{|z|^l}{\left(D_\chi^{\text{hom}}(z)\right)^{k}} |\chi|^{n+2}\lVert\vect f\rVert_{L^2}   + \frac{3C_{\rm Korn}^2}{\nu}C_\nu  C^{n}  C_0(1+|z|) \sum_{k=0}^{n+1} \sum_{l=0}^{n} \frac{|z|^l}{\left(D_\chi^{\text{hom}}(z)\right)^{k}}|\chi|^{n+2}\lVert\vect f\rVert_{L^2} \\
        & \leq \frac{4C_{\rm Korn}^2}{\nu^2} C^{n}C_1 \sum_{k=0}^{n+1} \sum_{l=0}^{n} \frac{|z|^l}{\left(D_\chi^{\text{hom}}(z)\right)^{k}} |\chi|^{n+2}\lVert\vect f\rVert_{L^2} + \frac{6C_{\rm Korn}^2}{\nu}C_\nu  C^{n}  C_0\sum_{k=0}^{n+1} \sum_{l=0}^{n+1} \frac{|z|^l}{\left(D_\chi^{\text{hom}}(z)\right)^{k}}|\chi|^{n+2}\lVert\vect f\rVert_{L^2}. 
    \end{split}
\end{equation}
Therefore: 
\begin{equation}\label{almostestimate2}
    \lVert  \vect u_2^{(n)}\rVert_{H^1} \leq C_2 C^n \sum_{k=0}^{n+1} \sum_{l=0}^{n+1} \frac{|z|^l}{\left(D_\chi^{\text{hom}}(z)\right)^{k}}|\chi|^{n+2}\lVert\vect f\rVert_{L^2}.
\end{equation}
where 
\begin{equation}
    C_2 := \max\left\{\frac{4C_{\rm Korn}^2}{\nu^2} C_1 +  \frac{6C_{\rm Korn}^2}{\nu}C_\nu    C_0, 1\right\}.
\end{equation}
Finally, by setting:
\begin{equation}
    C:= \max\{C_0, C_1, C_2\},
\end{equation}
we obtain: 
\begin{equation}
\label{estimate0}
    \lVert \vect u_0^{(n)}\rVert_{H^1} \leq C^{n+1}  \sum_{k=0}^{n+1} \sum_{l=0}^{n} \frac{|z|^l}{\left(D_\chi^{\text{hom}}(z)\right)^{k}}|\chi|^{n}\lVert\vect f\rVert_{L^2}
\end{equation}
\begin{equation}
\label{estimate1}
    \lVert  \vect u_1^{(n)}\rVert_{H^1} \leq C^{n+1} \sum_{k=0}^{n+1} \sum_{l=0}^{n} \frac{|z|^l}{\left(D_\chi^{\text{hom}}(z)\right)^{k}}|\chi|^{n+1}\lVert\vect f\rVert_{L^2}
\end{equation}
\begin{equation}\label{estimate2}
    \lVert  \vect u_2^{(n)}\rVert_{H^1} \leq C^{n+1} \sum_{k=0}^{n+1} \sum_{l=0}^{n+1} \frac{|z|^l}{\left(D_\chi^{\text{hom}}(z)\right)^{k}}|\chi|^{n+2}\lVert\vect f\rVert_{L^2}.
\end{equation}
\end{proof}

\begin{proposition}\label{estimateuerrorprop}
     Fix $n \in \N_0$, $\chi \in Y'\setminus \{ 0 \}$, and $z \in \rho(\frac{1}{|\chi|^2} \mathcal{A}_\chi ) \cap \rho(\frac{1}{|\chi|^2} \mathcal{A}_\chi^{\text{hom}} )$. There exist constants $C_{\rm error}>0$, $C>0$, independent of $n$, $\chi$, and $z$, such that the following estimates hold:
    \begin{equation}
        \lVert  \vect u_{\text{error}}^{(n)} \rVert_{H^1(Y;\C^3)} \leq C_{\rm error}C^{n+1} \max\left\{1,\frac{|z+1|}{D_\chi(z)} \right\} \sum_{k=0}^{n+1} \sum_{l=0}^{n+2} \frac{|z|^l}{\left(D_\chi^{\text{hom}}(z)\right)^{k}} |\chi|^{n+1}\lVert\vect f\rVert_{L^2(Y;\C^3)},
    \end{equation}
    where $\vect u_{\rm error}$ is the solution to the equation \eqref{eqn:inductivestep_remainder_1}, $\vect f \in L^2(Y;\C^3)$.
    Furthermore, the constants have the following explicit form: 
    %
    %
    %
    \begin{equation}
    \begin{split}
           &C:= \max\{C_0, C_1, C_2\}, \quad  C_0 := \max\left\{ \frac{3}{\nu}, 1 \right\}, \quad  C_1 := \max\left\{\frac{3C_{\rm Korn}}{\nu^3}, \frac{C_{\rm Korn}}{\nu^2},1\right\},\\ & C_2 := \max\left\{\frac{4C_{\rm Korn}^2}{\nu^2} C_1 +  \frac{6C_{\rm Korn}^2}{\nu}\max\left\{1, \frac{1}{\nu}\right\}    C_0, 1\right\},\quad  C_{\rm error}:= 4\max\left\{6 + \frac{2}{\nu}, \frac{8}{\nu}, 1\right\}.
    \end{split}
    \end{equation}
\end{proposition}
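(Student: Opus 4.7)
The plan is to extend the cycle-$0$ and cycle-$1$ estimates of \cite{simplified_method} to arbitrary $n \geq 2$ using a Lax--Milgram plus duality strategy applied to the variational equation \eqref{eqn:inductivestep_remainder_1}--\eqref{eqn:inductivestep_remainder_2} satisfied by $\vect u_\text{error}^{(n)}$. Setting $F_n(\vect v) := |\chi|^{-2}\mathcal{R}_\text{error}^{(n)}(\vect v)$, I will reformulate the equation in the coercive form
\[
\frac{1}{|\chi|^2}a_\chi(\vect u_\text{error}^{(n)}, \vect v) + (\vect u_\text{error}^{(n)}, \vect v)_{L^2} = F_n(\vect v) + (1+z)(\vect u_\text{error}^{(n)}, \vect v)_{L^2}, \qquad \vect v \in H_{\#}^1(Y;\C^3).
\]
Combining the ellipticity estimate $a_\chi(\vect u, \vect u) \geq \tfrac{\nu}{2}\|\simgrad\vect u\|_{L^2}^2 - \nu|\chi|^2\|\vect u\|_{L^2}^2$ (derived from $\|\simgrad\vect u\|^2 \leq 2\|(\simgrad + iX_\chi)\vect u\|^2 + 2|\chi|^2\|\vect u\|^2$) with Proposition \ref{prop:Rayleighestim} and Korn's inequality, I will deduce a $|\chi|$-independent coercivity $\tfrac{1}{|\chi|^2}a_\chi(\vect u, \vect u) + \|\vect u\|_{L^2}^2 \geq c_0 \|\vect u\|_{H^1}^2$ on $H_{\#}^1$, for some $c_0 > 0$ depending only on $\nu, c_\text{rayleigh}, C_\text{Korn}$, and $\mu$.

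Next, I will bound $\|F_n\|_{(H_{\#}^1)^\ast}$ by estimating each of the nine terms in \eqref{eqn:inductivestep_remainder_2} using $\|\A\|_\infty \leq \nu^{-1}$, the bound \eqref{eqn:Xchi_est}, and Proposition \ref{prop:fibrewise_terms_size} applied to $\vect u_2^{(n-1)}$, $\vect u_1^{(n)}$, $\vect u_2^{(n)}$. The three $z|\chi|^2(\vect u_j^{(k)}, \vect v)_{L^2}$ summands each contribute an additional factor of $|z|$, which is precisely what shifts the $l$-range in the summation below up to $n+2$:
\[
\|F_n\|_{(H_{\#}^1)^\ast} \leq K\,C^{n+1}\,\Sigma_n\,|\chi|^{n+1}\,\|\vect f\|_{L^2}, \qquad \Sigma_n := \sum_{k=0}^{n+1}\sum_{l=0}^{n+2}\frac{|z|^l}{D_\chi^{\text{hom}}(z)^k},
\]
for a universal constant $K$ depending only on $\nu$.

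The key remaining ingredient is an $L^2$ estimate on $\vect u_\text{error}^{(n)}$ obtained via duality. For $\vect g \in L^2(Y;\C^3)$ I will solve the adjoint problem $\tfrac{1}{|\chi|^2}a_\chi(\vect w_g, \vect v) - \overline{z}(\vect w_g, \vect v) = (\vect g, \vect v)$ for all $\vect v \in H_{\#}^1$, which is well-posed with $\|\vect w_g\|_{L^2} \leq \|\vect g\|_{L^2}/D_\chi(z)$ by the self-adjointness of $\mathcal{A}_\chi$. Testing the adjoint with $\vect v = \vect w_g$ and taking real parts produces the factor $|1+z|\|\vect w_g\|_{L^2}^2$ rather than $(1+|z|)\|\vect w_g\|_{L^2}^2$; invoking the coercivity then bounds $\|\vect w_g\|_{H^1}$. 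The Hermiticity identity $(\vect u_\text{error}^{(n)}, \vect g)_{L^2} = F_n(\vect w_g)$ delivers, by duality over $\vect g$,
\[
\|\vect u_\text{error}^{(n)}\|_{L^2}^2 \leq C\,\|F_n\|_{(H_{\#}^1)^\ast}^2 \Big(\tfrac{1}{D_\chi(z)} + \tfrac{|1+z|}{D_\chi(z)^2}\Big).
\]

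To close, I will test the coercive reformulation with $\vect v = \vect u_\text{error}^{(n)}$, apply Young's inequality, and substitute the $L^2$ bound to obtain
\[
\tfrac{c_0}{2}\|\vect u_\text{error}^{(n)}\|_{H^1}^2 \leq \tfrac{1}{2c_0}\|F_n\|_{(H_{\#}^1)^\ast}^2 + |1+z|\,\|\vect u_\text{error}^{(n)}\|_{L^2}^2 \leq C\,\|F_n\|_{(H_{\#}^1)^\ast}^2\Big(1 + \tfrac{|1+z|}{D_\chi(z)} + \tfrac{|1+z|^2}{D_\chi(z)^2}\Big).
\]
A short case analysis on whether $\beta := |1+z|/D_\chi(z) \leq 1$ or $\beta > 1$ collapses the bracket to $C\max\{1, \beta\}^2$, which yields the claimed $\max\{1, |1+z|/D_\chi(z)\}$ factor after taking square roots. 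The main obstacle I anticipate is the bookkeeping: threading the constants $C_0, C_1, C_2$ from Proposition \ref{prop:fibrewise_terms_size} through the nine-term expansion of $\mathcal{R}_\text{error}^{(n)}$ to produce exactly $C_\text{error} = 4\max\{6 + 2/\nu, 8/\nu, 1\}$, and making sure the shift in the $l$-range from $n+1$ to $n+2$ is handled cleanly under the extra $|z|$-factor from the $z|\chi|^2$ summands.
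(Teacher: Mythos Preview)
Your proposal is correct and follows the same two-step strategy as the paper: (i) bound the dual norm of the right-hand side functional $|\chi|^{-2}\mathcal{R}_\text{error}^{(n)}$ term by term using Proposition \ref{prop:fibrewise_terms_size}, and (ii) convert this into an $H^1$ bound on $\vect u_\text{error}^{(n)}$ via the resolvent equation. The difference lies entirely in step (ii): the paper simply invokes the abstract inequality \eqref{eqn:absract_ineq} of Proposition \ref{prop:abstract_ineq}, which for a self-adjoint form $a$ on $X$ gives directly
\[
\|\vect u\|_X \leq 4\max\Bigl\{1,\ \tfrac{|z+1|}{\text{dist}(z,\sigma(\mathcal{A}))}\Bigr\}\,\|\mathcal{R}\|_{X^*},
\]
so that the constant $C_\text{error} = 4C_3$ with $C_3 = \max\{6C_\nu + 2/\nu,1\}$ falls out immediately. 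Your coercivity-plus-duality argument (solve the adjoint for $\vect w_g$, bound $\|\vect w_g\|_{H^1}$, then test with $\vect v = \vect u_\text{error}^{(n)}$ and collapse $1+\beta+\beta^2$) is essentially a hands-on re-derivation of that abstract lemma; it is valid but longer, and makes the exact numerical constant harder to match. Two small corrections: $\mathcal{R}_\text{error}^{(n)}$ has eight terms in \eqref{eqn:inductivestep_remainder_2}, not nine; and your coercivity constant $c_0$ need not depend on $\mu$, since the statement is for all $\chi \in Y'\setminus\{0\}$ and $Y'$ is bounded---Proposition \ref{prop:coercive_est} gives the uniform bound directly.
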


\begin{proof}
In order to estimate the residual $\mathcal{R}_{\rm error}^{(n)}$, we recall that for $\vect v \in H^1(Y;\C^3)$:
\begin{equation}
    \begin{split}
       & \frac{1}{|\chi|^2} \mathcal{R}_\text{error}^{(n)}(\vect v) =  -\frac{1}{|\chi|^2} \int_Y \A iX_\chi \vect u_2^{(n-1)} : \overline{iX_\chi \vect v} - \frac{1}{|\chi|^2} \int_Y \A iX_\chi \vect u_1^{(n)} : \overline{iX_\chi \vect v} - \frac{1}{|\chi|^2} \int_Y \A iX_\chi \vect u_2^{(n)} : \overline{iX_\chi \vect v}
        \\
        &- \frac{1}{|\chi|^2} \int_Y \A \simgrad \vect u_2^{(n)} : \overline{iX_\chi \vect v} - \frac{1}{|\chi|^2} \int_Y \A iX_\chi \vect u_2^{(n)} : \overline{ \simgrad\vect v}
        + z \int_Y \vect u_2^{(n-1)} \cdot \overline{\vect v}
        + z \int_Y \vect u_1^{(n)} \cdot \overline{\vect v} + z \int_Y \vect u_2^{(n)} \cdot \overline{\vect v}.
    \end{split}
\end{equation}
Now: 
\begin{equation}
    \begin{split}
        \left|\mathcal{R}_{\rm error}^{(n)}(\vect v) \right| &\leq \left(\left(\frac{1}{\nu} + |z|\right)|\chi|^2 \left(\left\lVert \vect u_2^{(n-1)}\right\rVert_{H^1} + \left\lVert \vect u_1^{(n)}\right\rVert_{H^1} + \left\lVert \vect u_2^{(n)}\right\rVert_{H^1} \right) + \frac{2}{\nu}|\chi|\left\lVert \vect u_2^{(n)}\right\rVert_{H^1} \right) \lVert \vect v \rVert_{H^1}\\
        & \leq \left( C_\nu (1 + |z|)|\chi|^2\left(\left\lVert \vect u_2^{(n-1)}\right\rVert_{H^1} + \left\lVert \vect u_1^{(n)}\right\rVert_{H^1} + \left\lVert \vect u_2^{(n)}\right\rVert_{H^1} \right) + \frac{2}{\nu}|\chi|\left\lVert \vect u_2^{(n)}\right\rVert_{H^1} \right) \lVert \vect v \rVert_{H^1}.
    \end{split}
\end{equation}
Therefore 
\begin{equation}
    \lVert \mathcal{R}_{\rm error}^{(n)} \rVert_{(H^1)^*} \leq C_\nu (1 + |z|)|\chi|^2\left(\left\lVert \vect u_2^{(n-1)}\right\rVert_{H^1} + \left\lVert \vect u_1^{(n)}\right\rVert_{H^1} + \left\lVert \vect u_2^{(n)}\right\rVert_{H^1} \right) + \frac{2}{\nu}|\chi|\left\lVert \vect u_2^{(n)}\right\rVert_{H^1}.
\end{equation}
Now we use the estimates \eqref{eqn:fibrewise_estimates1}, \eqref{eqn:fibrewise_estimates2}, \eqref{eqn:fibrewise_estimates3}, and the fact that $|\chi| < 1$ to see that: 
\begin{equation}
    \begin{split}
        \lVert \mathcal{R}_{\rm error}^{(n)} \rVert_{(H^1)^*} & \leq C^{n+1}\left( 3 C_\nu  (1 + |z|)\sum_{k=0}^{n+1} \sum_{l=0}^{n+1} \frac{|z|^l}{\left(D_\chi^{\text{hom}}(z)\right)^{k}} + \frac{2}{\nu}\sum_{k=0}^{n+1} \sum_{l=0}^{n+1} \frac{|z|^l}{\left(D_\chi^{\text{hom}}(z)\right)^{k}} \right)|\chi|^{n+3}\lVert\vect f\rVert_{L^2} \\
        &\leq C_3 C^{n+1} \sum_{k=0}^{n+1} \sum_{l=0}^{n+2} \frac{|z|^l}{\left(D_\chi^{\text{hom}}(z)\right)^{k}} |\chi|^{n+3}\lVert\vect f\rVert_{L^2},
    \end{split}
\end{equation}
where 
\begin{equation}
    C_3 := \max\left\{6C_\nu + \frac{2}{\nu} , 1\right\}
 \end{equation}
 Finally, by applying \eqref{eqn:absract_ineq} to the solution $u_\text{error}^{(n)}$ to the problem
 \begin{align}
    \begin{split}
        &\frac{1}{|\chi|^2} \int_Y \A (\simgrad + iX_\chi) \vect u_\text{error}^{(n)} : 
        \overline{(\simgrad + iX_\chi) \vect v}
        - z \int_Y \vect u_\text{error}^{(n)} \cdot \overline{\vect v}
        = \frac{1}{|\chi|^2} \mathcal{R}_\text{error}^{(n)}(\vect v),
    \end{split}
\end{align}
 we obtain: 
\begin{equation}
        \lVert  \vect u_{\text{error}}^{(n)} \rVert_{H^1(Y)} \leq C_{\rm error}C^{n+1} \max\left\{1,\frac{|z+1|}{D_\chi(z)} \right\} \sum_{k=0}^{n+1} \sum_{l=0}^{n+2} \frac{|z|^l}{\left(D_\chi^{\text{hom}}(z)\right)^{k}} |\chi|^{n+1}\lVert\vect f\rVert_{L^2},
    \end{equation}
where we set
\begin{equation*}
    C_{\rm error} := 4C_3 . \qedhere
\end{equation*}
\end{proof}

\subsection{Fiberwise results}\label{sect:fiberwise_results}

Rephrasing Proposition \ref{estimateuerrorprop} using \eqref{eqn:errortermdefinition}, we have the following:

\begin{theorem}\label{thm:fibrewise_estimates_functionform}
    Fix $n \in \N_0$, $\chi \in Y'\setminus \{ 0 \}$, and $z \in \rho(\frac{1}{|\chi|^2} \mathcal{A}_\chi ) \cap \rho(\frac{1}{|\chi|^2} \mathcal{A}_\chi^{\text{hom}} )$. There exist constants $C_{\rm error}>0$, $C>0$, independent of $n$, $\chi$, and $z$, such that the following estimates hold:
    \begin{equation}
    \begin{split}
         &\left\lVert  \vect u - \sum_{k = 0}^n\left(\vect u_0^{(k)} + \vect u_1^{(k)}+ \vect u_2^{(k)} \right) \right\rVert_{H^1}
         \leq C_{\rm error}C^{n+1} \max\left\{1,\frac{|z+1|}{D_\chi(z)} \right\} \sum_{k=0}^{n+1} \sum_{l=0}^{n+2} \frac{|z|^l}{\left(D_\chi^{\text{hom}}(z)\right)^{k}} |\chi|^{n+1}\lVert\vect f\rVert_{L^2}.
    \end{split}
    \end{equation}
    Here $\vect u \in H^1(Y;\C^3)$ is the solution to the resolvent problem \eqref{eqn:fiberwiseresolventproblem} for the operator $\mathcal{A}_\chi$, $\vect u_j^{(k)} \in H^1(Y;\C^3)$ are given by the inductive procedure \eqref{eqn:cycle_i0}, \eqref{eqn:cycle_i1}, \eqref{inductivestep}, and $\vect f \in L^2(Y;\C^3)$. The constants have the explicit form    
    \begin{equation}\label{eqn:constants_explicit}
    \begin{split}
           &C:= \max\{C_0, C_1, C_2\}, \quad  C_0 := \max\left\{ \frac{3}{\nu}, 1 \right\}, \quad  C_1 := \max\left\{\frac{3C_{\rm Korn}}{\nu^3}, \frac{C_{\rm Korn}}{\nu^2},1\right\},\\ & C_2 := \max\left\{\frac{4C_{\rm Korn}^2}{\nu^2} C_1 +  \frac{6C_{\rm Korn}^2}{\nu}\max\left\{1, \frac{1}{\nu}\right\}    C_0, 1\right\},\quad   C_{\rm error}:= 4\max\left\{6 + \frac{2}{\nu}, \frac{8}{\nu}, 1\right\}.
    \end{split}
    \end{equation}
\end{theorem}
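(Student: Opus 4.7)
My proof plan is essentially a one-step reduction: Theorem \ref{thm:fibrewise_estimates_functionform} is Proposition \ref{estimateuerrorprop} recast in functional form via the identity \eqref{eqn:errortermdefinition}. Rearranging that definition gives
\[
\vect u - \sum_{k=0}^n \left( \vect u_0^{(k)} + \vect u_1^{(k)} + \vect u_2^{(k)} \right) = \vect u_{\text{error}}^{(n)}
\]
as an equality in $H^1_\#(Y;\C^3)$, and so the $H^1$-norm appearing on the left-hand side of the theorem is identically $\|\vect u_{\text{error}}^{(n)}\|_{H^1}$. The desired bound is then exactly the estimate provided by Proposition \ref{estimateuerrorprop}, and the constants $C$, $C_{\text{error}}$ in \eqref{eqn:constants_explicit} are inherited verbatim.

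To make this identification rigorous, I would verify that the hypotheses and the objects involved coincide. The spectral assumption $z \in \rho(\tfrac{1}{|\chi|^2}\mathcal{A}_\chi) \cap \rho(\tfrac{1}{|\chi|^2}\mathcal{A}_\chi^{\text{hom}})$ is the same one required in Proposition \ref{estimateuerrorprop}: it simultaneously ensures the existence and uniqueness of the solution $\vect u$ to \eqref{eqn:fiberwiseresolventproblem} and the well-posedness of the approximation terms $\vect u_j^{(k)}$ via the cell problems \eqref{eqn:cycle_i0}-\eqref{inductivestep} (the latter requires $z \in \rho(\tfrac{1}{|\chi|^2}\mathcal{A}_\chi^{\text{hom}})$ to uniquely pin down $\vect u_0^{(k)}$ through the effective resolvent identity). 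Theorem \ref{thm:ncycle_formulae} then asserts that $\vect u_{\text{error}}^{(n)}$ satisfies the variational problem \eqref{eqn:inductivestep_remainder_1}-\eqref{eqn:inductivestep_remainder_2}, which is precisely the setting in which Proposition \ref{estimateuerrorprop} produces its bound.

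There is no genuine obstacle to overcome; the substantive work is upstream. The algebraic step, establishing inductively that $\vect u_{\text{error}}^{(n)}$ solves \eqref{eqn:inductivestep_remainder_1} with residual $\mathcal{R}_{\text{error}}^{(n)}$ given by \eqref{eqn:inductivestep_remainder_2}, was carried out in Section \ref{sect:algebraic_step}. The analytic step, first controlling $\|\vect u_j^{(k)}\|_{H^1}$ inductively (Proposition \ref{prop:fibrewise_terms_size}) and then passing from a bound on $\|\mathcal{R}_{\text{error}}^{(n)}\|_{(H^1)^\ast}$ to an $H^1$-bound on $\vect u_{\text{error}}^{(n)}$ via the abstract coercivity inequality, was carried out inside the proof of Proposition \ref{estimateuerrorprop}. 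Theorem \ref{thm:fibrewise_estimates_functionform} merely repackages the result in the form that Section \ref{sect:fullspace_estimates} will need for integration in $\chi$ against $\mathcal{G}_\eps \vect f$.
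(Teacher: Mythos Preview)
Your proposal is correct and matches the paper's own treatment: the paper introduces Theorem \ref{thm:fibrewise_estimates_functionform} with the line ``Rephrasing Proposition \ref{estimateuerrorprop} using \eqref{eqn:errortermdefinition}, we have the following,'' which is exactly the one-step reduction you describe. There is nothing to add.
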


\noindent
\renewcommand{\arraystretch}{1.3}
\begin{table}[t]
    \centering
    \begin{tabular}{|l||*{10}{c|}}\hline
        \backslashbox{\textbf{Cycle}\kern-4em}{\textbf{Norm}\kern-0.3em}
        &\makebox[1.3em]{$1$}
        &\makebox[1.3em]{$|\chi|$}
        &\makebox[1.3em]{$|\chi|^2$}
        &\makebox[1.3em]{$|\chi|^3$}
        &\makebox[1.3em]{$|\chi|^4$}
        &$\dots$
        &\makebox[1.3em]{$|\chi|^{n-1}$}
        &\makebox[1.3em]{$|\chi|^{n}$}
        &\makebox[1.3em]{$|\chi|^{n+1}$}
        &\makebox[1.3em]{$|\chi|^{n+2}$} \\ \hline\hline
        $0$ & $\vect u_0^{(0)}$ &$\vect u_1^{(0)}$&$\vect u_2^{(0)}$&&&&&&&\\\hline
        $1$ &&$\vect u_0^{(1)}$&$\vect u_1^{(1)}$&$\vect u_2^{(1)}$&&&&&&\\\hline
        $2$ &&&$\vect u_0^{(2)}$&$\vect u_1^{(2)}$&$\vect u_2^{(2)}$&&&&&\\\hline
        $3$ &&&&$\ddots$&$\ddots$&$\ddots$&&&&\\\hline
        $\vdots$ &&&&&$\ddots$&$\ddots$&$\ddots$&&&\\\hline
        $n-2$ &&&&&&$\vect u_0^{(n-2)}$&$\vect u_1^{(n-2)}$&$\vect u_2^{(n-2)}$&&\\\hline
        $n-1$ &&&&&&&$\vect u_0^{(n-1)}$&$\vect u_1^{(n-1)}$&$\vect u_2^{(n-1)}$&\\\hline
        $n$ &&&&&&&&$\vect u_0^{(n)}$&$\vect u_1^{(n)}$ &$\vect u_2^{(n)}$\\\hline
        \hline
        \textbf{Error} &$|\chi|$&$|\chi|^2$&$|\chi|^3$&$|\chi|^4$&$|\chi|^5$&$\dots$&$|\chi|^n$&$|\chi|^{n+1}$&$|\chi|^{n+2}$&$|\chi|^{n+3}$ \\\hline
    \end{tabular}
    \caption{The terms $\vect u_j^{(i)}$ arranged according to cycle and size of its $H^1$ norm. The final row denotes the size of the error (in $H^1$) after taking all $\vect u_j^{(i)}$'s up to that column. For example, $|\chi|^2$ refers to the size of $\| \vect u - (\vect u_0^{(0)} + \vect u_1^{(0)} + \vect u_0^{(1)}) \|_{H^1}$.}
    \label{tab:fiberwise_summary}
\end{table}

We refer the reader to Table \ref{tab:fiberwise_summary} for a summary of Section \ref{sect:fibrewise_estimates_working}, where terms $\vect u_j^{(k)}$ are arranged according to cycle and the size of its $H^1$ norm (Proposition \ref{prop:fibrewise_terms_size}). The ``Error" row refers to the $H^1$ norm of the difference between $\vect u$ and the sum of $\vect u_j^{(k)}$'s collected column-wise, e.g. $\| \vect u - (\vect u_0^{(0)} + \vect u_1^{(0)} + \vect u_0^{(1)}) \|_{H^1} = \mathcal{O}(|\chi|^2)$, as $|\chi| \downarrow 0$. This is a consequence of $\| \vect u - (\vect u_0^{(0)} + \vect u_1^{(0)} + \vect u_2^{(0)}) - (\vect u_0^{(1)} + \vect u_1^{(1)} + \vect u_2^{(1)}) \|_{H^1} = \| \vect u_\text{error}^{(1)} \|_{H^1} = \mathcal{O}(|\chi|^{2})$ (Theorem \ref{thm:fibrewise_estimates_functionform}) and $\vect u_2^{(0)}, \vect u_1^{(1)}, \vect u_2^{(1)} = \mathcal{O}(|\chi|^2)$ (Proposition \ref{prop:fibrewise_terms_size}).

Since the functions $\vect u_j^{(k)}$ depend linearly on $\vect f \in L^2(Y;\C^3)$, it is natural to state the results of the asymptotic procedure in terms of (linear) operators. To this end, we define

\begin{definition}[Corrector operators]\label{defn:corrector_operators_chi}
    For each $\chi \in Y'\setminus \{ 0 \}$, $z \in \rho (\frac{1}{|\chi|^2}\mathcal{A}_\chi ) \cap \rho (\frac{1}{|\chi|^2}\mathcal{A}_\chi^{\rm hom} )$, $k\in \N_0$, and $j\in\{0,1,2\}$, define the mapping
    \begin{equation}\label{eqn:corrector_operators}
        \mathcal{R}_{j,\chi}^{(k)} (z) : L^2(Y;\C^3) \to L^2(Y;\C^3), \quad 
        \mathcal{R}_{j,\chi}^{(k)} (z) \vect f := \vect u_j^{(k)}, 
    \end{equation}
    where $\vect u_j^{(k)}$ are given by \eqref{eqn:cycle_i0}, \eqref{eqn:cycle_i1}, and \eqref{inductivestep}.
\end{definition}

\begin{remark}
    $\mathcal{R}_{j,\chi}^{(k)} (z)$ are bounded linear maps, with operator norm of order $\mathcal{O}(|\chi|^{j+k})$.
\end{remark}

\begin{remark}\label{rmk:corrector_ops_vs_classical_formula_fibrespace}
    The first two terms in the zeroth cycle $\vect u_0^{(0)}$ and $\vect u_1^{(0)}$ can be expressed in terms of $\mathcal{A}_\chi^{\hom}$ and the classical first-order corrector $\mathbb{N}(y)$. More precisely, we have 
    \begin{align}
        &\mathcal{R}_{0,\chi}^{(0)} (z) = \left(\frac{1}{|\chi|^2} \mathcal{A}_\chi^{\rm hom} -z I_{\C^3} \right)^{-1} P_0, \\
        &\mathcal{R}_{1,\chi}^{(0)} (z) = \N(y) : (iX_\chi)\left(\frac{1}{|\chi|^2} \mathcal{A}_\chi^{\rm hom} -z I_{\C^3} \right)^{-1} P_0,
    \end{align}
    See \cite[(6.82)]{simplified_method} for a proof of this fact.
\end{remark}

Rephrasing our result in terms of the corrector operators, we have:
\begin{theorem}\label{thm:fibrewise_estimates}
    Fix $n \in \N_0$, $\chi \in Y'\setminus \{ 0 \}$, $z \in \rho(\frac{1}{|\chi|^2} \mathcal{A}_\chi ) \cap \rho(\frac{1}{|\chi|^2} \mathcal{A}_\chi^{\text{hom}} )$, and $\vect f \in L^2(Y;\C^3)$. There exist constants $C_{\rm error}>0$, $C>0$, independent of $n$, $\chi$, and $z$, such that the following estimates hold:
    \begin{align}\label{eqn:fibrewise_estimates_josip}
    \begin{split}
        &\left\| \left( \frac{1}{|
        \chi|^2} \mathcal{A}_\chi - zI \right)^{-1} \vect f - \sum_{k=0}^n \left( \mathcal{R}_{0,\chi}^{(k)}(z) + \mathcal{R}_{1,\chi}^{(k)}(z) + \mathcal{R}_{2,\chi}^{(k)}(z) \right) \vect f \right\|_{H^1(Y;\C^3)} \\
        &\qquad\qquad\qquad\qquad \leq C_{\rm error}C^{n+1} \max\left\{1,\frac{|z+1|}{D_\chi(z)} \right\} \sum_{k=0}^{n+1} \sum_{l=0}^{n+2} \frac{|z|^l}{\left(D_\chi^{\text{hom}}(z)\right)^{k}} |\chi|^{n+1}\lVert\vect f\rVert_{L^2(Y;\C^3)}.
    \end{split}
    \end{align}
    The constants have the explicit form \eqref{eqn:constants_explicit}.
\end{theorem}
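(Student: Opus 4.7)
The statement of Theorem \ref{thm:fibrewise_estimates} is essentially a reformulation of Theorem \ref{thm:fibrewise_estimates_functionform} in operator-theoretic language, so the proof should be very short and require no new ideas: one only needs to translate between functions and operators. The plan is to exploit the linearity of the procedure in $\vect f$ together with Definition \ref{defn:corrector_operators_chi} directly.

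Concretely, my approach would be as follows. First, I would fix $\vect f \in L^2(Y;\C^3)$ and let $\vect u \in H^1_{\#}(Y;\C^3)$ denote the unique solution to the resolvent problem \eqref{eqn:fiberwiseresolventproblem}; by the definition of the resolvent, this is precisely $\vect u = \left( \tfrac{1}{|\chi|^2} \mathcal{A}_\chi - zI \right)^{-1} \vect f$, which is well-defined since $z \in \rho\left(\tfrac{1}{|\chi|^2} \mathcal{A}_\chi\right)$. Next, I would invoke Definition \ref{defn:corrector_operators_chi}, which asserts that for each $k \in \N_0$ and $j \in \{0,1,2\}$ the operator $\mathcal{R}_{j,\chi}^{(k)}(z)$ sends $\vect f$ to the term $\vect u_j^{(k)}$ produced by Algorithm \ref{defn:algorithm}. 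Well-posedness of the cell problems \eqref{eqn:cycle_i0}, \eqref{eqn:cycle_i1}, \eqref{inductivestep} (Theorem \ref{thm:ncycle_formulae}) ensures that this assignment is well-defined.

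With these identifications in hand, the difference inside the $H^1$ norm on the left-hand side of \eqref{eqn:fibrewise_estimates_josip} is exactly
\[
    \vect u - \sum_{k=0}^n \left( \vect u_0^{(k)} + \vect u_1^{(k)} + \vect u_2^{(k)} \right),
\]
which is nothing other than the quantity whose $H^1$-norm is controlled by Theorem \ref{thm:fibrewise_estimates_functionform}. Applying that theorem directly yields the claimed bound, with the same explicit constants $C$ and $C_{\rm error}$ given by \eqref{eqn:constants_explicit}. Since the argument is a direct rephrasing, I do not anticipate any genuine obstacle; the only thing to verify carefully is that Definition \ref{defn:corrector_operators_chi} indeed produces bounded linear operators on $L^2(Y;\C^3)$ (so the expression $\sum_k (\mathcal{R}_{0,\chi}^{(k)} + \mathcal{R}_{1,\chi}^{(k)} + \mathcal{R}_{2,\chi}^{(k)})\vect f$ makes sense), but this is immediate from the well-posedness of the cell problems together with the estimates of Proposition \ref{prop:fibrewise_terms_size}, which give explicit operator bounds of order $|\chi|^{j+k}$.
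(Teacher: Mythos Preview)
Your proposal is correct and matches the paper's approach exactly: the paper introduces the corrector operators in Definition \ref{defn:corrector_operators_chi} and then presents Theorem \ref{thm:fibrewise_estimates} with the phrase ``Rephrasing our result in terms of the corrector operators, we have,'' giving no separate proof. Your identification of $\vect u = (\tfrac{1}{|\chi|^2}\mathcal{A}_\chi - zI)^{-1}\vect f$ and $\mathcal{R}_{j,\chi}^{(k)}(z)\vect f = \vect u_j^{(k)}$ followed by a direct appeal to Theorem \ref{thm:fibrewise_estimates_functionform} is precisely the intended argument.
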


\begin{remark}\label{rmk:z_on_contour_nice_part2}
    Continuing from Remark \ref{rmk:z_on_contour_nice}, if $z \in \Gamma$, then we may further bound
    \begin{align}
        \max\left\{1,\frac{|z+1|}{D_\chi(z)} \right\} \sum_{k=0}^{n+1} \sum_{l=0}^{n+2} \frac{|z|^l}{\left(D_\chi^{\text{hom}}(z)\right)^{k}}
        \leq \widetilde{C}^{n+3},
    \end{align}
    where the constant $\widetilde{{C}}>0$ depends only on $\nu$ and $C_\text{Fourier}$. In the proofs of the main theorems \ref{thm:l2tol2} and \ref{thm:l2toh1} later, it will be important to note that the overall constant in \eqref{eqn:fibrewise_estimates_josip} is of the form $C^n$.
\end{remark}

\section{Full space estimate}\label{sect:fullspace_estimates}

We shall now show how the fibre-wise results of Section \ref{sect:fibrewise_asmptotics} implies our main result (Theorem \ref{thm:main_result_intro}). We shall first introduce various auxiliary objects that will help us to bring the discussion back to the full space $L^2(\R^3;\C^3)$ (Section \ref{sect:rescaled_and_fullspace_ops}). Then, we prove the error estimates in the $L^2$ sense (Section \ref{sect:proof_l2tol2}), and in the $H^1$ sense (Section \ref{sect:proof_l2toh1}), thus concluding the paper.

\subsection{Rescaled and full-space correctors}\label{sect:rescaled_and_fullspace_ops}

For this section, assume that $z \in \rho( \tfrac{1}{|\chi|^2} \mathcal{A}_\chi^{\hom} ) \cap \rho( \tfrac{1}{|\chi|^2} \mathcal{A}_\chi )$ and $\chi \in Y' \setminus \{ 0 \}$.

\paragraph*{The rescaling function.}

First, let us recall from \cite[Sect. 6]{simplified_method}, the auxiliary function $g_{\eps,\chi}$.

\begin{definition}\label{defn:function_g}
    For each $\eps>0$, $\chi \neq 0$, define the function $g_{\eps,\chi}: \{z \in \C : \Re(z) > 0 \} \rightarrow \C$ 
    \begin{equation}\label{eqn:function_g}
       g_{\eps,\chi}(z) := \left(\frac{|\chi|^2}{\eps^{ 2}}z + 1 \right)^{-1}. 
    \end{equation} 
\end{definition}

The key property of $g_{\eps,\chi}$ that we need in the sequel is the following estimate:

\begin{lemma}\label{lem:function_g_bound}
    For every fixed $\eta > 0$, function  $g_{\varepsilon,\chi}$ is bounded on the half plane $\{z \in \C, \Re(z) \geq \eta\}$, with
    \begin{equation}
        |g_{\varepsilon,\chi}(z)| \leq C_\eta \left(\max\left\{\frac{|\chi|^2}{\varepsilon^{ 2}}, 1\right\}\right)^{-1}.
    \end{equation}
\end{lemma}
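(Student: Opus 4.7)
The plan is to reduce the bound to an elementary estimate on the complex-valued function $z \mapsto (tz+1)^{-1}$ where I introduce the auxiliary parameter
\[
    t := \frac{|\chi|^2}{\varepsilon^2} > 0,
\]
so that $g_{\varepsilon,\chi}(z) = (tz+1)^{-1}$ and the right-hand side of the claim is exactly $C_\eta \max\{t,1\}^{-1}$. The fundamental observation is that for $z$ in the half-plane $\{\Re(z) \geq \eta\}$, the modulus of the denominator is controlled from below by its real part:
\[
    |tz + 1| \;\geq\; \Re(tz + 1) \;=\; t\,\Re(z) + 1 \;\geq\; t\eta + 1.
\]
This single inequality yields $|g_{\varepsilon,\chi}(z)| \leq (t\eta + 1)^{-1}$, and the main task is just to compare $(t\eta + 1)^{-1}$ with $\max\{t,1\}^{-1}$.

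Next I would split into two cases according to the size of $t$. When $t \geq 1$, I drop the $+1$ to obtain $(t\eta + 1)^{-1} \leq (t\eta)^{-1} = \eta^{-1}\cdot t^{-1}$, and since $\max\{t,1\}^{-1} = t^{-1}$ in this regime, the constant $\eta^{-1}$ suffices. When $t < 1$, the trivial bound $(t\eta + 1)^{-1} \leq 1 = \max\{t,1\}^{-1}$ does the job. Combining the two cases gives the claim with the explicit constant
\[
    C_\eta \;=\; \max\!\left\{1,\tfrac{1}{\eta}\right\}.
\]

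I do not anticipate any obstacle here: the estimate is purely algebraic once the half-plane hypothesis $\Re(z) \geq \eta$ is exploited, and no spectral or variational ingredients are needed. The only point worth flagging is that the bound deteriorates as $\eta \downarrow 0$, which is consistent with the fact that $g_{\varepsilon,\chi}$ has a pole at $z = -\varepsilon^2/|\chi|^2$ on the negative real axis; this is why the statement is formulated only on closed half-planes bounded away from the imaginary axis.
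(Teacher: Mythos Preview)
Your proof is correct: the substitution $t = |\chi|^2/\varepsilon^2$ reduces the claim to the elementary bound $|tz+1| \geq t\eta + 1$ on the half-plane, and your case split yields the explicit constant $C_\eta = \max\{1,\eta^{-1}\}$. The paper itself does not give an argument here but simply cites \cite[Lemma 6.5]{simplified_method}, so your self-contained proof is in fact more informative than what appears in the text.
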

\begin{proof}
    \cite[Lemma 6.5]{simplified_method}
\end{proof}

\paragraph*{The rescaled corrector operators.}

Second, we shall connect the $\tfrac{1}{|\chi|^2}$ rescaling in Section \ref{sect:fibrewise_asmptotics} back to $\tfrac{1}{\eps^2}$ by introducing the rescaled version of the the corrector operators. Recall the corrector operator $\mathcal{R}_{j,\chi}^{(k)}(z)$ (Definition \ref{defn:corrector_operators_chi}), the contour $\Gamma$ (and constant $\mu>0$, Lemma \ref{lem:contour_existence}), and the function $g_{\eps,\chi}$ \eqref{defn:function_g}.

\begin{definition}[Rescaled corrector operators]\label{defn:corrector_operators_chi_rescaled}
    For $i \in \N_0$, $j \in \{0,1,2\}$, and $\eps > 0$, the rescaled corrector operator $\mathcal{R}_{j,\chi,\eps}^{(k)} : L^2(Y;\C^3) \to L^2(Y;\C^3)$ is given by
    \begin{equation}\label{eqn:corrector_operators_chi_rescaled}
        \mathcal{R}_{j,\chi, \eps}^{(k)} := 
        \begin{cases}
            - \frac{1}{2\pi i} \oint_\Gamma g_{\eps,\chi}(z) \mathcal{R}_{j,\chi}^{(k)} (z) \, dz \qquad 
            &\text{if $\chi \in [-\mu,\mu]^3 \setminus \{ 0 \}$}, \\
            0 \qquad &\text{otherwise.}
        \end{cases}
    \end{equation}
\end{definition}

\paragraph*{The full-space corrector operators.}

Finally, we shall connect the fiber space $L^2(Y\times Y' ; \C^3)$ back to the full space $L^2(\R^3;\C^3)$ by introducing the full-space version of the corrector operators. Recall the rescaled corrector operators $\mathcal{R}_{j, \chi, \eps}^{(i)}$ (Definition \ref{defn:corrector_operators_chi_rescaled}) and the rescaled Gelfand transform \eqref{eqn:gelfand_inversion}.

\begin{definition}[Full-space corrector operator]\label{defn:corrector_operators_fullspace}
    For $i\in\{0,1,2\}$, $j\in \N_0$, $\eps>0$, the full-space corrector operators $\mathcal{R}_{j,\eps}^{(k)} : \mathcal{D}(\mathcal{R}_{j,\eps}^{(k)}) \to L^2(\R^3;\C^3)$ is given by
    \begin{equation}\label{eqn:corrector_operators_fullspace}
        \mathcal{R}_{j, \eps}^{(k)} = \mathcal{G}_\eps^\ast \left( \int_{Y'}^{\oplus} \mathcal{R}_{j, \chi, \eps}^{(k)} \, d\chi \right) \mathcal{G}_\eps, \qquad
        \mathcal{D}(\mathcal{R}_{j,\eps}^{(k)}) = \left\{ \vect f \in L^2(\R^3;\C^3) \,:\, (\mathcal{R}_{j,\chi,\eps}^{(k)} \mathcal{G}_\eps \vect f) (y,\chi) \in L^2(Y\times Y';\C^3) \right\}
    \end{equation}
\end{definition}

In the same way that the smoothness of $\vect f$ is necessary for the definition of higher-order terms in the classical two-scale expansion, the same is true here for the definition of $\mathcal{R}_{j, \eps}^{(k)}$, in the form of an integrability-in-$\chi$ condition in $\mathcal{D}(\mathcal{R}_{j,\eps}^{(k)})$. Note that integrability-in-$\chi$ concerns were absent in the previous section, as the asymptotic procedure was carried out for fixed $\chi$. In any case, returning to Definition \ref{defn:corrector_operators_fullspace}, we point out that the full-space correctors $\mathcal{R}_{j,\eps}^{(k)}$ are densely defined:

\begin{lemma}
    If $\vect f \in L^2(\R^3;\C^3)$ has compact Fourier support, then $\vect f$ and $\widehat{\Xi}_{\eps,\mu} \vect f$ lie in $\mathcal{D}(\mathcal{R}_{j,\eps}^{(i)})$.
\end{lemma}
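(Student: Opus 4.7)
The plan is to verify the integrability-in-$\chi$ condition defining $\mathcal{D}(\mathcal{R}_{j,\eps}^{(k)})$ by combining the compact $\chi$-support of $\mathcal{G}_\eps \vect f$ (which follows from the compactness of $\supp\mathcal{F}\vect f$) with a uniform operator-norm bound for the rescaled correctors $\mathcal{R}_{j,\chi,\eps}^{(k)}$ over that support.

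First I would invoke Proposition \ref{prop:compact_fourier_support_properties}: for $\eps$ small enough (depending on $K=\supp\mathcal{F}\vect f$ and $\mu$), $(\mathcal{G}_\eps \vect f)(\cdot,\chi)$ is constant in $y$ and vanishes off $c_{\text{supp}}[-\eps,\eps]^3 \subset [-\mu,\mu]^3$. Since $\mathcal{R}_{j,\chi,\eps}^{(k)}$ is defined to be zero for $\chi\notin[-\mu,\mu]^3$ by \eqref{eqn:corrector_operators_chi_rescaled}, the range of $\chi$ that matters is bounded. On this range, Proposition \ref{prop:fibrewise_terms_size} yields
\[
\bigl\|\mathcal{R}_{j,\chi}^{(k)}(z)\bigr\|_{L^2(Y)\to L^2(Y)} \leq \bigl\|\mathcal{R}_{j,\chi}^{(k)}(z)\bigr\|_{L^2(Y)\to H^1(Y)} \leq C^{k+1}\,\Phi_k(z)\,|\chi|^{j+k},
\]
where $\Phi_k(z)$ denotes the bracketed sum in \eqref{eqn:fibrewise_estimates1}--\eqref{eqn:fibrewise_estimates3}. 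For $z\in\Gamma$, Remark \ref{rmk:z_on_contour_nice_part2} bounds $\Phi_k(z)$ uniformly in $\chi$, while Lemma \ref{lem:function_g_bound} gives $|g_{\eps,\chi}(z)|\leq C_\eta$ uniformly in $\eps$ and $\chi$ (using $\Gamma\subset\{\Re z\geq \eta\}$ for some $\eta>0$). Since $\Gamma$ is a compact contour, the defining integral in \eqref{eqn:corrector_operators_chi_rescaled} then produces a finite constant $M=M(j,k)$ with $\|\mathcal{R}_{j,\chi,\eps}^{(k)}\|_{L^2(Y)\to L^2(Y)} \leq M|\chi|^{j+k}$ for all $\chi\in[-\mu,\mu]^3$.

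Combining these two ingredients and using the unitarity of $\mathcal{G}_\eps$,
\[
\bigl\|\mathcal{R}_{j,\chi,\eps}^{(k)}\mathcal{G}_\eps\vect f\bigr\|_{L^2(Y\times Y';\C^3)}^2
= \int_{c_{\text{supp}}[-\eps,\eps]^3}\bigl\|\mathcal{R}_{j,\chi,\eps}^{(k)}(\mathcal{G}_\eps\vect f)(\cdot,\chi)\bigr\|_{L^2(Y)}^2\,d\chi
\leq (M\mu^{j+k})^2 \|\vect f\|_{L^2(\R^3;\C^3)}^2 < \infty,
\]
so $\vect f\in\mathcal{D}(\mathcal{R}_{j,\eps}^{(k)})$. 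For $\widehat{\Xi}_{\eps,\mu}\vect f$, the defining formula \eqref{eqn:bloch_approx} gives $(\mathcal{G}_\eps\widehat{\Xi}_{\eps,\mu}\vect f)(\cdot,\chi) = \mathbf{1}_{[-\mu,\mu]^3}(\chi) P_\chi P_0 (\mathcal{G}_\eps\vect f)(\cdot,\chi)$, which inherits both the compact $\chi$-support and the pointwise-in-$\chi$ $L^2(Y)$-bound, since the orthogonal projections $P_\chi$ and $P_0$ are $L^2(Y)$-contractions; the same chain of inequalities carries over. I do not foresee a substantive obstacle: the content of the lemma is that the polynomial $|\chi|^{j+k}$ growth of the rescaled correctors from Proposition \ref{prop:fibrewise_terms_size} is trivially integrable against the compactly $\chi$-supported datum supplied by Proposition \ref{prop:compact_fourier_support_properties}.
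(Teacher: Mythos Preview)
Your proof is correct, and it takes a somewhat different (and in fact simpler) route from the paper's. The paper argues via smoothness: compact Fourier support forces $\vect f\in H^k$ for every $k$, hence $D^k\vect f\in L^2$, hence (via the Gelfand--gradient identity \eqref{eqn:gelfand_symgrad_formula}) $(iX_\chi)^{\otimes k}\mathcal{G}_\eps\vect f\in L^2(Y\times Y')$ for every $k$; this is then paired with the $\mathcal{O}(|\chi|^{j+k})$ operator-norm bound on $\mathcal{R}_{j,\chi,\eps}^{(k)}$. You bypass the smoothness detour entirely: you observe that the built-in cutoff in \eqref{eqn:corrector_operators_chi_rescaled} already confines the integration in $\chi$ to $[-\mu,\mu]^3$, and on that set the same $\mathcal{O}(|\chi|^{j+k})$ bound is simply a uniform constant, so integrability is immediate.

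A small remark: your invocation of Proposition~\ref{prop:compact_fourier_support_properties} (and the attendant restriction to $\eps<\eps_0$) is not actually needed, since, as you yourself note, the cutoff to $[-\mu,\mu]^3$ in the definition of $\mathcal{R}_{j,\chi,\eps}^{(k)}$ already supplies the bounded $\chi$-range for every $\eps>0$. In fact your argument shows more than the lemma claims: it shows $\mathcal{R}_{j,\eps}^{(k)}$ is a bounded operator on all of $L^2(\R^3;\C^3)$, so $\mathcal{D}(\mathcal{R}_{j,\eps}^{(k)})=L^2$ and the compact-Fourier-support hypothesis is superfluous for this particular lemma. The paper's route, by contrast, is modeled on the two-scale-expansion philosophy (smoothness of data controls higher-order correctors) and would survive if the cutoff in \eqref{eqn:corrector_operators_chi_rescaled} were removed, whereas yours exploits that cutoff directly.
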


\begin{proof}\label{lem:domain_of_corrector_operators_fullspace}
    By assumption, $\vect f \in H^k(\R^3;\C^3)$ for all $k$, so $D^k \vect f \in L^2(\R^3;\C^3)^{3k}$ for all $k$, and thus $\mathcal{G}_\eps D^k \vect f$ belongs to $L^2(Y\times Y';\C^3)^{3k}$. By \eqref{eqn:gelfand_symgrad_formula}, this implies that $(iX_\chi)^{\otimes k}(\mathcal{G}_\eps \vect f) \in L^2(Y\times Y';\C^3)^k$ for all $k$. Since $\mathcal{R}_{j,\chi,\eps}^{(k)}$, is an operator on $L^2(Y;\C^3)$ with norm of size $\mathcal{O}(|\chi|^{j+k})$, we conclude that $\mathcal{R}_{j,\chi,\eps}^{(k)}\mathcal{G}_\eps \vect f \in L^2(Y\times Y';\C^3)$. The same is true for $\widehat{\Xi}_{\eps,\mu} \vect f$, because
    \begin{equation*}
        \| \mathcal{R}_{j,\chi,\eps}^{(k)} P_\chi P_0 \mathcal{G}_\eps \vect f \|_{L^2(Y\times Y';\C^3)} 
        \leq \| \mathcal{R}_{j,\chi,\eps}^{(k)} \| \cancel{\| P_\chi \| \| P_0 \| \|} \mathcal{G}_\eps \vect f \|
        \leq C \| (iX_\chi)^{\otimes (j+k)} \mathcal{G}_\eps \vect f \|_{L^2(Y\times Y';\C^3)^{j+k}} < \infty. \qedhere
    \end{equation*}
\end{proof}

\subsection{\texorpdfstring{$L^2 \rightarrow L^2$ estimates}{L2 to L2 estimates}}\label{sect:proof_l2tol2}

Recall the constant $\mu(\nu, C_\text{Fourier})$ \eqref{eqn:constant_mu_smallfreq} and the Bloch approximation operator $\widehat{\Xi}_{\eps,\mu}$ (Definition \ref{defn:bloch_approximation}). 
We shall now state and prove our first main result.

\begin{theorem}\label{thm:l2tol2}
    %
    If $\vect f \in L^2(\R^3;\C^3)$ is such that its Fourier transform $\mathcal{F}(\vect f)$ is supported in a compact set $K$. Then there exist $\eps_0 = \eps_0(\mu, K)>0$ such that whenever $0<\eps<\eps_0$, the following estimate holds for each $n \in \N_0$:
    \begin{align}\label{eqn:l2tol2}
        \left\| 
        \left( \mathcal{A}_\eps + I \right)^{-1} \widehat{\Xi}_{\eps,\mu} \vect f - \sum_{k=0}^n \left( \mathcal{R}_{0,\eps}^{(k)} + \mathcal{R}_{1,\eps}^{(k)} + \mathcal{R}_{2,\eps}^{(k)} \right) \widehat{\Xi}_{\eps,\mu} \vect f
        \right\|_{L^2(\R^3;\C^3)}
        \leq C^{n+1} \eps^{n+1} \| \vect f \|_{L^2(\R^3;\C^3)},
    \end{align}
    for some constant $C = C(K, \nu, C_\text{Korn}, C_\text{Fourier}) > 0$.
\end{theorem}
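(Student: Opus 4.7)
The plan is to combine the functional calculus identity
\[
\bigl(\tfrac{1}{\eps^2}\mathcal{A}_\chi + I\bigr)^{-1}P_\chi = -\frac{1}{2\pi i}\oint_\Gamma g_{\eps,\chi}(z)\bigl(\tfrac{1}{|\chi|^2}\mathcal{A}_\chi - zI\bigr)^{-1} dz, \qquad \chi \in [-\mu,\mu]^3 \setminus \{0\},
\]
which follows from the Riesz--Dunford calculus applied to $g_{\eps,\chi}$ on the spectrum of $\tfrac{1}{|\chi|^2}\mathcal{A}_\chi$ enclosed by the anticlockwise contour $\Gamma$ of Lemma \ref{lem:contour_existence}, with the fibrewise expansion of Theorem \ref{thm:fibrewise_estimates}. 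By Proposition \ref{prop:pass_to_unitcell},
\[
(\mathcal{A}_\eps + I)^{-1}\widehat{\Xi}_{\eps,\mu}\vect f = \mathcal{G}_\eps^\ast \int_{Y'}^{\oplus} \bigl(\tfrac{1}{\eps^2}\mathcal{A}_\chi + I\bigr)^{-1} P_\chi P_0 \mathcal{G}_\eps\vect f \, d\chi,
\]
and by Proposition \ref{prop:compact_fourier_support_properties}(ii) the integrand vanishes outside $\chi \in c_\text{supp}[-\eps,\eps]^3 \subset [-\mu,\mu]^3$ once $\eps < \eps_0(\mu,K)$. Inserting the contour representation above and applying Fubini, I would then substitute Theorem \ref{thm:fibrewise_estimates} to split $(\tfrac{1}{|\chi|^2}\mathcal{A}_\chi - zI)^{-1} = \sum_{k=0}^n\sum_{j=0}^{2} \mathcal{R}_{j,\chi}^{(k)}(z) + E(z,\chi)$. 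By Definitions \ref{defn:corrector_operators_chi_rescaled} and \ref{defn:corrector_operators_fullspace}, integrating the sum against $-\tfrac{1}{2\pi i} g_{\eps,\chi}(z)\, dz$ along $\Gamma$ and then over $\chi$ yields precisely $\sum_{k=0}^n \bigl(\mathcal{R}_{0,\eps}^{(k)} + \mathcal{R}_{1,\eps}^{(k)} + \mathcal{R}_{2,\eps}^{(k)}\bigr)\widehat{\Xi}_{\eps,\mu}\vect f$, so the whole task reduces to controlling the remainder produced by $E(z,\chi)$.

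Using unitarity of $\mathcal{G}_\eps$, the $L^2(\R^3;\C^3)$ norm of the remainder equals
\[
\biggl(\int_{c_\text{supp}[-\eps,\eps]^3} \Bigl\| \frac{1}{2\pi i}\oint_\Gamma g_{\eps,\chi}(z)\, E(z,\chi)\,P_\chi P_0\,(\mathcal{G}_\eps\vect f)(\cdot,\chi) \, dz \Bigr\|_{L^2(Y;\C^3)}^2 \, d\chi \biggr)^{1/2}.
\]
Three bounds combine. First, Theorem \ref{thm:fibrewise_estimates} and Remark \ref{rmk:z_on_contour_nice_part2} give $\|E(z,\chi)P_\chi P_0(\mathcal{G}_\eps\vect f)(\cdot,\chi)\|_{L^2(Y;\C^3)} \leq C_\text{err}\widetilde{C}^{\,n+3}|\chi|^{n+1}\|P_\chi P_0(\mathcal{G}_\eps\vect f)(\cdot,\chi)\|_{L^2(Y;\C^3)}$ uniformly for $z \in \Gamma$. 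Second, for $\chi$ in the above support we have $|\chi|/\eps \leq c_\text{supp}\sqrt{3}$, so Lemma \ref{lem:function_g_bound} yields $|g_{\eps,\chi}(z)| \leq C_\eta$ uniformly on $\Gamma$. Third, in that same range $|\chi|^{n+1} \leq (c_\text{supp}\sqrt{3}\,\eps)^{n+1}$. Multiplying, using the finite length of $\Gamma$, and then taking the $L^2$-norm in $\chi$ together with $\|P_\chi\|,\|P_0\| \leq 1$ and unitarity of $\mathcal{G}_\eps$ delivers the claimed bound $C^{n+1}\eps^{n+1}\|\vect f\|_{L^2(\R^3;\C^3)}$.

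The main obstacle is to ensure that the accumulated constant is genuinely of the form $C^{n+1}$ rather than something growing factorially. This is handled by Remark \ref{rmk:z_on_contour_nice_part2}: the $n$-dependent factors $\sum_{k=0}^{n+1}\sum_{l=0}^{n+2}|z|^l/(D_\chi^{\text{hom}}(z))^k$ and $\max\{1,|z+1|/D_\chi(z)\}$ entering Theorem \ref{thm:fibrewise_estimates} are uniformly dominated on $\Gamma$ by $\widetilde{C}^{\,n+3}$, thanks to the buffer $\rho_0$ between $\Gamma$ and both spectra from Lemma \ref{lem:contour_existence}. The remaining constants depend only on $\nu$, $C_\text{Korn}$, $C_\text{Fourier}$, and on $K$ through $c_\text{supp}$, matching the asserted dependencies of $C$.
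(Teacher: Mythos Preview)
Your proposal is correct and follows essentially the same route as the paper: pass to the fibres via Proposition~\ref{prop:pass_to_unitcell}, restrict to $\chi \in c_{\text{supp}}[-\eps,\eps]^3$ using Proposition~\ref{prop:compact_fourier_support_properties}, invoke the Cauchy integral representation \eqref{eqn:cauchy_integral_achi} together with Definition~\ref{defn:corrector_operators_chi_rescaled}, and then apply Theorem~\ref{thm:fibrewise_estimates} with the uniform bound of Remark~\ref{rmk:z_on_contour_nice_part2}. The only cosmetic difference is that you bound $|g_{\eps,\chi}(z)| \leq C_\eta$ and $|\chi|^{n+1} \leq (c_{\text{supp}}\sqrt{3}\,\eps)^{n+1}$ directly, whereas the paper retains the sharper bound $|g_{\eps,\chi}(z)| \leq C_\eta(\max\{|\chi|^2/\eps^2,1\})^{-1}$ and splits into the cases $|\chi|\leq\eps$ and $|\chi|\geq\eps$ before using $|\chi|\leq c_{\text{supp}}\sqrt{3}\,\eps$; both reach the same $C^{n+1}\eps^{n+1}$ conclusion.
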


\begin{remark}
    Equivalently, we may reformulate Theorem \ref{thm:l2tol2} according to Assumption \ref{assump:rhs} as follows: If $\vect f_\eps = \widehat{\Xi}_{\eps,\mu}\vect f$ is the Bloch approximation of $\vect f \in L^2(\R^3;\C^3)$ where $\supp{(\mathcal{F}(\vect f))}$ is compact, then 
    \begin{align}
        \left\| 
        \left( \mathcal{A}_\eps + I \right)^{-1} \vect f_\eps - \sum_{k=0}^n \left( \mathcal{R}_{0,\eps}^{(k)} + \mathcal{R}_{1,\eps}^{(k)} + \mathcal{R}_{2,\eps}^{(k)} \right) \vect f_\eps
        \right\|_{L^2}
        \leq C^{n+1} \eps^{n+1} \| \vect f \|_{L^2},
    \end{align}
    for $\eps$ small enough, and some $C(K, \nu, C_\text{Korn}, C_\text{Fourier}) > 0$.
\end{remark}

\begin{proof}[Proof of Theorem \ref{thm:l2tol2}]
    Fix $\vect f \in L^2$ with $\supp{(\mathcal{F}(\vect f))} \subset K$, for some compact $K$. Then Proposition \ref{prop:compact_fourier_support_properties} applies with $\mu = \mu(\nu, C_\text{Fourier})$ as in \eqref{eqn:constant_mu_smallfreq}, to give the constants $c_\text{supp}(K)$, $\eps_0(\mu, K)>0$. In what follows, we shall assume that $\eps$ satisfies
    \begin{align}
        0 < \eps < \eps_0.
    \end{align}
    To keep the notation compact, we write $\mathcal{G}_\eps \widehat{\Xi}_{\eps,\mu} \vect f = \widetilde{\vect f}$ for the Gelfand transform of its Bloch approximation.
    
    We shall now proceed with a similar strategy to \cite[Theorem 6.6]{simplified_method}: We first collect our estimates on the fiber space (\textbf{Step 1}), then we pull back the estimates to the full space (\textbf{Step 2}).

    \textbf{Step 1: Estimates on $L^2(Y;\C^3)$.} For each $\chi \in Y'$, decompose the resolvent of $\frac{1}{\eps^2} \mathcal{A}_\chi$ as follows:
    \begin{align}
        \left( \frac{1}{\eps^2} \mathcal{A}_\chi + I \right)^{-1}
        = P_\chi \left( \frac{1}{\eps^2} \mathcal{A}_\chi + I \right)^{-1} P_\chi
        + (1-P_\chi) \left( \frac{1}{\eps^2} \mathcal{A}_\chi + I \right)^{-1} (1-P_\chi).
    \end{align}
    By assumption on $\vect f$, we have that
    \begin{align}\label{eqn:cauchy_integral_achieps}
        \left( \frac{1}{\eps^2} \mathcal{A}_\chi + I \right)^{-1} \widetilde{\vect f} = 
        \begin{cases}
            P_\chi \left( \frac{1}{\eps^2} \mathcal{A}_\chi + I \right)^{-1} P_\chi \widetilde{\vect f} \qquad
            &\text{for $\chi \stackrel{\text{Prop \ref{prop:compact_fourier_support_properties}}}{\in} c_\text{supp}[-\eps,\eps]^3 \stackrel{\text{Rmk \ref{rmk:rhs_alt_formula_blochapprox}}}{\subseteq} [-\mu,\mu]^3$,} \\
            0 &\text{otherwise.}
        \end{cases} 
    \end{align}
    Now fix $\chi \in [-\mu,\mu]^3 \setminus \{ 0 \}$. By the Cauchy integral formula with contour $\Gamma$ provided by Lemma \ref{lem:contour_existence},
    \begin{align}\label{eqn:cauchy_integral_achi}
        P_\chi \left( \frac{1}{\eps^2} \mathcal{A}_\chi + I \right)^{-1} P_\chi
        = g_{\eps,\chi}\left( \frac{1}{|\chi|^2} \mathcal{A}_\chi \right) P_{\Gamma, \frac{1}{|\chi|^2} \mathcal{A}_\chi} 
        = - \frac{1}{2\pi i} \oint_\Gamma g_{\eps,\chi}(z) \left( \frac{1}{|
        \chi|^2} \mathcal{A}_\chi - zI \right)^{-1} \, dz, 
    \end{align}
    where the notation $P_{\Gamma,\mathcal{A}}$ denotes the projection onto the eigenspace corresponding to the eigenvalues enclosed by the contour $\Gamma$. Recall also for such $\chi$, Definition \ref{defn:corrector_operators_chi_rescaled} gives
    \begin{align}\label{eqn:corrector_integral_chi}
        \mathcal{R}_{j,\chi,\eps}^{(k)}
        = - \frac{1}{2\pi i} \oint_\Gamma g_{\eps,\chi}(z) \mathcal{R}_{j,\chi}^{(k)} (z) \, dz.
    \end{align}
    
    Thus, combining \eqref{eqn:cauchy_integral_achieps}, \eqref{eqn:cauchy_integral_achi}, and \eqref{eqn:corrector_integral_chi}, we have
    %
    %
    %
    %
    \begin{align}
        &\left\| \left( \frac{1}{\eps^2} \mathcal{A}_\chi + I \right)^{-1} \widetilde{\vect f} - \sum_{k=0}^n \left( \mathcal{R}_{0,\chi,\eps}^{(k)} + \mathcal{R}_{1,\chi,\eps}^{(k)} + \mathcal{R}_{2,\chi,\eps}^{(k)} \right) \widetilde{\vect f} \right\|_{L^2(Y;\C^3)} \\
        &\qquad\leq \frac{1}{2\pi} \oint_\Gamma |g_{\eps,\chi}(z)| \left\| \left( \frac{1}{|
        \chi|^2} \mathcal{A}_\chi - zI \right)^{-1} \widetilde{\vect f} - \sum_{k=0}^n \left( \mathcal{R}_{0,\chi}^{(k)}(z) + \mathcal{R}_{1,\chi}^{(k)}(z) + \mathcal{R}_{2,\chi}^{(k)}(z) \right) \widetilde{\vect f} \right\|_{L^2(Y;\C^3)} \, dz. \\
        &\qquad\leq C_1 \oint_\Gamma \left( \max\left\{ \frac{|\chi|^2}{\eps^2},1 \right\}, 1 \right)^{-1} \bigg\| \left( \frac{1}{|
        \chi|^2} \mathcal{A}_\chi - zI \right)^{-1} \widetilde{\vect f} \nonumber\\
        &\qquad\qquad\qquad\qquad\qquad\qquad\qquad\qquad - \sum_{k=0}^n \left( \mathcal{R}_{0,\chi}^{(k)}(z) + \mathcal{R}_{1,\chi}^{(k)}(z) + \mathcal{R}_{2,\chi}^{(k)}(z) \right) \widetilde{\vect f} \bigg\|_{L^2(Y;\C^3)} \, dz. \label{eqn:apply_estimate_l2l2_chi}
    \end{align}
    In the final inequality \eqref{eqn:apply_estimate_l2l2_chi}, we have applied Lemma \ref{lem:function_g_bound}, where the constant $C_1$ depends only on $\nu$ and $C_\text{Fourier}$ (see Remark \ref{rmk:mu_rho0_dependencies}). Now combine this with Theorem \ref{thm:fibrewise_estimates} to obtain
    \begin{align}
        \eqref{eqn:apply_estimate_l2l2_chi}
        &\leq C_2^{n+1} \left( \max\left\{ \frac{|\chi|^2}{\eps^2},1 \right\}, 1 \right)^{-1} |\chi|^{n+1} \| \widetilde{\vect f} \|_{L^2(Y;\C^3)}, \label{eqn:apply_estimate_l2l2_chi_1}
    \end{align}
    where the constant $C_2$ depends only on $\nu$, $C_\text{Korn}$, and $C_\text{Fourier}$. It is important to note here that $z$ here is taken to be on the contour $\Gamma$ (see Remark \ref{rmk:z_on_contour_nice_part2}).

    Continuing from \eqref{eqn:apply_estimate_l2l2_chi_1},
    \begin{alignat}{3}
        \eqref{eqn:apply_estimate_l2l2_chi_1}
        &= C_2^{n+1} \| \widetilde{\vect f} \|_{L^2(Y;\C^3)} \times 
        \begin{cases}
            \frac{|\chi|^{n+1}}{1} \qquad\qquad\qquad
            &\text{if $\max\left\{ \tfrac{|\chi|^2}{\eps^2},1 \right\} = 1$, i.e.\, $|\chi| \leq \eps$.}\\
            \frac{|\chi|^{n+1}\eps^2}{|\chi|^2} \qquad\qquad\qquad
            &\text{if $\max\left\{ \tfrac{|\chi|^2}{\eps^2},1 \right\} = \tfrac{|\chi|^2}{\eps^2}$, i.e.\, $\eps \leq |\chi|$.}
        \end{cases} \\
        &= C_2^{n+1} \| \widetilde{\vect f} \|_{L^2(Y;\C^3)} \times
        \begin{cases}
            |\chi|^{n+1} \qquad\quad ~~
            &\text{if $|\chi| \leq \eps$.}\\
            \eps^2 |\chi|^{n-1} \qquad\quad ~~
            &\text{if $\eps \leq |\chi|$.}
        \end{cases} \label{eqn:apply_estimate_l2l2_chi_2}\\
        &\leq C_3^{n+1} \eps^{n+1} \| \widetilde{\vect f} \|_{L^2(Y;\C^3)}, \label{eqn:l2l2_estimate_step1}
    \end{alignat}
    where in the second case of \eqref{eqn:apply_estimate_l2l2_chi_2}, we recall from \eqref{eqn:cauchy_integral_achieps} that $\chi \in c_\text{supp}[-\eps,\eps]^3$, hence $|\chi|^2 \leq 3(c_\text{supp} \eps)^2$. Thus the constant $C_3$ depends on $\nu$, $C_\text{Korn}$, $C_\text{Fourier}$, and $K$.

    \textbf{Step 2: Back to estimates on $L^2(\R^3;\C^3)$.} To bring the estimate \eqref{eqn:l2l2_estimate_step1} back to the full space, we recall from Proposition \ref{prop:pass_to_unitcell} and Definition \ref{eqn:corrector_operators_fullspace} that
    \begin{align}
        \left( \mathcal{A}_\eps + I \right)^{-1} \widehat{\Xi}_{\eps,\mu} \vect f 
        &= \mathcal{G}_\eps^\ast \left( \int_{Y'}^\oplus \left( \frac{1}{\eps^2} \mathcal{A}_\chi + I \right)^{-1} \, d\chi \right) \mathcal{G}_\eps \widehat{\Xi}_{\eps,\mu} \vect f, \label{eqn:l2l2_proof_passing_to_unit_cell_identity1}\\
        \mathcal{R}_{j,\eps}^{(k)} \widehat{\Xi}_{\eps,\mu} \vect f
        &= \mathcal{G}_\eps^\ast \left( \int_{Y'}^\oplus \mathcal{R}_{j,\chi,\eps}^{(k)} \, d\chi \right) \mathcal{G}_\eps \widehat{\Xi}_{\eps,\mu} \vect f. \label{eqn:l2l2_proof_passing_to_unit_cell_identity2}
    \end{align}
    Note that $\widehat{\Xi}_{\eps,\mu} \vect f \in \mathcal{D}( \mathcal{R}_{j,\eps}^{(k)} )$, by Lemma \ref{lem:domain_of_corrector_operators_fullspace}. Since the Gelfand transform $\mathcal{G}_\eps$ is a unitary operator from $L^2(\R^3;\C^3)$ to $L^2(Y';L^2(Y;\C^3))$, and $\| \widehat{\Xi}_{\eps,\mu} \vect f \|_{L^2} \leq \| \vect f \|_{L^2}$ (as it involves $\mathcal{G}_\eps$ and projections $P_\chi$), \eqref{eqn:l2l2_estimate_step1} implies the estimate \eqref{eqn:l2tol2}, completing the proof.
\end{proof}


\begin{remark}\label{rmk:fewer_terms_l2l2}
    For readability, we have opted to use the expansion $\sum_{k=0}^n \left( \mathcal{R}_{0,\eps}^{(k)} + \mathcal{R}_{1,\eps}^{(k)} + \mathcal{R}_{2,\eps}^{(k)} \right)$ in \eqref{eqn:l2tol2}. However, it should be noted that some terms may be removed from the $(n-1)$-th and $n$-th cycle while maintaining the same $\mathcal{O}(\eps^{n+1})$ estimate in $L^2$. In particular, we may replace $\sum_{k=0}^n \left( \mathcal{R}_{0,\eps}^{(k)} + \mathcal{R}_{1,\eps}^{(k)} + \mathcal{R}_{2,\eps}^{(k)} \right)$ by
    \begin{alignat}{3}
        &\mathcal{R}_{0,\eps}^{(0)},
        &&\text{for an } \mathcal{O}(\eps) ~ \text{estimate.} \label{eqn:fewer_terms_1}\\
        &\left( \mathcal{R}_{0,\eps}^{(0)} + \mathcal{R}_{1,\eps}^{(0)} \right) + \mathcal{R}_{0,\eps}^{(1)},
        &&\text{for an } \mathcal{O}(\eps^2) ~ \text{estimate.} \label{eqn:fewer_terms_n_geq_2_a}\\
        &\sum_{k=0}^{n-2} \left( \mathcal{R}_{0,\eps}^{(k)} + \mathcal{R}_{1,\eps}^{(k)} + \mathcal{R}_{2,\eps}^{(k)} \right)
        + \left( \mathcal{R}_{0,\eps}^{(n-1)} + \mathcal{R}_{1,\eps}^{(n-1)} \right) 
        + \mathcal{R}_{0,\eps}^{(n)}, \quad 
        &&\text{for an } \mathcal{O}(\eps^{n+1}) ~ \text{estimate, $n\geq 2$.} \label{eqn:fewer_terms_n_geq_2}
    \end{alignat}
    Correspondingly, the proof of Theorem \ref{thm:l2tol2} may be modified using a triangle inequality argument as follows: For instance, when $n \geq 2$, we write 
    \begin{align}
        \eqref{eqn:fewer_terms_n_geq_2} = \sum_{k=0}^n \left( \mathcal{R}_{0,\eps}^{(k)} + \mathcal{R}_{1,\eps}^{(k)} + \mathcal{R}_{2,\eps}^{(k)} \right) 
        - \mathcal{R}_{2,\eps}^{(n-1)} 
        - \left( \mathcal{R}_{1,\eps}^{(n)} + \mathcal{R}_{2,\eps}^{(n)} \right).
    \end{align}
    Then, \eqref{eqn:apply_estimate_l2l2_chi} may be modified accordingly to
    \begin{align}
        &C \oint_\Gamma |g_{\eps,\chi}(z)| \left\| \left( \frac{1}{|\chi|^2} \mathcal{A}_\chi - zI \right)^{-1} \widetilde{\vect f} - \sum_{k=0}^n \left( \mathcal{R}_{0,\chi}^{(k)}(z) + \mathcal{R}_{1,\chi}^{(k)}(z) + \mathcal{R}_{2,\chi}^{(k)}(z) \right) \widetilde{\vect f} \right\|_{L^2(Y;\C^3)} dz \nonumber\\
        &\qquad + C \oint_\Gamma |g_{\eps,\chi}(z)| \left\| \left( \mathcal{R}_{2,\chi}^{(n-1)}(z) + \mathcal{R}_{1,\chi}^{(n)}(z) + \mathcal{R}_{2,\chi}^{(n)}(z) \right) \widetilde{\vect f} \right\|_{L^2(Y;\C^3)} dz.
    \end{align}
    By Proposition \ref{prop:fibrewise_terms_size}, $\| \mathcal{R}_{j,\chi}^{(k)}(z) \widetilde{\vect f} \|_{L^2} = \| \vect u_j^{(k)} \|_{L^2} = \mathcal{O}(|\chi|^{j+k})$, so the norm in the second term is of size $\mathcal{O}(|\chi|^{n+1})$. The rest of the proof proceeds as before.
\end{remark}

\subsection{\texorpdfstring{$L^2 \rightarrow H^1$ estimates}{L2 to H1 estimates}}\label{sect:proof_l2toh1}

Building on Theorem \ref{thm:l2tol2}, we shall now explain how a higher-order homogenization result may be obtained in the $H^1$ sense.

\begin{theorem}\label{thm:l2toh1}
    If $\vect f \in L^2(\R^3;\C^3)$ is such that its Fourier transform $\mathcal{F}(\vect f)$ is supported in a compact set $K$. Then there exist $\eps_0 = \eps_0(\mu, \vect f)>0$ such that whenever $0<\eps<\eps_0$, the following estimate hold for each $n \in \N_0$:
    \begin{align}\label{eqn:l2toh1}
        \left\| 
        \left( \mathcal{A}_\eps + I \right)^{-1} \widehat{\Xi}_{\eps,\mu} \vect f - \sum_{k=0}^n \left( \mathcal{R}_{0,\eps}^{(k)} + \mathcal{R}_{1,\eps}^{(k)} + \mathcal{R}_{2,\eps}^{(k)} \right) \widehat{\Xi}_{\eps,\mu} \vect f
        \right\|_{H^1(\R^3;\C^3)}
        \leq C^{n+1} \eps^{n} \| \vect f \|_{L^2(\R^3;\C^3)},
    \end{align}
    for some constant $C = C(K, \nu, C_\text{Korn}, C_\text{Fourier}) > 0$.
\end{theorem}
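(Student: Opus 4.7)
The plan is to mirror the proof of Theorem~\ref{thm:l2tol2} step-by-step, with the only genuine new ingredient being the identity~\eqref{eqn:gelfand_symgrad_formula}, which relates $\simgrad$ on $\R^3$ to the fiberwise operator $\simgrad_y + iX_\chi$ at the price of a $1/\eps$ factor. Writing $\vect w$ for the expression inside $\|\cdot\|_{H^1}$ in~\eqref{eqn:l2toh1}, I decompose
\[
\| \vect w \|_{H^1(\R^3;\C^3)}^2 \;\leq\; C \bigl( \| \vect w \|_{L^2(\R^3;\C^3)}^2 + \| \simgrad \vect w \|_{L^2(\R^3;\C^3)}^2 \bigr)
\]
using the Korn-type inequality on $\R^3$ (an elementary Fourier computation gives $\|\nabla \vect w\|_{L^2}^2 \leq 2\|\simgrad \vect w\|_{L^2}^2$). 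The first summand is already bounded by $C^{n+1}\eps^{n+1}\|\vect f\|_{L^2}$ via Theorem~\ref{thm:l2tol2}, which is more than enough, so the whole task is to control $\|\simgrad \vect w\|_{L^2(\R^3;\C^3)}$.

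For the gradient piece, \eqref{eqn:gelfand_symgrad_formula} gives
\[
\|\simgrad \vect w\|_{L^2(\R^3;\C^3)}^2 \;=\; \int_{Y'} \frac{1}{\eps^{2}}\, \bigl\| (\simgrad_y + iX_\chi)(\mathcal{G}_\eps \vect w)(\cdot,\chi) \bigr\|_{L^2(Y;\C^{3\times 3})}^2\, d\chi.
\]
As in Step~1 of the proof of Theorem~\ref{thm:l2tol2}, $\mathcal{G}_\eps \vect w$ vanishes outside $\chi \in c_{\text{supp}}[-\eps,\eps]^3$ by Proposition~\ref{prop:compact_fourier_support_properties}, and on this set it coincides with the Cauchy integral along $\Gamma$ (weighted by $g_{\eps,\chi}(z)$) of the $z$-dependent fiberwise difference treated in Theorem~\ref{thm:fibrewise_estimates}. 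Pulling $\simgrad_y + iX_\chi$ under the contour integral and combining the elementary bound $\|(\simgrad_y + iX_\chi)u\|_{L^2(Y)} \leq (1+|\chi|)\|u\|_{H^1(Y;\C^3)}$ with the fiberwise $H^1$ estimate of Theorem~\ref{thm:fibrewise_estimates} (uniform in $z\in\Gamma$ by Remark~\ref{rmk:z_on_contour_nice_part2}) and Lemma~\ref{lem:function_g_bound} yields the pointwise-in-$\chi$ estimate
\[
\bigl\|(\simgrad_y + iX_\chi)(\mathcal{G}_\eps \vect w)(\cdot,\chi)\bigr\|_{L^2(Y;\C^{3\times 3})} \;\leq\; C^{n+1} \bigl(\max\{|\chi|^2/\eps^2,\,1\}\bigr)^{-1}\, |\chi|^{n+1}\, \bigl\|(\mathcal{G}_\eps \widehat{\Xi}_{\eps,\mu}\vect f)(\cdot,\chi)\bigr\|_{L^2(Y;\C^3)}.
\]

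On the support $\chi \in c_{\text{supp}}[-\eps,\eps]^3$ we have $|\chi|^2/\eps^2 \leq 3 c_{\text{supp}}^2$, so the $\max$ factor is bounded above by an absolute constant depending only on $K$, and $|\chi|^{n+1} \leq C(K)\,\eps^{n+1}$. Substituting back, using that $\mathcal{G}_\eps$ is unitary and $\|\widehat{\Xi}_{\eps,\mu}\vect f\|_{L^2(\R^3)} \leq \|\vect f\|_{L^2(\R^3)}$, I obtain
\[
\|\simgrad \vect w\|_{L^2(\R^3;\C^3)}^2 \;\leq\; \frac{1}{\eps^{2}}\, C^{2(n+1)}\, \eps^{2(n+1)}\, \|\vect f\|_{L^2}^2 \;=\; C^{2(n+1)}\, \eps^{2n}\, \|\vect f\|_{L^2}^2,
\]
and the claimed $H^1$ bound follows after a square root. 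The main subtlety, and the reason the argument succeeds, is the $\eps$-bookkeeping: the single $1/\eps$ from \eqref{eqn:gelfand_symgrad_formula} is compensated precisely by one power of $|\chi|$ made available by $|\chi| \lesssim \eps$, which in turn rests entirely on the compact Fourier support hypothesis; without it, this compensation would fail and one would only recover $\eps^{n-1}$ in $H^1$. A triangle-inequality argument analogous to Remark~\ref{rmk:fewer_terms_l2l2} should also show that the highest-cycle correctors $\mathcal{R}_{2,\eps}^{(n)}$ can be dropped from the expansion without affecting the order of the error.
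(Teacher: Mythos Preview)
Your proof is correct and follows essentially the same route as the paper: quote Theorem~\ref{thm:l2tol2} for the $L^2$ part, pass to fibers via the Gelfand transform, invoke the Cauchy-integral representation and the fiberwise $H^1$ estimate of Theorem~\ref{thm:fibrewise_estimates} (with Remark~\ref{rmk:z_on_contour_nice_part2} to control the $z$-dependence on $\Gamma$), and absorb the single $1/\eps$ from~\eqref{eqn:gelfand_symgrad_formula} using $|\chi|\lesssim\eps$ on the support of $\widetilde{\vect f}$. The only organizational difference is that the paper works with the full gradient $\nabla$ throughout and, in pulling back to $\R^3$, splits $\nabla_y$ and $(\,\cdot\,)\otimes\chi$ by a triangle inequality (see Appendix~\ref{appendix:h1_lose_eps}), whereas you keep $(\simgrad_y+iX_\chi)$ as a single object and then invoke the $\R^3$ Korn inequality to recover $\nabla$; both are equivalent and neither buys anything the other does not.
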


\begin{proof}
    All the operators involved have their images contained in $H^1$ on their respective spaces. Thus it makes sense to measure the error in the $H^1$ norm. Fix $\vect f$ with $\supp{(\mathcal{F}\vect f)} \subset K$, for some compact $K$. As in the proof of Theorem \ref{thm:l2tol2}, we have constants $c_\text{supp}(K)$, $\eps_0(\mu,K)>0$, and we shall assume henceforth that $0<\eps<\eps_0$. Also, we shall write $\mathcal{G}_\eps \widehat{\Xi}_{\eps,\mu} \vect f = \widetilde{\vect f}$.
    
    Following \cite[Theorem 6.14]{simplified_method}, we shall split our discussion on the fiber space into two cases, namely, the $L^2$-norm and the $L^2$-norm of the gradients (\textbf{Step 1}), then we shall study how these two cases are pulled back to the full space (\textbf{Step 2}).

    \textbf{Step 1: Estimates on $L^2(Y;\C^3)$.} For the $L^2$-norm, recall from \eqref{eqn:l2l2_estimate_step1} that
    \begin{align}
        \left\| \left( \frac{1}{\eps^2} \mathcal{A}_\chi + I \right)^{-1} \widetilde{\vect f} - \sum_{k=0}^n \left( \mathcal{R}_{0,\chi,\eps}^{(k)} + \mathcal{R}_{1,\chi,\eps}^{(k)} + \mathcal{R}_{2,\chi,\eps}^{(k)} \right) \widetilde{\vect f} \right\|_{L^2(Y;\C^3)}
        \leq C^{n+1} \eps^{n+1} \| \widetilde{\vect f} \|_{L^2(Y;\C^3)}. \label{eqn:l2h1_proof_step1_l2norm}
    \end{align}
    We now turn to the $L^2$-norm of the gradients. Then,
    \begin{align}
        &\left\| 
        \nabla P_\chi \left( \frac{1}{\eps^2} \mathcal{A}_\chi + I \right)^{-1} P_\chi \widetilde{\vect f} 
        - \nabla \sum_{k=0}^n \left( \mathcal{R}_{0,\chi,\eps}^{(k)} + \mathcal{R}_{1,\chi,\eps}^{(k)} + \mathcal{R}_{2,\chi,\eps}^{(k)} \right) \widetilde{\vect f} 
        \right\|_{L^2(Y;\C^{3\times 3})} \\
        &\qquad = \frac{1}{2\pi} \left\| \nabla \oint_\Gamma g_{\eps,\chi}(z) \left[ \left( \frac{1}{|\chi|^2} \mathcal{A}_\chi - zI \right)^{-1} \widetilde{\vect f}
        - \sum_{k=0}^n \left( \mathcal{R}_{0,\chi}^{(k)} (z) + \mathcal{R}_{1,\chi}^{(k)} (z) + \mathcal{R}_{2,\chi}^{(k)} (z) \right) \widetilde{\vect f} \right] \right\|_{L^2} \, dz \\
        &\qquad\qquad\qquad\qquad \text{by \eqref{eqn:cauchy_integral_achi} and \eqref{eqn:corrector_integral_chi}} \nonumber\\
        &\qquad = \frac{1}{2\pi} \left\| \oint_\Gamma g_{\eps,\chi}(z) \left[ \nabla \left( \frac{1}{|\chi|^2} \mathcal{A}_\chi - zI \right)^{-1} \widetilde{\vect f} 
        - \nabla \sum_{k=0}^n \left( \mathcal{R}_{0,\chi}^{(k)} (z) + \mathcal{R}_{1,\chi}^{(k)} (z) + \mathcal{R}_{2,\chi}^{(k)} (z) \right) \widetilde{\vect f} \right] \right\|_{L^2} \, dz \\
        &\qquad\qquad\qquad\qquad \text{swap $\nabla$ with the integral (which is a finite sum).} \nonumber \\
        &\qquad\leq \frac{1}{2\pi} \oint_\Gamma |g_{\eps,\chi}(z)| \left\| \nabla \left( \frac{1}{|
        \chi|^2} \mathcal{A}_\chi - zI \right)^{-1} \widetilde{\vect f} - \nabla \sum_{k=0}^n \left( \mathcal{R}_{0,\chi}^{(k)}(z) + \mathcal{R}_{1,\chi}^{(k)}(z) + \mathcal{R}_{2,\chi}^{(k)}(z) \right) \widetilde{\vect f} \right\|_{L^2} \, dz \\
        &\qquad \leq C^{n+1} \left( \max\left\{ \frac{|\chi|^2}{\eps^2}, 1 \right\} \right)^{-1} |\chi|^{n+1} \| \widetilde{\vect f} \|_{L^2(Y;\C^3)} \\
        &\qquad\qquad\qquad\qquad \text{by Theorem \ref{thm:fibrewise_estimates} and Lemma \ref{lem:contour_existence}.} \nonumber \\
        &\qquad\leq C^{n+1} \eps^{n+1} \| \widetilde{\vect f} \|_{L^2}, \label{eqn:l2h1_proof_step1_l2normgrad}
    \end{align}
    where in the final step, we recall from Proposition \ref{eqn:fourier_inversion_compact_support} that $\widetilde{\vect f}$ is supported on $\chi \in c_\text{supp}[-\eps,\eps]^3$ (see also \eqref{eqn:l2l2_estimate_step1}). As in Theorem \ref{thm:l2tol2}, the constant $C$ depends on $\nu$, $C_\text{Korn}$, $C_\text{Fourier}$, and $K$.

    \textbf{Step 2: Back to estimates on $L^2(\R^3;\C^3)$.} For the $L^2$-norm, we have shown in \eqref{eqn:l2h1_proof_step1_l2norm} that the error is of order $\mathcal{O}(\eps^{n+1})$. As the Gelfand transform is unitary from $L^2(\R^3;\C^3)$ to $L^2(Y';L^2(Y;\C^3))$, the same $\mathcal{O}(\eps^{n+1})$ error holds on $L^2(\R^3;\C^3)$.

    Turning our attention to the $L^2$-norm of the gradients, we note that the $\mathcal{O}(\eps^{n+1})$ error on $L^2(Y;\C^3)$ becomes an $\mathcal{O}(\eps^{n})$ error on $L^2(\R^3;\C^3)$. That is,
    \begin{align}
        \bigg\| \nabla \left( \mathcal{A}_\eps + I \right)^{-1} \widehat{\Xi}_{\eps,\mu} \vect f - \nabla \sum_{k=0}^n \left( \mathcal{R}_{0,\eps}^{(k)} + \mathcal{R}_{1,\eps}^{(k)} + \mathcal{R}_{2,\eps}^{(k)} \right) \widehat{\Xi}_{\eps,\mu} \vect f \bigg\|_{L^2(\R^3;\C^{3\times 3})} \leq C^{n+1} \eps^n \| \vect f \|_{L^2(\R^3;\C^3)}.
    \end{align}
    The fact that one power of $\eps$ is lost is a consequence of the identity \eqref{eqn:gelfand_symgrad_formula}. Since the argument is exactly the same as in the proof of \cite[Theorem 6.14 (see Step 2, under ``$L^2$-norm of the gradient")]{simplified_method}, we omit the details. For the reader's convenience, we summarize the key points of this argument in Appendix \ref{appendix:h1_lose_eps}.

    \textbf{Conclusion.} We have shown that the $L^2$-norm contributes to an $\mathcal{O}(\eps^{n+1})$ estimate, and the $L^2$-norm of the gradients contributes to an $\mathcal{O}(\eps^{n})$ estimate. This concludes the proof.
\end{proof}

\begin{remark}\label{rmk:fewer_terms_l2h1}
    Once again, we note that some terms in the expansion $\sum_{k=0}^n \left( \mathcal{R}_{0,\eps}^{(k)} + \mathcal{R}_{1,\eps}^{(k)} + \mathcal{R}_{2,\eps}^{(k)} \right)$ in \eqref{eqn:l2toh1} may be removed from the $(n-1)$-th and $n$-th cycle while maintaining the $\mathcal{O}(\eps^n)$ estimate in the $H^1$ norm. In particular, we may take
    \begin{alignat}{3}
        &\left( \mathcal{R}_{0,\eps}^{(0)} + \mathcal{R}_{1,\eps}^{(0)} \right),
        &&\text{for an } \mathcal{O}(\eps^1) ~ \text{estimate.} \label{eqn:fewer_terms_h1_1}\\
        &\sum_{k=0}^{n-2} \left( \mathcal{R}_{0,\eps}^{(k)} + \mathcal{R}_{1,\eps}^{(k)} + \mathcal{R}_{2,\eps}^{(k)} \right)
        + \left( \mathcal{R}_{0,\eps}^{(n-1)} + \mathcal{R}_{1,\eps}^{(n-1)} \right) \qquad 
        &&\text{for an } \mathcal{O}(\eps^{n}) ~ \text{estimate, $n\geq 2$.}
    \end{alignat}
    That is, in the expansions \eqref{eqn:fewer_terms_n_geq_2_a}-\eqref{eqn:fewer_terms_n_geq_2}, we may further omit the final term $\mathcal{R}_{0,\eps}^{(n)}$. This is because $\mathcal{R}_{0,\chi}^{(n)}(z) \widetilde{\vect f} = \vect u_0^{(n)}$ is constant in $y$, and hence does not play a role in the discussion of the $L^2$-norm of gradients. See for instance, \cite[Theorem 6.14]{simplified_method}.

    Moreover, we note that the expansion \eqref{eqn:fewer_terms_h1_1} recovers the classical $L^2\rightarrow H^1$ homogenization result. Indeed, the $\mathcal{O}(\eps)$ error was shown as \cite[Theorem 6.14, 6.17]{simplified_method} under the operator asymptotic approach (and also using various other approaches, e.g. \cite{birman_suslina_2007_l2h1}). Moreover it was shown in \cite[Sect. 6.6]{simplified_method} that
    \begin{align}
        \mathcal{R}_{1,\eps}^{(0)} \vect f (x) = \eps \mathbb{N}\left( \frac{x}{\eps} \right) : \simgrad_x \vect u_0 (x), \qquad 
        \text{for a.e. $x \in \R^3$.}
    \end{align}
    where $\mathbb{N}(y) = \mathbf{N}_1 \vect e_1 + \cdots + \mathbf{N}_6 \vect e_6 \in H^1_\#(Y;\R^3) \otimes \R^{3\times 3}_{\text{sym}}$, $\{ \vect e_1,\cdots, \vect e_6 \}$ is an orthonormal basis for $\R_\text{sym}^{3\times 3}$, and $\mathbf{N}_i$ solves the usual cell-problem with $\vect e_i$. That is, the object $\mathcal{R}_{1,\eps}^{(0)}$ arising from the operator asymptotic approach coincides with the $O(\eps)$ oscillatory term of the classical two-scale expansion. 
\end{remark}

\section{Acknowledgements}
YSL is supported by NSF grants NSF DMS-2246031 and NSF DMS-2052572.
JZ is supported by the Croatian Science Foundation under the project number HRZZ-IP-2022-10-5181

\appendix
\renewcommand{\thesubsection}{\Alph{subsection}} 
\counterwithin{theorem}{section} 

\section{Some useful estimates}\label{appendix:useful_estimates}
In this appendix, we recall several useful estimates from \cite{simplified_method}.

\begin{proposition}
     There exists a constant $C_{\text{Korn}}>0$ such that we have the following estimate:
\begin{equation}
\label{korninequality33}
    \lVert \vect u \rVert_{H^1(Y;\C^3)} \leq C_{\text{Korn}} \left\lVert \simgrad \vect u \right\rVert_{L^2(Y;\C^{3\times 3})},
\end{equation}
for every $\vect u \in H^1_\#(Y;\C^3)$ satisfying $\int_Y \vect u = 0$.
\end{proposition}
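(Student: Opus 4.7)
The plan is to exploit the Fourier series representation on $Y = [0,1)^3$. Since $\vect u \in H^1_\#(Y;\C^3)$ with $\int_Y \vect u = 0$, I would expand $\vect u(y) = \sum_{k \in \Z^3\setminus\{0\}} \vect c_k e^{2\pi i k \cdot y}$ with Fourier coefficients $\vect c_k \in \C^3$ (the mean-zero constraint kills the $k=0$ mode). By Parseval's identity, the three relevant norms take the form
\begin{align*}
\|\vect u\|_{L^2}^2 &= \sum_{k\neq 0} |\vect c_k|^2, \\
\|\nabla \vect u\|_{L^2}^2 &= 4\pi^2 \sum_{k\neq 0} |\vect c_k|^2 |k|^2, \\
\|\simgrad \vect u\|_{L^2}^2 &= 4\pi^2 \sum_{k\neq 0} |\sym(\vect c_k \otimes k)|^2,
\end{align*}
where $\vect c_k \otimes k = \vect c_k k^\top$ in the paper's convention, since $k$ is real.

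The heart of the argument reduces to the pointwise rank-one estimate $|\vect c \otimes k|^2 \leq 2\, |\sym(\vect c \otimes k)|^2$ for all $\vect c \in \C^3$ and $k \in \R^3$. I would verify this by direct computation with the Hermitian Frobenius inner product, obtaining $|\sym(\vect c \otimes k)|^2 = \tfrac{1}{2}(|\vect c|^2|k|^2 + |\vect c \cdot k|^2)$, which is clearly at least $\tfrac{1}{2}|\vect c|^2|k|^2 = \tfrac{1}{2}|\vect c \otimes k|^2$. Multiplying by $4\pi^2$ and summing over $k \neq 0$ then yields $\|\nabla \vect u\|_{L^2}^2 \leq 2\, \|\simgrad \vect u\|_{L^2}^2$.

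To handle the $L^2$ part, I would invoke the Poincar\'e inequality for mean-zero periodic functions, which is immediate from the same Fourier representation: using $|k| \geq 1$ for $k \in \Z^3 \setminus \{0\}$, one gets $\|\vect u\|_{L^2}^2 \leq \tfrac{1}{4\pi^2}\|\nabla \vect u\|_{L^2}^2$. Combining,
\[
\|\vect u\|_{H^1}^2 = \|\vect u\|_{L^2}^2 + \|\nabla \vect u\|_{L^2}^2 \leq \Bigl(\tfrac{1}{2\pi^2} + 2\Bigr)\|\simgrad \vect u\|_{L^2}^2,
\]
so that $C_{\rm Korn} = \sqrt{\tfrac{1}{2\pi^2} + 2}$ works.

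I do not foresee any serious obstacle. The only mildly subtle step is the rank-one identity, which requires careful bookkeeping between the complex vector $\vect c$ and the real vector $k$ under the paper's conventions $\vect a \otimes \vect b = \vect a \overline{\vect b}^\top$ and $\sym \vect A = \tfrac{1}{2}(\vect A + \vect A^\top)$. Should the direct computation prove awkward, a less quantitative alternative would be a Peetre-type compactness argument: assume by contradiction $\vect u_n \in H^1_\#$ with $\int_Y \vect u_n = 0$, $\|\vect u_n\|_{H^1} = 1$, and $\|\simgrad \vect u_n\|_{L^2} \to 0$; extract an $L^2$-convergent subsequence via Rellich--Kondrachov; and observe that any limit satisfies $\simgrad \vect u = 0$ along with periodicity and mean-zero, forcing $\vect u \equiv 0$ and contradicting $\|\vect u_n\|_{H^1} = 1$.
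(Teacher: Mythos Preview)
Your Fourier-series argument is correct and yields an explicit constant. Note, however, that the paper does not actually prove this proposition: it is merely \emph{stated} in the appendix as one of several estimates recalled from \cite{simplified_method}, with no proof supplied. So there is no ``paper's own proof'' to compare against directly.

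That said, the key pointwise step in your argument --- the rank-one bound $|\vect c \otimes k|^2 \leq 2\,|\sym(\vect c \otimes k)|^2$ --- is essentially the content of the lemma that the paper states (and does prove) immediately after this proposition, namely $|\vect a \otimes \vect b| \leq 2\,|\sym(\vect a \otimes \vect b)|$. Your derivation via the identity $|\sym(\vect c \otimes k)|^2 = \tfrac{1}{2}(|\vect c|^2|k|^2 + |\sum_i c_i k_i|^2)$ is cleaner and gives the sharper constant $\sqrt{2}$ rather than $2$; the paper's proof of that lemma proceeds by a somewhat longer direct expansion. Combining your rank-one bound with the Fourier representation and the trivial Poincar\'e estimate $|k|\geq 1$ is exactly the natural route on the torus, and your computation of $C_{\rm Korn} = \sqrt{2 + \tfrac{1}{2\pi^2}}$ is correct.
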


\begin{lemma}\label{lem:symrk1}
    There exists $C>0$ such that for all $\vect a, \vect b \in \C^3$ we have:
    \begin{equation}
    \label{rankonesymformula}
         \lvert \vect a \otimes \vect b\rvert 
         \leq 2 \lvert \sym( \vect a \otimes \vect b )\rvert.
    \end{equation}
\end{lemma}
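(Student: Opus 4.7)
The plan is to reduce the claim to a direct Frobenius-norm computation on both sides. Writing $(\vect a \otimes \vect b)_{ij} = a_i \overline{b_j}$, one has $(\vect a \otimes \vect b)(\vect a \otimes \vect b)^\ast = |\vect b|^2\, \vect a \vect a^\ast$, so the trace identity gives
\begin{equation}
    |\vect a \otimes \vect b|^2 = \operatorname{tr}\bigl((\vect a \otimes \vect b)(\vect a \otimes \vect b)^\ast\bigr) = |\vect a|^2 |\vect b|^2.
\end{equation}
This is the first key step, and it is a one-line trace computation.

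Next, with $\sym(\vect a \otimes \vect b)_{ij} = \tfrac{1}{2}(a_i \overline{b_j} + a_j \overline{b_i})$, I would expand the Frobenius norm squared by brute force:
\begin{equation}
    |\sym(\vect a \otimes \vect b)|^2 = \tfrac{1}{4}\sum_{i,j}(a_i \overline{b_j} + a_j \overline{b_i})(\overline{a_i} b_j + \overline{a_j} b_i).
\end{equation}
Four terms appear after expanding. Two of them sum to $\tfrac{1}{2}|\vect a|^2|\vect b|^2$ after summing over $i,j$; the remaining two are complex conjugates of each other and each equals $\tfrac{1}{4}|\sum_i a_i b_i|^2 = \tfrac{1}{4}|\vect a \cdot \vect b|^2$ (where $\vect a \cdot \vect b = \sum_i a_i b_i$ denotes the bilinear, non-Hermitian pairing). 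Collecting,
\begin{equation}
    |\sym(\vect a \otimes \vect b)|^2 = \tfrac{1}{2}|\vect a|^2|\vect b|^2 + \tfrac{1}{2}|\vect a \cdot \vect b|^2.
\end{equation}

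The final step is then trivial: the desired inequality $|\vect a \otimes \vect b|^2 \leq 4|\sym(\vect a \otimes \vect b)|^2$ reads
\begin{equation}
    |\vect a|^2 |\vect b|^2 \leq 2|\vect a|^2|\vect b|^2 + 2|\vect a \cdot \vect b|^2,
\end{equation}
which holds manifestly, since both terms on the right are nonnegative. In fact one even obtains the sharper bound $|\vect a \otimes \vect b| \leq \sqrt{2}\, |\sym(\vect a \otimes \vect b)|$, with equality when $\vect a \cdot \vect b = 0$, so the constant $2$ in the statement is far from tight, but the weaker form suffices for the purposes of \eqref{eqn:Xchi_est}. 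There is no real obstacle here: the whole lemma is a short calculation, and the hardest part is only bookkeeping when expanding the four cross-terms.
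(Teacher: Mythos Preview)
Your proof is correct. Both you and the paper proceed by direct expansion of the Frobenius norm, so the approaches are essentially the same in spirit, but your execution is cleaner: you obtain the closed-form identity $|\sym(\vect a\otimes\vect b)|^2=\tfrac12|\vect a|^2|\vect b|^2+\tfrac12|\vect a\cdot\vect b|^2$ (with $\vect a\cdot\vect b=\sum_i a_ib_i$ the bilinear pairing), from which the estimate and the sharp constant $\sqrt 2$ are immediate, whereas the paper performs a more ad hoc regrouping of the sums into squares that it then drops. Your computation also tracks the complex conjugates consistently with the paper's convention $\vect a\otimes\vect b=\vect a\,\overline{\vect b}^\top$, while the paper's displayed calculation is written in real-vector notation; your version therefore closes that small notational gap as well.
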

\begin{proof}
   Let $\vect a = (a_1,a_2,a_3), \vect b=(b_1,b_2,b_3) \in \C^3$. The following calculation proves the claim: 
   \begin{align*}
       \lvert \sym(\vect a \otimes \vect b )\rvert^2 
       &= \sum_{i,j}\left( \frac{a_ib_j+a_jb_i}{2}\right)^2 = \sum_{i}a_i^2 b_i^2 + \sum_{i \neq j} \left( \frac{a_ib_j+a_jb_i}{2}\right)^2 \\ 
       &= \sum_i a_i^2b_i^2 + \frac{1}{2} \sum_{i \neq j}a_i^2 b_j^2 + \sum_{i \neq j}a_ib_ja_jb_i \\
       &= \frac{1}{2}\sum_{i} a_i^2b_i^2 + \frac{1}{4}\sum_{i \neq j}a_i^2 b_j^2 + \frac{1}{8} \sum_{i \neq j}(a_ib_j + a_jb_i)^2 + \frac{1}{8}\sum_{i \neq j}(a_ib_i + a_jb_j)^2 \\
       &\geq \frac{1}{2}\sum_{i} a_i^2b_i^2 + \frac{1}{4}\sum_{i \neq j}a_i^2 b_j^2 \geq \frac{1}{4} \sum_{i,j}a_i^2 b_j^2 = \frac{1}{4} \lvert \vect a \otimes \vect b\rvert^2. \qedhere
   \end{align*}
\end{proof}

\begin{proposition}\label{prop:coercive_est}
    There exists a constant $C_{\text{Fourier}}>0$ such that for every $\vect u \in H_\#^1(Y;\C^3)$, $\chi \in Y'\setminus\{0\}$ we have the following estimates: 
    \begin{equation}
        \left\lVert \vect u \right\rVert_{L^2(Y;\C^3)} \leq \frac{C_{\text{Fourier}}}{|\chi|}\left\lVert \left(\simgrad + iX_{\chi}\right) \vect u \right\rVert_{L^2(Y;\C^{3 \times 3})}.
    \end{equation}
    \begin{equation}
        \left\lVert \nabla \vect u \right\rVert_{L^2(Y;\C^{3 \times 3})} \leq C_{\text{Fourier}}\left\lVert \left(\simgrad + iX_{\chi}\right) \vect u \right\rVert_{L^2(Y;\C^{3 \times 3})}.
    \end{equation}
    \begin{equation}
        \left\lVert \vect u - \fint_Y \vect u\right\rVert_{L^2(Y;\C^3)} \leq C_{\text{Fourier}}\left\lVert \left(\simgrad + iX_{\chi}\right) \vect u \right\rVert_{L^2(Y;\C^{3 \times 3})}.
    \end{equation}
\end{proposition}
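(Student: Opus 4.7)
My plan is to prove all three estimates simultaneously by expanding $\vect u \in H^1_\#(Y;\C^3)$ in Fourier series. Write $\vect u(y) = \sum_{n \in \Z^3} \hat{\vect u}_n e^{2\pi i n \cdot y}$. Since $\simgrad$ applied to a single mode $\vect a e^{2\pi i n \cdot y}$ yields $2\pi i \sym(\vect a \otimes n) e^{2\pi i n \cdot y}$ and $i X_\chi$ applied to the same mode yields $i \sym(\vect a \otimes \chi) e^{2\pi i n \cdot y}$, the two contributions share the structure $i \sym(\vect a \otimes \vect w)$ with the same ``vector'' $\vect a = \hat{\vect u}_n$ but different ``vectors'' $\vect w \in \{2\pi n, \chi\}$. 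Consequently
\begin{equation*}
  (\simgrad + i X_\chi) \vect u \,=\, i \sum_{n \in \Z^3} \sym\bigl( \hat{\vect u}_n \otimes (2\pi n + \chi) \bigr) e^{2\pi i n \cdot y},
\end{equation*}
and Parseval's identity combined with Lemma \ref{lem:symrk1} then gives the crucial lower bound
\begin{equation*}
  \bigl\| (\simgrad + i X_\chi) \vect u \bigr\|_{L^2(Y;\C^{3\times 3})}^2 \,\geq\, \tfrac{1}{4} \sum_{n \in \Z^3} |\hat{\vect u}_n|^2 \, |2\pi n + \chi|^2.
\end{equation*}

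With this key inequality in hand, each of the three estimates reduces to a pointwise (in $n$) lower bound on $|2\pi n + \chi|^2$. For $n = 0$ we have $|2\pi n + \chi|^2 = |\chi|^2$, while for $n \neq 0$ the bound $|\chi| \leq \pi\sqrt{3}$ (from $\chi \in Y'$) together with the reverse triangle inequality gives
\begin{equation*}
  |2\pi n + \chi| \,\geq\, 2\pi|n| - |\chi| \,\geq\, 2\pi|n|\bigl(1 - \tfrac{\sqrt{3}}{2}\bigr),
\end{equation*}
so $|2\pi n + \chi|$ is comparable to $|n|$ uniformly in $\chi$, and in particular bounded below by an absolute positive constant.

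From here the three estimates follow by picking off the appropriate modes. The third estimate is the cleanest: restricting the sum to $n \neq 0$ (since $\vect u - \fint_Y \vect u$ has zero mean and hence no $n=0$ mode), we get $\| \vect u - \fint_Y \vect u \|_{L^2}^2 \leq \frac{16}{(2\pi - \pi\sqrt{3})^2} \|(\simgrad + i X_\chi) \vect u\|_{L^2}^2$. For the second estimate, $\|\nabla \vect u\|_{L^2}^2 = \sum_n 4\pi^2 |n|^2 |\hat{\vect u}_n|^2$ drops the $n=0$ term automatically, and the comparison $4\pi^2 |n|^2 \leq (1-\sqrt{3}/2)^{-2} |2\pi n + \chi|^2$ yields the claim. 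For the first estimate, split $\|\vect u\|_{L^2}^2 = |\hat{\vect u}_0|^2 + \sum_{n\neq 0} |\hat{\vect u}_n|^2$: the $n = 0$ contribution is bounded by $|\chi|^{-2} \cdot |\chi|^2 |\hat{\vect u}_0|^2 \leq 4|\chi|^{-2} \|(\simgrad + i X_\chi) \vect u\|_{L^2}^2$, and for $n\neq 0$ we use $1 \leq \frac{|2\pi n + \chi|^2}{(2\pi(1-\sqrt{3}/2))^2} \leq \frac{C}{|\chi|^2} |2\pi n + \chi|^2$ since $|\chi| \leq \pi\sqrt{3}$.

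The proof has no real obstacle once the Fourier setup is in place; the only delicate point is verifying that the lower bound on $|2\pi n + \chi|$ for $n \neq 0$ is uniform in $\chi \in Y' \setminus \{0\}$, which hinges on the fact that the dual cell $Y' = [-\pi, \pi)^3$ is strictly contained in the ball of radius $2\pi$ around any nonzero lattice point $2\pi n$. This is precisely why the problem is posed on the first Brillouin zone. The single absolute constant $C_{\text{Fourier}}$ may then be extracted as the maximum of the three constants produced above.
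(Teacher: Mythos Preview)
Your Fourier-series argument is correct and complete: the key lower bound $|\sym(\hat{\vect u}_n \otimes (2\pi n + \chi))|^2 \geq \tfrac{1}{4}|\hat{\vect u}_n|^2|2\pi n + \chi|^2$ follows from Lemma~\ref{lem:symrk1} exactly as you use it, and your uniform lower bound $|2\pi n + \chi| \geq 2\pi|n|(1-\sqrt{3}/2)$ for $n\neq 0$ is valid since $|\chi|\leq \pi\sqrt{3}$ on $Y'$. Note that the present paper does not actually supply a proof of this proposition---it is stated in Appendix~\ref{appendix:useful_estimates} as one of ``several useful estimates'' recalled from \cite{simplified_method}---but the very name $C_{\text{Fourier}}$ strongly suggests that the argument there is the same Fourier-mode computation you have given.
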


\begin{proposition}
    \label{prop:abstract_ineq}
    Assume that $\lambda \in \rho(\mathcal{A})$, $\mathcal{R}$ is a bounded linear functional on $(X, \langle \cdot, \cdot \rangle_X)$, and $\vect u \in X$ solves the problem
    \begin{equation}
    \label{abstractweakresolventproblem}
        a(\vect u, \vect v) - \lambda \langle \vect u,\vect v \rangle_H = \mathcal{R}(\vect v), \quad \forall \vect v \in X.
    \end{equation}
    Then the following inequality holds:
    \begin{equation}
    \label{eqn:absract_ineq}
        \lVert \vect u\rVert_X \leq 4 \max \left\{1, \frac{|\lambda + 1|}{\rm{dist}(\lambda, \sigma(\mathcal{A}))} \right\} \| \mathcal{R} \|_{X^*}.
    \end{equation}
\end{proposition}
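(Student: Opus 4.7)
The approach is to convert the variational equation \eqref{abstractweakresolventproblem} into a standard functional equation on $H$ via a spectral substitution built from $B := \mathcal{A} + I$, and then invoke the spectral theorem. In the setting of the paper, the form $a$ is non-negative, so $B$ is positive self-adjoint with $B \geq I$, form domain $X$, and $\|\vect w\|_X^2 = \|B^{1/2}\vect w\|_H^2 = a(\vect w,\vect w) + \|\vect w\|_H^2$ for all $\vect w \in X$. By functional calculus, $B^{-1/2}$ is then an isometric isomorphism from $H$ onto $X$.

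First I would rewrite \eqref{abstractweakresolventproblem} by adding $\langle \vect u,\vect v\rangle_H$ to both sides, absorbing the form on the left into the $X$-inner product to obtain
\begin{equation*}
    \langle \vect u,\vect v\rangle_X - (\lambda+1)\langle \vect u,\vect v\rangle_H = \mathcal{R}(\vect v), \qquad \forall \vect v \in X.
\end{equation*}
Setting $\vect u = B^{-1/2}\vect z$ for some $\vect z \in H$ (so $\|\vect u\|_X = \|\vect z\|_H$) and testing against $\vect v = B^{-1/2}\vect y$ for $\vect y \in H$, the identities $\langle B^{-1/2}\vect z, B^{-1/2}\vect y\rangle_X = \langle \vect z,\vect y\rangle_H$ and $\langle B^{-1/2}\vect z, B^{-1/2}\vect y\rangle_H = \langle B^{-1}\vect z,\vect y\rangle_H$ (both from functional calculus) turn the equation into
\begin{equation*}
    \bigl\langle \bigl(I - (\lambda+1)B^{-1}\bigr)\vect z,\, \vect y\bigr\rangle_H = \mathcal{R}(B^{-1/2}\vect y), \qquad \forall \vect y \in H.
\end{equation*}
Since $B^{-1/2}\colon H \to X$ is an isometry, the right-hand side is a bounded functional on $H$ of norm at most $\|\mathcal{R}\|_{X^*}$, so Riesz's representation theorem produces $\tilde{\vect g} \in H$ with $\|\tilde{\vect g}\|_H \leq \|\mathcal{R}\|_{X^*}$ satisfying $(I - (\lambda+1)B^{-1})\vect z = \tilde{\vect g}$ in $H$.

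Next I would invert this equation on $H$ via spectral calculus. Since $\sigma(B) = \sigma(\mathcal{A})+1 \subset [1,\infty)$ and $\lambda \in \rho(\mathcal{A})$, the operator $I - (\lambda+1)B^{-1}$ is invertible on $H$ with
\begin{equation*}
    \bigl\|\bigl(I - (\lambda+1)B^{-1}\bigr)^{-1}\bigr\|_{H\to H} = \sup_{\mu \in \sigma(\mathcal{A})} \frac{\mu+1}{|\mu-\lambda|}.
\end{equation*}
Writing $\mu+1 = (\mu-\lambda)+(\lambda+1)$ and bounding $|\mu-\lambda| \geq \mathrm{dist}(\lambda,\sigma(\mathcal{A}))$ gives the supremum $\leq 1 + \tfrac{|\lambda+1|}{\mathrm{dist}(\lambda,\sigma(\mathcal{A}))} \leq 2\max\bigl\{1,\tfrac{|\lambda+1|}{\mathrm{dist}(\lambda,\sigma(\mathcal{A}))}\bigr\}$. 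Combining with $\|\vect u\|_X = \|\vect z\|_H$ and $\|\tilde{\vect g}\|_H \leq \|\mathcal{R}\|_{X^*}$ yields \eqref{eqn:absract_ineq} with constant $2$; the factor $4$ in the statement is a loose universal constant.

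The main obstacle I anticipate is the functional-analytic bookkeeping of the substitution: verifying, via the spectral theorem for the unbounded self-adjoint operator $B$, that $B^{-1/2}$ is indeed an isometric isomorphism between $H$ and the form domain $X$, and justifying the two inner-product identities used above for general $\vect z,\vect y \in H$. Once these identities are in place, the rest of the proof reduces to the elementary scalar estimate on $(\mu+1)/|\mu-\lambda|$ over $\mu \in \sigma(\mathcal{A})$.
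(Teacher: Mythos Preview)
Your argument is correct. The paper itself does not supply a proof of this proposition: it is merely recalled in Appendix~A as one of several ``useful estimates'' carried over from \cite{simplified_method}, so there is nothing to compare against directly. Your spectral substitution via $B^{-1/2}$ is a clean and standard way to obtain the bound, and as you note it actually delivers the constant $2$ rather than $4$; the only points requiring care are the ones you flag yourself (that $B^{-1/2}\colon H\to X$ is a surjective isometry onto the form domain, and that the functional calculus for the bounded normal operator $I-(\lambda+1)B^{-1}$ applies even for complex $\lambda$), and both are routine consequences of the spectral theorem for the non-negative self-adjoint $B$.
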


\section{Pulling back \texorpdfstring{$H^1$}{H1} estimates to the full space} \label{appendix:h1_lose_eps}
In this appendix, we summarize the key points in the proof of \cite[Theorem 6.14, Step 2 of proof]{simplified_method}. Namely, the claim that the estimate from Step 1,
\begin{align}\label{eqn:l2h1_proof_step1_l2normgrad_appendix}
    &\bigg\|
        \bigg( \nabla_y \underbrace{\left( \frac{1}{\eps^2} \mathcal{A}_\chi + I \right)^{-1}}_{=\mathbb{X}} \mathcal{G}_\eps \widehat{\Xi}_{\eps,\mu} \vect f \bigg)(\cdot, \chi) 
        - \bigg( \nabla_y \sum_{k=0}^n \underbrace{\left( \mathcal{R}_{0,\chi,\eps}^{(k)} + \mathcal{R}_{1,\chi,\eps}^{(k)} + \mathcal{R}_{2,\chi,\eps}^{(k)} \right)}_{=\mathbb{Y}} \mathcal{G}_\eps \widehat{\Xi}_{\eps,\mu} \vect f  \bigg)(\cdot,\chi)
    \bigg\|_{L^2(Y;\C^{3\times 3})} \nonumber\\
    &\quad \leq C^n \eps^{n+1} \| \mathcal{G}_\eps \widehat{\Xi}_{\eps,\mu} \vect f (\cdot,\chi) \|_{L^2(Y;\C^3)},
\end{align} 
implies the estimate of gradients on the full space
\begin{align}\label{eqn:l2h1_proof_step2_l2normgrad_appendix}
    \bigg\| \nabla \underbrace{\left( \mathcal{A}_\eps + I \right)^{-1}}_{= \mathcal{X}} \widehat{\Xi}_{\eps,\mu} \vect f - \nabla \underbrace{\sum_{k=0}^n \left( \mathcal{R}_{0,\eps}^{(k)} + \mathcal{R}_{1,\eps}^{(k)} + \mathcal{R}_{2,\eps}^{(k)} \right)}_{= \mathcal{Y}} \widehat{\Xi}_{\eps,\mu} \vect f \bigg\|_{L^2(\R^3;\C^{3\times 3})} \leq C^{n+1} \eps^n \| \vect f \|_{L^2(\R^3;\C^3)}.
\end{align}

To begin, we use the identity \eqref{eqn:gelfand_symgrad_formula} to obtain
\begin{align}
    &\| \mathcal{G}_\eps [\nabla (\mathcal{X} - \mathcal{Y}) \widehat{\Xi}_{\eps,\mu} \vect f ] \|_{L^2(Y\times Y'; \C^{3\times 3})} \nonumber\\
    &\quad \leq \frac{1}{\eps} \left\| \nabla_y (\mathcal{G}_\eps [\mathcal{X} - \mathcal{Y}] \widehat{\Xi}_{\eps,\mu} \vect f) \right\|_{L^2(Y\times Y'; \C^{3\times 3})} + \frac{1}{\eps} \left\| (\mathcal{G}_\eps [\mathcal{X} - \mathcal{Y}] \widehat{\Xi}_{\eps,\mu} \vect f) \otimes \chi \right\|_{L^2(Y\times Y'; \C^{3\times 3})}.\label{eqn:l2h1_proof_gelfandscaling}
\end{align}

Consider the first term in the RHS of \eqref{eqn:l2h1_proof_gelfandscaling}. We have
\begin{align}
    &\frac{1}{\eps^2} \| \nabla_y \mathcal{G}_\eps [\mathcal{X} - \mathcal{Y}] \widehat{\Xi}_{\eps,\mu} \vect f \|_{L^2(Y\times Y'; \C^{3\times 3})}^2 
    = \frac{1}{\eps^2} \int_{Y'} \| \left( \nabla_y \mathcal{G}_\eps [\mathcal{X} - \mathcal{Y}] \widehat{\Xi}_{\eps,\mu} \vect f \right) (\cdot,\chi) \|_{L^2(Y;\C^{3\times 3})}^2 d\chi \nonumber\\
    &\leq \frac{1}{\eps^2} C^{2n+2} \eps^{2n+2} \int_{Y'} \| \mathcal{G}_\eps \widehat{\Xi}_{\eps,\mu} \vect f (\cdot, \chi) \|_{L^2(Y;\C^3)}^2 d\chi
    = C^{2n+2} \eps^{2n} \| \mathcal{G}_\eps \widehat{\Xi}_{\eps,\mu} \vect f \|_{L^2(Y\times Y';\C^3)}^2
    \leq C^{2n+2} \eps^{2n} \| \vect f \|_{L^2(\R^3;\C^3)}^2, \label{eqn:l2h1_step2_final1}
\end{align}
where the first inequality is an application of \eqref{eqn:l2h1_proof_step1_l2normgrad_appendix} and the identities \eqref{eqn:l2l2_proof_passing_to_unit_cell_identity1}-\eqref{eqn:l2l2_proof_passing_to_unit_cell_identity2}. The second inequality follows by noting that $\mathcal{G}_\eps$ is unitary and $\widehat{\Xi}_{\eps,\mu}$ involves projections.

Next, consider the second term in the RHS of \eqref{eqn:l2h1_proof_gelfandscaling}. We have
\begin{align}
    &\left\| (\mathcal{G}_\eps [\mathcal{X} - \mathcal{Y}] \widehat{\Xi}_{\eps,\mu} \vect f) \otimes \tfrac{\chi}{\eps} \right\|_{L^2(Y\times Y'; \C^{3\times 3})} \nonumber\\
    &\qquad\leq C \left\| \int_{Y'}^\oplus (\mathbb{X} - \mathbb{Y}) d\chi \otimes \tfrac{\chi}{\eps} \right\|_{L^2(Y\times Y';\C^3)\rightarrow L^2(Y\times Y';\C^{3\times 3})} \| \mathcal{G}_\eps \widehat{\Xi}_{\eps,\mu} \vect f \|_{L^2(Y\times Y';\C^3)} \nonumber\\
    &\qquad \leq C^{n+1} \eps^{n+1} \| \mathcal{G}_\eps \widehat{\Xi}_{\eps,\mu} \vect f \|_{L^2(Y\times Y';\C^3)}
    \leq  C^{n+1} \eps^{n+1} \| \vect f \|_{L^2(\R^3;\C^3)}, \label{eqn:l2h1_step2_final2}
\end{align}
where the first inequality is due to \eqref{eqn:l2l2_proof_passing_to_unit_cell_identity1} and \eqref{eqn:l2l2_proof_passing_to_unit_cell_identity2}, and the second due to  \eqref{eqn:l2h1_proof_step1_l2normgrad_appendix}. The conclusion follows by combining \eqref{eqn:l2h1_step2_final1} and \eqref{eqn:l2h1_step2_final2}.

\renewcommand{\bibname}{References} 
\printbibliography
\addcontentsline{toc}{section}{\refname} 

\end{document}